\title[Equality of the homogeneous and Gabor wave front set]{The equality of the homogeneous and the Gabor wave front set}
\author[R. Schulz]{Ren\'e Schulz}
\address{Leibniz Universit\"at Hannover, Institut f\"ur Analysis, Welfenplatz 1, D--30167 Hannover, Germany}
\email{rschulz@math.uni-hannover.de}
\author[P. Wahlberg]{Patrik Wahlberg}
\address{Department of Mathematics, Linn{\ae}us University, SE--351 95 V\"axj\"o, Sweden}
\email{patrik.wahlberg@lnu.se}
\keywords{}
\numberwithin{equation}{section}          
\newtheorem{thm}{Theorem}
\numberwithin{thm}{section}
\newcommand{\rubrik}{}
\newtheorem{prop}[thm]{Proposition}
\newtheorem{lem}[thm]{Lemma}
\newtheorem{mainlem}{Main Lemma}
\theoremstyle{definition}
\newtheorem{defn}[thm]{Definition}
\theoremstyle{remark}
\newcommand{\pd}[1] {\partial ^#1}
\newcommand{\pdd}[2] {\partial_{#1} ^{#2}}
\newcommand{\ro}{\mathbb R}
\newcommand{\no}{\mathbb N}
\newcommand{\rr}[1]{\mathbb R^{#1}}
\newcommand{\nn}[1]{\mathbb N^{#1}}
\newcommand{\zz}[1]{\mathbb Z^{#1}}
\newcommand{\co}{\mathbb C}
\newcommand{\cc}[1]{\mathbb C^{#1}}
\newcommand{\dd}{\mathrm {d}}
\newcommand{\HWF}{\mathrm{HWF}}
\newcommand{\ep}{\varepsilon}
\newcommand{\fy}{\varphi}
\newcommand{\supp}{\operatorname{supp}}
\newcommand{\wpr}{{\text{\footnotesize $\#$}}}
\newcommand{\eabs}[1]{\langle #1\rangle}
\newcommand{\charac}{\operatorname{char}}
\newcommand{\conesupp}{\operatorname{conesupp}}
\newcommand{\dbar}{{{{\ \mathchar'26\mkern-12mu \mathrm d}}}}
\newcommand{\WF}{\mathrm{WF}_G}
\newcommand{\cS}{\mathscr{S}}
\newcommand{\cF}{\mathscr{F}}
\newcommand{\cO}{\mathcal{O}}
\newcommand{\wt}{\widetilde}
\newcommand{\wh}{\widehat}
\newcommand{\im}{\rm Im \ }
\def\la{\langle}
\def\ra{\rangle}
\begin{document}

\begin{abstract}
We prove that H\"ormander's  global wave front set and Nakamura's homogeneous wave front set of a tempered distribution coincide. In addition we construct a tempered distribution with a given wave front set, and we develop a pseudodifferential calculus adapted to Nakamura's homogeneous wave front set.
\end{abstract}

\keywords{Microlocal analysis, homogeneous wave front set, Gabor wave front set, semiclassical 
analysis, Shubin calculus.
MSC 2010 codes: 35S05, 35A18, 35A22, 35A27, 42B37.}

\maketitle

\section{Introduction}

In this paper we prove the equality of the homogeneous wave front set, introduced by Nakamura \cite{Nakamura1},
and the global (Gabor) wave front set introduced by H\"ormander \cite{Hormander1}, of a tempered distribution on $\rr d$.
The homogeneous and the global wave front sets are both closed conical subsets of $T^*(\rr d) \setminus \{ 0 \}$ designed to encode global regularity.
The conical property here refers to the variables and covariables simultaneously, and not as is usual in microlocal analysis with respect to the covariables only, for fixed variables.
The global regularity means that the wave front set of $u \in \cS'(\rr d)$ is empty if and only if $u \in \cS(\rr d)$.
Thus the wave front set gives information on both smoothness and decay.

Such wave front sets arise in the study of propagation of singularities under partial differential equations.
The classical theory thereof, i.e. propagation of singularities on compact manifolds without boundary, or $\rr d$, is suitably covered by the classical theory of H\"ormander, see \cite{Hormander0}.

If in addition to lack of smoothness, growth singularities on unbounded spaces or compact spaces with boundaries are considered, the problem is not as well understood.
We give a brief exposition, not intended to be in any way a complete survey of the topic.

In one approach to address singularities at infinity, Melrose introduced the scattering wave front set, cf. \cite{Melrose1,Melrose2}.
For Euclidean spaces this coincides with the $\cS$ wave front set in \cite{Coriasco2} (cf. also \cite{Cordes}).
This theory was used to study tempered distributions and was shown to be adapted to the $SG$-calculus of pseudodifferential operators.
The notion extends the classical wave front set by components at infinity, defining a wave front set contained in $\rr d\times S_{d-1} \cup S_{d-1}\times S_{d-1} \cup S_{d-1}\times \rr d$ as a closed subspace, such that the classical wave front set is the first component.
These notions have been complemented by \cite{Wunsch1}, where the quadratic scattering wave front set was similarly introduced to study propagation of singularities, partly arising via quadratic oscillations, in Schr\"odinger-type equations on scattering manifolds.
We note that subsequently a family of global wave front sets has been introduced in \cite{Melrose3}.

On a different note Nakamura \cite{Nakamura1} introduced the homogeneous wave front set in order to study propagation of singularities under Schr\"odinger operators via methods typically used in semiclassical analysis.
In particular the symbols are compactly supported, smooth and dilated by a small parameter.
The homogeneous wave front set encodes global regularity and is quite different from the classical wave front set. However, there are some inclusion results (see \cite{Nakamura1}) and
Ito \cite{Ito1} has shown that the homogeneous wave front set is closely related to the quadratic scattering wave front set. This connection was used to rededuce some of the results in \cite{Wunsch1} with a different approach.

There exist versions of the homogeneous wave front set adapted to analytic \cite{Martinez2} and Gevrey \cite{Mizuhara1} regularity,
defined in terms of the short-time Fourier transform and decay estimates.
They are used to obtain similar results to those in \cite{Nakamura1} in their respective functional setting.
These techniques may be applied to the temperate case as well and, by the results of this paper, they can be understood to yield the same objects as
in \cite{Nakamura1}.

There exists another global wave front set suitable for the analysis of tempered distributions on $\rr d$, introduced by H\"ormander \cite{Hormander1}, which can be employed to study propagation of singularities under quadratic hyperbolic operators. It is defined by means of the Weyl calculus with Shubin symbols.
This wave front set has been shown to be characterizable in terms of the short-time Fourier transform and by means of Gabor frames, see \cite{Hormander1,Rodino1}. We use the nomenclature of \cite{Rodino1} and call this notion the Gabor wave front set.

This illustrates that quite a few notions of wave front sets encoding global regularity exist, however all of them are somehow connected.
Our contribution consists of a proof that two of them are equal, the Gabor wave front set introduced by H\"ormander and the homogeneous wave front set of Nakamura.

As a tool for the deduction of our main result, we develop a calculus of Shubin symbols that are dilated in the phase space variables by a small parameter (see Appendix \ref{appendix}). This calculus is inspired by semiclassical pseudodifferential calculi (cf. \cite{Martinez1,Zworski1}) where the dilation occurs only in the covariables, not in the variables.
We call this calculus a global semiclassical calculus, where the connection to semiclassical analysis is the one just described, and the term global is used to indicate that the framework is tempered distributions and Shubin symbols.

It is interesting to note that there are close connections between semiclassical notions of wave front sets (cf. \cite{Martinez1,Zworski1}) and H\"ormander's classical wave front set.
Both can be characterized in many ways, in particular via pseudodifferential operators and integral transforms.
Our main result may be interpreted as a version of the connection between classical and semiclassical wave front sets for wave front sets encoding global regularity.

The paper is organized as follows: in Section \ref{sec:prelim} we introduce our notation and recall the notions needed from pseudodifferential calculus and harmonic analysis. Section \ref{sec:wfintro} is devoted to the definition and the basic properties of the Gabor and the homogeneous wave front sets, respectively. We also introduce a parameter-dependent version of the short-time Fourier transform, which is used to relate the two wave front sets.
In Section \ref{sec:prep} we prepare for the proof of our main result. We prove estimates for pseudodifferential operators with dilated symbols and assigned support properties acting on a time-frequency translated Gaussian.

In Section \ref{sec:main} we use the preparations to show our main result, the equality of the homogeneous and the Gabor wave front sets.
In Section \ref{sec:exist}, as an excursus and complement to the theory, we construct explicitly, for any given closed conic subset $\Gamma \subseteq T^*(\rr d) \setminus \{0\}$, a tempered distribution with Gabor wave front set equal to $\Gamma$.
In Appendix \ref{appendix} we give an account of a pseudodifferential calculus adapted to symbols dilated in both variables and covariables simultaneously.
This calculus is a tool for the study of the homogeneous wave front set. In particular we show an invariance property of the homogeneous wave front set, which is needed in the proof of our main result, and we hope that this calculus may be useful in other situations.

\section{Preliminaries}
\label{sec:prelim}

An open ball of radius $r>0$ and center $(x_0,\xi_0) \in \rr {2d}$ is denoted $B_r(x_0,\xi_0)$ and $B_r = B_r(0)$.
The unit sphere in $\rr d$ is denoted $S_{d-1}$.
Positive constants will be denoted by $C$, sometimes with appropriate subscripts, and these constants may change value over inequalities.
We write $f (x) \lesssim g (x)$ provided there exists $C>0$ such that $f (x) \leq C g(x)$ for all $x$ in the domain of $f$ and $g$.
If $f(h) = \cO(h^N)$ for $h \in (0,1]$, that is,
$$
\sup_{h \in (0,1]} h^{-N} |f(h)| \lesssim 1,
$$
for all non-negative integers $N$, then we write $f(h) = \cO(h^\infty)$, $h \in (0,1]$.

The Fourier transform of $f \in \mathscr S(\rr d)$ (the Schwartz space) is normalized as
$$
\mathscr{F} f(\xi) = \wh f(\xi) = \int_{\rr d} f(x) e^{- i x \cdot \xi} \dd x, \quad \xi \in \rr d,
$$
where $x \cdot \xi$ denotes the inner product on $\rr d$.
The Japanese bracket is defined by $\la x \ra = \sqrt{1+|x|^2}$, $x \in \rr d$.
We use the family of seminorms on $\cS(\rr d)$ defined by
\begin{equation}\label{seminorm1}
\rho_{m,k}(g) = \sum_{|\alpha| \leq m} \sup_{y \in \rr d} \eabs{y}^k \left| \pd \alpha g(y) \right|, \quad m,k \in \no.
\end{equation}
The symbol $(\cdot,\cdot)$ is used for the $L^2$-inner product as well as the conjugate linear action of $\cS'(\rr d)$ on $\cS(\rr d)$.
The space $C_b^\infty(\rr d)$ is the space of smooth functions $f$ such that $\pd \alpha f \in L^\infty$ for all $\alpha \in \nn d$.
We use $D_{x_j}=-i \partial/\partial x_j$ and its multi-index extension.

In the following we recall some notions of time-frequency analysis and pseudodifferential operators, cf. e.g. \cite{Folland1,Grigis1,Grochenig1,Hormander0,Martinez1,Nicola1,Shubin1,Zworski1}. 
Let $u \in \cS'(\rr d)$ and $\varphi \in \cS(\rr d) \setminus \{ 0 \}$. The \emph{short-time Fourier (or Gabor) transform} (STFT)  $V_\varphi u $ of $u$ with respect to the window function $\varphi$ is defined as
$$
V_\varphi u :\ \rr {2d} \rightarrow \co, \quad (x,\xi) \mapsto V_\varphi u(x,\xi) = (u, M_\xi T_x \varphi ),
$$
where $T_x$ denotes the translation operator $T_x\varphi(y)=\varphi(y-x)$ and $M_\xi$ the modulation operator $M_\xi \varphi(y)=e^{i y \cdot \xi} \varphi(y)$.
The joint phase space translation operator is denoted $\Pi(z)=M_\xi T_x$, $z=(x,\xi) \in \rr {2d}$.
We have $V_\fy u \in C^\infty(\rr {2d})$ and there exists $N \in \no$ such that $$
|V_\fy u(z)| \lesssim \eabs{z}^N, \quad z \in \rr {2d}.
$$

Let $\psi \in \cS(\rr d)$ satisfy $\| \psi \|_{L^2}=1$.
The Moyal identity
\begin{equation*}
(u,g)= (2\pi)^{-d} \int_{\rr {2d}} V_\psi u(z) \, \overline{V_\psi g(z)} \, \dd z, \quad g \in \cS(\rr d), \quad u \in \cS'(\rr d),
\end{equation*}
is sometimes written
\begin{equation*}
u = (2\pi)^{-d} \int_{\rr {2d}} V_\psi u(x,\xi) \, M_\xi T_x \psi \, \dd x \, \dd \xi, \quad u \in \cS'(\rr d),
\end{equation*}
with action understood to take place under the integral. In this form it is a left inversion formula for the STFT.

Let $a \in C^\infty (\rr {2d})$. Then $a$ is a \emph{Shubin symbol} of order $m \in \ro$, denoted $a\in G^m$, if for all $\alpha,\beta \in \nn d$ there exists a constant $C_{\alpha\beta}>0$ such that
\begin{equation}\label{eq:shubinineq}
	|\partial_x^\alpha \partial_\xi^\beta a(x,\xi)|\leq C_{\alpha\beta}\langle (x,\xi)\rangle^{m-|\alpha|-|\beta|}, \quad x,\xi \in \rr d.
\end{equation}
The Shubin symbols form a Fr\'echet space where the seminorms are given by the smallest possible constants in \eqref{eq:shubinineq}.

For $a \in G^m$ and $t \in \ro$, a pseudodifferential operator in the $t$-quantization is defined by
\begin{equation*}
a^t(x,D_x) u(x)
= \int_{\rr {2d}} e^{i(x-y) \cdot \xi} a\big(tx+(1-t)y,\xi\big) \, u(y) \, \dd y \, \dbar \xi, \quad u \in \cS(\rr d),
\end{equation*}
when $m<-d$, where we use the convention $\dbar \xi=(2\pi)^{-d} \dd \xi$. The definition extends to general $m \in \ro$ by means of the following regularization procedure.
For $\psi \in C_c^\infty(\rr d)$ which equals one in a neighborhood of the origin one defines for $u \in \cS(\rr d)$
\begin{equation}\label{regularization0}
a^t(x,D_x) u(x)
= \lim_{\ep \rightarrow +0} \int_{\rr {2d}} \psi(\ep\xi) e^{i(x-y) \cdot \xi} a\big(tx+(1-t)y,\xi\big) \, u(y) \, \dd y \, \dbar \xi,
\end{equation}
which after integration by parts gives an absolutely convergent integral (cf. \cite{Nicola1,Shubin1}).

The operator $a^t(x,D_x)$ acts continuously on $\cS(\rr d)$ and extends by duality uniquely to a continuous operator on $\cS'(\rr d)$.
The $t$-quantization for general $t \in \ro$ contains as special cases the Kohn--Nirenberg quantization ($t=1$) denoted $a^1(x,D)= a(x,D)$,
and the Weyl quantization ($t=1/2$), denoted $a^{1/2}(x,D)=a^w(x,D)$, which we will use most of the time.
We will often dilate symbols with a positive parameter $h$, which is denoted $a_h(z)=a(hz)$, $z \in \rr {2d}$.

The Weyl product is the symbol product corresponding to composition of operators, $(a \wpr b)^w(x,D) = a^w(x,D) \, b^w (x,D)$.
The Weyl calculus enjoys the property $a^w(x,D)^* = \overline{a}^w(x,D)$, where $a^w(x,D)^*$ denotes the formal adjoint.
The Wigner distribution is defined by
\begin{align*}
W(f,g) (x,\xi) & = \int_{\rr d} f(x+y/2) \, \overline{g(x-y/2)} \, e^{-i y \cdot \xi} \, \dd y, \\
& \qquad \qquad x,\xi \in \rr d, \quad f,g \in \cS(\rr d),
\end{align*}
and we write $W(f)=W(f,f)$. It appears in the Weyl calculus in the formula
\begin{equation}\label{weylwigner}
(a^w(x,D) \, f, g) = (2 \pi)^{-d} (a,W(g,f) ), \quad a \in \cS'(\rr {2d}), \quad f,g \in \cS(\rr d).
\end{equation}

\section{Two notions of global microlocal singularities}

\label{sec:wfintro}

\subsection{The Gabor wave front set}

\begin{defn}
\label{def:wfgl}
Let $u \in \cS'(\rr d)$. The \emph{Gabor wave front set} $\WF(u)$ is defined by its complement: a point $(x,\xi)\in \rr {2d} \setminus \{0\}$ is not in $\WF(u)$ if one of the following equivalent conditions is met (cf. \cite[Proposition~6.8]{Hormander1} and \cite[Proposition~2.7, Corollary~3.3, Corollary~4.3]{Rodino1}):

\begin{itemize}
	\item There exists $a\in G^0$ such that $a^w(x,D) u \in \cS (\rr d)$ and $a$ is non-characteristic at $(x,\xi)$,
which means that $|a(sx,s\xi)|$ is lower bounded by a positive number for $s>0$ sufficiently large.
	\item For one (equivalently all) $\varphi \in \cS (\rr d)\setminus\{0\}$ there exists an open cone $\Gamma\subseteq \rr {2d} \setminus \{ 0 \}$ containing $(x,\xi)$ such that
$$
\sup_{z \in \Gamma} \, \eabs{z}^N |V_\varphi u(z)| \lesssim 1, \quad N \geq 0.
$$
	\item For one (equivalently all) choice of $\varphi \in \cS (\rr d)\setminus\{0\}$ and lattice $\Lambda=\alpha \zz d \times \beta \zz d \subseteq \rr {2d}$ where $\alpha,\beta>0$, such that $\{\Pi(\lambda) \fy \}_{\lambda \in \Lambda}$ constitutes a Gabor frame for $L^2(\rr d)$ (cf. \cite{Grochenig1}), there exists an open cone $\Gamma\subseteq \rr {2d} \setminus \{ 0 \}$ containing $(x,\xi)$ such that
$$
\sup_{\lambda \in \Gamma \cap \Lambda} \eabs{\lambda}^N |V_\varphi u(\lambda)| \lesssim 1, \quad N \geq 0.
$$
\end{itemize}
\end{defn}

Thus $\WF(u)$ is a closed conic set in $\rr {2d} \setminus \{ 0 \}$.
We list some of the main results for the Gabor wave front set (cf. \cite{Schulz1}). 

\begin{prop}
\cite[Proposition~2.4]{Hormander1}
Let $u \in \cS'(\rr d)$. Then
$$
\WF(u) = \emptyset \quad \Longleftrightarrow \quad u \in \cS (\rr d).
$$
\end{prop}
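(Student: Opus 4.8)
The plan is to work throughout with the short-time Fourier transform characterization in the second bullet of Definition~\ref{def:wfgl}, fixing once and for all a window $\psi \in \cS(\rr d)$ with $\| \psi \|_{L^2} = 1$. For the implication $u \in \cS(\rr d) \Rightarrow \WF(u) = \emptyset$, I would use the standard fact that the STFT restricts to a continuous bilinear map $\cS(\rr d) \times \cS(\rr d) \to \cS(\rr {2d})$ (write $V_\psi u(x,\xi) = \cF\big( u \, \overline{T_x \psi} \big)(\xi)$), so that $V_\psi u \in \cS(\rr {2d})$ and in particular $\sup_{z \in \rr {2d}} \eabs{z}^N |V_\psi u(z)| < \infty$ for every $N \geq 0$. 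Then the single open cone $\Gamma = \rr {2d} \setminus \{ 0 \}$ witnesses that no point $(x,\xi) \neq 0$ lies in $\WF(u)$.

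For the converse, assume $\WF(u) = \emptyset$. Then every direction $\omega \in S_{2d-1}$ admits an open cone $\Gamma_\omega \ni \omega$ with $\sup_{z \in \Gamma_\omega} \eabs{z}^N |V_\psi u(z)| \lesssim 1$ for all $N \geq 0$. By compactness of $S_{2d-1}$ finitely many such cones $\Gamma_{\omega_1}, \dots, \Gamma_{\omega_M}$ cover $S_{2d-1}$, and since each is a cone their union is all of $\rr {2d} \setminus \{ 0 \}$; hence for each $N$ one gets $\sup_{z \neq 0} \eabs{z}^N |V_\psi u(z)| \leq \max_{1 \le i \le M} \sup_{z \in \Gamma_{\omega_i}} \eabs{z}^N |V_\psi u(z)| < \infty$. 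Together with continuity of $V_\psi u$ near the origin this shows $V_\psi u$ is rapidly decreasing on $\rr {2d}$. Compactness of the sphere is essential here: without it the implied constants over infinitely many cones could fail to be uniformly bounded.

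Finally I would feed this into the inversion formula $u = (2\pi)^{-d} \int_{\rr {2d}} V_\psi u(z) \, \Pi(z) \psi \, \dd z$ recorded in Section~\ref{sec:prelim}. Differentiating under the integral,
\[
x^\alpha D^\beta u(x) = (2\pi)^{-d} \int_{\rr {2d}} V_\psi u(z) \; x^\alpha D^\beta\!\big( \Pi(z) \psi \big)(x) \, \dd z ,
\]
and a Leibniz expansion of $D^\beta\big( e^{i x \cdot \xi} \psi(x - t) \big)$ for $z = (t,\xi)$, combined with $|x^\alpha| \lesssim \eabs{x-t}^{|\alpha|} \eabs{t}^{|\alpha|}$ and $\psi \in \cS(\rr d)$, yields $|x^\alpha D^\beta( \Pi(z) \psi )(x)| \lesssim \eabs{z}^{|\alpha| + |\beta|}$ uniformly in $x$. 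Since $V_\psi u$ decays faster than any power, the integral converges absolutely and is bounded uniformly in $x$ once $N$ is chosen large, so every Schwartz seminorm of $u$ is finite and $u \in \cS(\rr d)$. I expect the only genuinely technical point to be the justification of differentiation under the integral and of the interchange with the $\cS$-valued integral, which is a routine dominated-convergence argument resting on exactly the estimates above; the real content of the proof is the compactness step combined with the STFT inversion formula. (H\"ormander's original argument instead uses the first, Weyl-calculus, characterization, extracting a finite cover and assembling from it a globally elliptic Shubin symbol, and arrives at the same conclusion.)
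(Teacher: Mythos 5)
Your proof is correct, but it takes a different route from the one the paper leans on: the paper does not prove this proposition itself, it quotes \cite[Proposition~2.4]{Hormander1}, where the argument is run through the first (Weyl-calculus) characterization in Definition~\ref{def:wfgl} --- finitely many non-characteristic symbols are patched into a globally elliptic $a \in G^0$ with $a^w(x,D)u \in \cS(\rr d)$, and a Shubin parametrix then yields $u \in \cS(\rr d)$. You instead work entirely with the second bullet of Definition~\ref{def:wfgl}: the easy direction follows from the mapping property $\cS(\rr d)\ni u \mapsto V_\psi u \in \cS(\rr {2d})$; for the converse you use compactness of $S_{2d-1}$ to pass from a cone around each direction to finitely many cones covering $\rr {2d}\setminus\{0\}$, hence global rapid decay of $V_\psi u$ (boundedness near the origin being clear since $V_\psi u$ is smooth), and then the inversion formula turns this into finiteness of every Schwartz seminorm of $u$. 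The estimates you indicate are the right ones: $|x^\alpha D^\beta(\Pi(z)\psi)(x)| \lesssim \eabs{z}^{|\alpha|+|\beta|}$ uniformly in $x$, together with $|V_\psi u(z)| \lesssim \eabs{z}^{-N}$ for all $N$, give absolute convergence and justify differentiating under the integral; one then identifies the resulting Schwartz function with $u$ through the Moyal identity, exactly as you say. In effect you are reproving the equivalence $u \in \cS(\rr d) \Leftrightarrow V_\psi u$ rapidly decreasing (Gr\"ochenig's Theorem~11.2.5, which the paper cites in connection with Lemma~\ref{lem:hstftprop}). What your route buys is that it only uses objects already set up in Section~\ref{sec:prelim} and is quite elementary; its cost is that it presupposes the (nontrivial, cited) equivalence of the STFT characterization with H\"ormander's original symbolic definition, whereas H\"ormander's own argument works directly from the Weyl-calculus definition at the price of the parametrix machinery.
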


For each linear symplectic map $\chi$ on $T^*(\rr d)$, there exists a (unique up to a phase factor) unitary operator $U_\chi$ on $L^2(\rr d)$, such that
$$
U_\chi^*a^w(x,D)U_\chi=(a \circ \chi)^w(x,D).
$$
\begin{prop}\label{prop:sympinv}
\cite[Proposition~2.2]{Hormander1}
Let $u \in \cS' (\rr d)$ and let $\chi$ be a linear symplectic map. Then
$$
\WF(U_\chi u) = \chi \WF(u).
$$
\end{prop}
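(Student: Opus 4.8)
The plan is to use the first characterization in Definition~\ref{def:wfgl}, via Shubin pseudodifferential operators, together with the metaplectic intertwining relation $U_\chi^* a^w(x,D) U_\chi = (a \circ \chi)^w(x,D)$. The key algebraic observation is that composition with a linear invertible map preserves the Shubin class: if $a \in G^0$ and $\chi$ is a linear symplectomorphism, then $a \circ \chi \in G^0$, because $\eabs{\chi z} \asymp \eabs{z}$ and, by the chain rule, every derivative $\partial^\gamma(a\circ\chi)$ is a fixed linear combination of terms $(\partial^\delta a)\circ\chi$ with $|\delta|=|\gamma|$; here it is essential that the Shubin estimates \eqref{eq:shubinineq} are isotropic in $(x,\xi)$, so that the mixing of variables and covariables by $\chi$ causes no loss of order. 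Moreover $U_\chi$ restricts to a continuous bijection of $\cS(\rr d)$ (and of $\cS'(\rr d)$), which is standard for metaplectic operators.

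First I would prove the inclusion $\WF(U_\chi u) \subseteq \chi\,\WF(u)$. Suppose $(x_0,\xi_0) \notin \WF(u)$, so there is $a \in G^0$ with $a^w(x,D) u \in \cS(\rr d)$ and $|a(s(x_0,\xi_0))|$ bounded below by a positive constant for $s>0$ large. Set $b = a \circ \chi^{-1} \in G^0$. From the intertwining relation, applied to $b$, one gets $U_\chi^* b^w(x,D) U_\chi = (b\circ\chi)^w(x,D) = a^w(x,D)$, hence $b^w(x,D) = U_\chi\, a^w(x,D)\, U_\chi^*$ and therefore $b^w(x,D)(U_\chi u) = U_\chi\, a^w(x,D) u \in U_\chi\,\cS(\rr d) = \cS(\rr d)$. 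Since $\chi$ is linear, $b(s\chi(x_0,\xi_0)) = a(\chi^{-1}(s\chi(x_0,\xi_0))) = a(s(x_0,\xi_0))$, so $b$ is non-characteristic at $\chi(x_0,\xi_0)$; thus $\chi(x_0,\xi_0) \notin \WF(U_\chi u)$. Taking contrapositives gives $\WF(U_\chi u) \subseteq \chi\,\WF(u)$.

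For the reverse inclusion I would apply the inclusion just proved with $u$ replaced by $U_\chi u$ and $\chi$ by $\chi^{-1}$, obtaining $\WF(U_{\chi^{-1}} U_\chi u) \subseteq \chi^{-1}\WF(U_\chi u)$. Since $U_{\chi^{-1}} U_\chi$ is a metaplectic operator associated with the identity symplectic map, it equals $c\,\mathrm{Id}$ for some $|c|=1$, and $\WF$ is plainly invariant under multiplication by a nonzero scalar (by any of the equivalent conditions in Definition~\ref{def:wfgl}); hence $\WF(u) \subseteq \chi^{-1}\WF(U_\chi u)$, i.e.\ $\chi\,\WF(u) \subseteq \WF(U_\chi u)$. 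Combining the two inclusions yields $\WF(U_\chi u) = \chi\,\WF(u)$.

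The only genuinely non-formal points are the stability $a \in G^0 \Rightarrow a\circ\chi \in G^0$ and the continuity of $U_\chi$ on $\cS(\rr d)$; everything else is bookkeeping with the intertwining relation and the linearity of $\chi$. I expect the step one should be most careful about is the identity $U_\chi\,\cS(\rr d) = \cS(\rr d)$, i.e.\ that metaplectic operators are continuous automorphisms of the Schwartz space, which should either be cited or justified (e.g.\ from the generation of the metaplectic group by partial Fourier transforms, chirp multiplications and dilations, each of which manifestly preserves $\cS(\rr d)$).
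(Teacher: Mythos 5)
Your argument is correct: invariance of the Shubin class $G^0$ under composition with a linear bijection, the intertwining relation transferring both the non-characteristicity at $\chi(x_0,\xi_0)$ and the property $b^w(x,D)(U_\chi u)\in\cS(\rr d)$, and the reverse inclusion obtained by applying the first inclusion to $\chi^{-1}$ together with $U_{\chi^{-1}}U_\chi=c\,\mathrm{Id}$ (uniqueness of $U_\chi$ up to a phase) are all sound. The paper itself gives no proof but only cites \cite[Proposition~2.2]{Hormander1}, and your proof is essentially the standard metaplectic-covariance argument behind that citation; the one point you rightly flag, that $U_\chi$ restricts to a topological automorphism of $\cS(\rr d)$, is standard (e.g.\ via the generators of the metaplectic group) and should simply be cited.
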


\begin{prop}
\cite[Proposition~2.5]{Hormander1}, \cite[Proposition~1.9]{Rodino1}
If $u \in \mathscr S'(\rr d)$ and $a \in G^m$ then
\begin{align*}
WF_G( a^w(x,D) u) \quad & \subseteq \quad WF_G(u) \bigcap \, \conesupp (a) \nonumber \\
& \subseteq \quad WF_G(u) \quad \subseteq \quad WF_G( a^w(x,D) u) \ \bigcup \ \charac (a),
\end{align*}
where $\conesupp (a)$ is the set of all
$z \in \rr {2d} \setminus \{ 0 \}$ such that any conic open set $\Gamma_z \subseteq \rr {2d} \setminus \{ 0 \}$ containing $z$ satisfies:
$$
\overline{\supp (a) \cap \Gamma_z} \quad \mbox{is not compact in} \quad \rr {2d}.
$$
\end{prop}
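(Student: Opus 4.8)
The plan is to deduce the three inclusions contained in the statement — $\WF(a^w(x,D)u)\subseteq\WF(u)$, $\WF(a^w(x,D)u)\subseteq\conesupp(a)$, and $\WF(u)\subseteq\WF(a^w(x,D)u)\cup\charac(a)$ (the intermediate step $\WF(u)\cap\conesupp(a)\subseteq\WF(u)$ being trivial) — from the Shubin--Weyl calculus, using the operator-theoretic characterization of $\WF$ in \defnref{def:wfgl}. The two calculus facts I would set up first, both being the analogues for the \emph{joint} conic structure in $(x,\xi)$ of familiar microlocal statements, are: (a) \emph{conic microlocality of the Weyl product}: if $p\in G^{m_1}$, $q\in G^{m_2}$ and $q$ is rapidly decreasing in a conic neighbourhood of $z_0\in\rr{2d}\setminus\{0\}$ — i.e. $\sup_{z\in\Gamma}\eabs z^N|\partial^\gamma q(z)|<\infty$ for all $N,\gamma$ on some conic $\Gamma\ni z_0$ — then so is $p\wpr q$ on a smaller conic neighbourhood of $z_0$; and (b) \emph{microlocal ellipticity}: if $p\in G^m$ is non-characteristic at $z_0$, there is $q\in G^{-m}$ with $q\wpr p=1+r$ and $r\in G^0$ rapidly decreasing in a conic neighbourhood of $z_0$. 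Fact (a) would be proved from the asymptotic expansion of the Weyl product together with an integration-by-parts estimate on the oscillatory-integral remainder, and fact (b) by the usual recursive parametrix construction, localized to a cone on which $1/p\in G^{-m}$ makes sense, followed by an asymptotic summation.

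The single lemma I would extract from (a) is: if $p\in G^m$ is rapidly decreasing in a conic neighbourhood of $z_0$, then $z_0\notin\WF(p^w(x,D)v)$ for every $v\in\cS'(\rr d)$. To see this, pick $\chi\in G^0$ non-characteristic at $z_0$ whose support lies, outside a ball, in that conic neighbourhood; then the asymptotic terms of $\chi\wpr p$ are supported in $\supp\chi$, hence rapidly decreasing, while the remainder after $N$ terms is of order $m-N$, so letting $N\to\infty$ gives $\chi\wpr p\in\cS(\rr{2d})$, whence $\chi^w(x,D)(p^w(x,D)v)=(\chi\wpr p)^w(x,D)v\in\cS(\rr d)$ and therefore $z_0\notin\WF(p^w(x,D)v)$. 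With this lemma, the microlocality inclusion $\WF(a^w(x,D)u)\subseteq\conesupp(a)$ is precisely the case $z_0\notin\conesupp(a)$, where $a$ vanishes on a conic neighbourhood of $z_0$ outside a ball. For $\WF(a^w(x,D)u)\subseteq\WF(u)$, I would fix $z_0\notin\WF(u)$, take $b\in G^0$ non-characteristic at $z_0$ with $b^w(x,D)u\in\cS(\rr d)$, and use the parametrix $q\in G^0$, $q\wpr b=1+r$ with $r$ rapidly decreasing near $z_0$, to write
$$
a^w(x,D)u=a^w(x,D)\,q^w(x,D)\bigl(b^w(x,D)u\bigr)-(a\wpr r)^w(x,D)u ;
$$
the first term is Schwartz, and $a\wpr r$ is rapidly decreasing near $z_0$ by (a), so the lemma yields $z_0\notin\WF(a^w(x,D)u)$.

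For the last inclusion, let $z_0\notin\WF(a^w(x,D)u)\cup\charac(a)$; since $z_0\notin\charac(a)$, $a$ is non-characteristic at $z_0$, and (b) furnishes $q\in G^{-m}$ with $q\wpr a=1+r$, $r$ rapidly decreasing near $z_0$, so that $u=q^w(x,D)\bigl(a^w(x,D)u\bigr)-r^w(x,D)u$. By the inclusion $\WF(c^w(x,D)v)\subseteq\WF(v)$ established in the previous paragraph (valid for any $c\in G^{m'}$ and $v\in\cS'(\rr d)$), applied with $c=q$ and $v=a^w(x,D)u$, the point $z_0$ is not in $\WF\bigl(q^w(x,D)(a^w(x,D)u)\bigr)$, and by the lemma it is not in $\WF(r^w(x,D)u)$; since $\WF$ is subadditive on sums and is unaffected by adding a Schwartz function (both immediate from the STFT description in \defnref{def:wfgl}), $z_0\notin\WF(u)$, which completes the proof. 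The routine ingredients — the equivalences in \defnref{def:wfgl}, the continuity of $a^w(x,D)$ on $\cS(\rr d)$ and $\cS'(\rr d)$, and the standard order estimates for Weyl-product remainders — are classical. The main obstacle I anticipate is the careful set-up of facts (a) and (b): because the conic structure is in $x$ and $\xi$ simultaneously, one cannot work with symbols compactly supported in a cone (the Weyl product is non-local and spreads supports), so one must carry the weaker notion of rapid decrease in a conic neighbourhood throughout and control the order-$(-N)$ Weyl-product remainders uniformly over a shrinking family of conic neighbourhoods of $z_0$.
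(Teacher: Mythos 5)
The paper does not prove this proposition; it is quoted from \cite[Proposition~2.5]{Hormander1} and \cite[Proposition~1.9]{Rodino1}, so there is no internal argument to compare against. Your proposal is, in outline, a correct reconstruction of the standard proof from those references: the conic microlocality of the Weyl product (your fact (a)), the conic microlocal parametrix (your fact (b)), and the lemma that a symbol rapidly decreasing, together with all its derivatives, in a conic neighbourhood of $z_0$ produces an operator whose output has no Gabor wave front at $z_0$, are exactly the ingredients used there, and your deductions of the three inclusions from them (including the identity $a^w(x,D)u=a^w(x,D)q^w(x,D)(b^w(x,D)u)-(a\wpr r)^w(x,D)u$ and the use of subadditivity of $\WF$ under sums and invariance under adding Schwartz functions) are sound. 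You also correctly locate the real technical burden in (a) and (b): since the conic structure is joint in $(x,\xi)$, one must work with rapid decay in cones rather than compact conic supports, control the order-$(m_1+m_2-2N)$ remainders of the Weyl product globally for every $N$, and in (b) carry cutoffs supported in the ellipticity cone so that the error $r$ is rapidly decreasing only near $z_0$ rather than globally smoothing. One small point to make explicit if you write this out: the paper's Definition~\ref{def:wfgl} defines non-characteristicity only for $G^0$ symbols via a lower bound along the ray, whereas the parametrix construction in (b) for general $a\in G^m$ requires the lower bound $|a(z)|\gtrsim\eabs{z}^m$ on a full conic neighbourhood of $z_0$ for large $|z|$; this is how $\charac(a)$ is defined in the cited works, and your fact (b) should be stated with that hypothesis.
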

\begin{thm}
\cite[Proposition~4.1]{Rodino1}
If $u \in \cS'(\rr d)$ and $a \in S_{0,0}^0$, i.e. $a \in C^\infty(\rr {2d})$ and $\pd \alpha a \in L^\infty$ for all $\alpha \in \nn {2d}$, then
$$
WF_G( a^w(x,D) u) \subseteq WF_G(u) \bigcap \, \conesupp(a).
$$
\end{thm}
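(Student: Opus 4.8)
\emph{Proof sketch.} The plan is to work with the short-time Fourier transform characterization in \defnref{def:wfgl}. It suffices to show that if a point $(x_0,\xi_0) \in \rr {2d}\setminus\{0\}$ does not lie in $WF_G(u) \cap \conesupp(a)$, then for some (hence every) $\varphi \in \cS(\rr d)\setminus\{0\}$ the STFT $V_\varphi(a^w(x,D)u)$ satisfies $|V_\varphi(a^w(x,D)u)(z)| \lesssim_N \eabs{z}^{-N}$ for all $N$, uniformly on an open cone around $(x_0,\xi_0)$. Fix $\psi \in \cS(\rr d)$ with $\| \psi \|_{L^2} = 1$ and $\varphi \in \cS(\rr d)\setminus\{0\}$, and write $A = a^w(x,D)$, which is continuous on $\cS(\rr d)$ and on $\cS'(\rr d)$. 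Inserting $A^*\Pi(z)\varphi \in \cS(\rr d)$ into the Moyal identity gives the kernel representation
\begin{equation*}
V_\varphi(Au)(z) = (u,\, A^*\Pi(z)\varphi) = (2\pi)^{-d}\int_{\rr {2d}} m(z,w)\, V_\psi u(w)\, \dd w, \qquad m(z,w) = (A\Pi(w)\psi,\, \Pi(z)\varphi),
\end{equation*}
whose absolute convergence follows from the kernel estimate below together with the polynomial bound $|V_\psi u(w)| \lesssim \eabs{w}^M$ valid for some $M$.

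The key step is the estimate
\begin{equation}\label{kernelbd}
|m(z,w)| \lesssim_N \eabs{z-w}^{-N}\, \eabs{\operatorname{dist}\big((z+w)/2,\ \supp a\big)}^{-N}, \qquad z,w \in \rr {2d}, \quad N \geq 0.
\end{equation}
To prove it I would use \eqref{weylwigner} to write $m(z,w) = (2\pi)^{-d}(a,\, W(\Pi(z)\varphi, \Pi(w)\psi))$, and then compute the cross-Wigner distribution: $W(\Pi(z)\varphi, \Pi(w)\psi)(\zeta)$ equals $W(\varphi,\psi)\big(\zeta - (z+w)/2\big)$ times a factor of modulus one, an exponential depending linearly on $\zeta$ with frequency a fixed linear image of $z-w$ (hence of modulus comparable to $|z-w|$). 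Since $\varphi,\psi \in \cS(\rr d)$, the function $W(\varphi,\psi)$ is Schwartz, so $m(z,w)$ is, up to a constant, the integral over $\rr {2d}$ of $a(\zeta)$ against an oscillating Schwartz bump localized near $(z+w)/2$. Because $a \in C_b^\infty(\rr {2d})$, integrating by parts in $\zeta$ against the oscillation gains a factor $\eabs{z-w}^{-1}$ at each step (derivatives landing on $a$ stay bounded, those on the bump stay Schwartz), which yields the first factor in \eqref{kernelbd}; and since $\supp(\pd \alpha a) \subseteq \supp a$ the integrand remains supported in $\supp a$ throughout, so the Schwartz decay of the bump away from its center $(z+w)/2$ produces the second factor.

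With \eqref{kernelbd} in hand I finish by a cone argument, distinguishing the two ways $(x_0,\xi_0)$ can fail to lie in $WF_G(u) \cap \conesupp(a)$. If $(x_0,\xi_0) \notin \conesupp(a)$, choose a convex open cone $\Gamma_1 \ni (x_0,\xi_0)$ with $\overline{\supp a \cap \Gamma_1}$ compact, so $\supp a \cap \Gamma_1 \subseteq B_R$ for some $R$, and an open cone $\Gamma_0$ whose closure (minus the origin) lies in $\Gamma_1$; for $z \in \Gamma_0$ with $|z|$ large, split the $w$-integral at $|z-w| = \ep|z|$: where $|z-w| \geq \ep|z|$ the factor $\eabs{z-w}^{-N}$ and the polynomial bound on $V_\psi u$ give decay in $\eabs{z}$, while where $|z-w| < \ep|z|$ with $\ep$ small, $w$ lies in a subcone of $\Gamma_1$ and $(z+w)/2$ in a proper subcone of $\Gamma_1$ with $|(z+w)/2| \gtrsim |z|$, whence (using that $\supp a \cap \Gamma_1$ is bounded and that the distance to $\Gamma_1^c$ is bounded below by a multiple of the modulus on a proper subcone) $\operatorname{dist}((z+w)/2,\supp a) \gtrsim |z|$, so \eqref{kernelbd} again yields decay in $\eabs{z}$. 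If instead $(x_0,\xi_0) \notin WF_G(u)$, there is an open cone $\Gamma_1 \ni (x_0,\xi_0)$ with $\sup_{w \in \Gamma_1}\eabs{w}^N|V_\psi u(w)| \lesssim 1$ for all $N$; picking $\Gamma_0$ as before and $z \in \Gamma_0$ large, split according to whether $w \in \Gamma_1$: if $w \notin \Gamma_1$ then $|z-w| \gtrsim |z|$, so \eqref{kernelbd} plus polynomial growth of $V_\psi u$ give decay, and if $w \in \Gamma_1$ the rapid decay of $V_\psi u$ there combined with the off-diagonal factor $\eabs{z-w}^{-N}$ of $m$ gives decay. In both cases $|V_\varphi(a^w(x,D)u)(z)| \lesssim_N \eabs{z}^{-N}$ for $z$ in the open cone $\Gamma_0$, so $(x_0,\xi_0) \notin WF_G(a^w(x,D)u)$.

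I expect the main obstacle to be the kernel estimate \eqref{kernelbd}, and in particular producing the off-diagonal factor and the distance-to-support factor simultaneously. The first requires exploiting the oscillation via integration by parts, which is precisely where the hypothesis $a \in C_b^\infty(\rr {2d})$ (not merely $a \in L^\infty$) is used; the second must survive these integrations by parts, which works because differentiation does not enlarge $\supp a$. The remaining ingredients — the Moyal-identity bookkeeping and the elementary homogeneity properties of cones in the last step — are routine.
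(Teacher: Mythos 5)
The paper itself gives no proof of this theorem—it is quoted directly from \cite{Rodino1}—and your argument is essentially the standard one behind that result: the almost-diagonalization estimate $|(a^w(x,D)\Pi(w)\psi,\Pi(z)\varphi)|\lesssim_N \eabs{z-w}^{-N}\eabs{\operatorname{dist}\left((z+w)/2,\supp a\right)}^{-N}$, obtained from \eqref{weylwigner}, the covariance of the cross-Wigner distribution under phase-space shifts, and integration by parts against the oscillation of frequency $\asymp|z-w|$ (which is exactly where $a\in S^0_{0,0}$ is used), fed into the Moyal identity and finished by the conic splitting you describe. Your proposal is correct; the only points left implicit are routine—boundedness of $V_\varphi(a^w(x,D)u)$ on bounded sets to pass from large $|z|$ to the full cone, and the homogeneity bound $\operatorname{dist}(z,\Gamma_1^c)\gtrsim|z|$ on a proper subcone—so it matches the cited approach.
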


\subsection{The homogeneous wave front set}

Next we introduce the homogeneous wave front set of Nakamura \cite{Nakamura1}.
Here the semiclassical dilation parameter $h$ is introduced in both phase space variables simultaneously.

\begin{defn}\label{hwfdef}
Let $u\in\cS' (\rr d)$. A point $(x,\xi)\in\rr {2d}\setminus\{0\}$ is not in the homogeneous wave front set $\HWF(u)$ if
there exists $a\in C_c^\infty (\rr {2d})$ with $a(x,\xi)=1$ such that
\begin{equation}\label{eq:homwfdef}
\|a_h^w(x,D)u\|_{L^2} = \|a^w(hx,hD)u\|_{L^2}=\cO (h^\infty)\text{ for }h\in(0,1].
\end{equation}
\end{defn}

The homogeneous wave front set has been studied in \cite{Ito1,Nakamura1} where it is used in results on propagation of singularities for Schr\"odinger type equations with variable coefficients.

In order to be able to assume additonal properties of the test function symbol $a$,
we need a result saying that Definition \ref{hwfdef} does not depend on the test function symbol $a$.
More precisely: If \eqref{eq:homwfdef} is valid for some test function $a$ then it holds for any
test function supported in a small neighborhood of $(x,\xi)$.
To prove this we need to develop a pseudodifferential calculus for semiclassically dilated symbols in the phase space variables, which we call a global semiclassical pseudodifferential calculus.
Such a calculus does not seem to exist in the literature, so we devote Appendix \ref{appendix} to this topic.
There we prove in particular the aforementioned invariance result Theorem \ref{HWFinvariance}.

\subsection{The $h$-dependent STFT}

Here we introduce the STFT with respect to a small positive parameter $h$.
Note that the parameter appears in both variables and covariables, as opposed to the standard concept in semiclassical analysis (cf. \cite{Martinez1,Zworski1}) where only the covariables are dilated.

\begin{defn}\label{hSTFTdef}
Let $u \in \cS'(\rr d)$ and $\varphi(y)=\pi^{-d/4}e^{-|y|^2/2}$ for $y \in \rr d$.
For $h\in(0,1]$ and $x,\xi \in \rr d$, the $h$-dependent STFT (short: $h$STFT)
is defined by
$$
T_h u(x,\xi) = (2\pi)^{-d/2} h^{-d}  (u, T_{x/h} M_{\xi/h} \fy) \in C^\infty(\rr {2d}).
$$
\end{defn}

We list some properties of the $h$STFT, following \cite[Proposition 6.1]{Hormander1}, \cite[Theorems 11.2.3 and 11.2.5]{Grochenig1} and the corresponding semiclassical results in \cite{Martinez1}.

\begin{lem}\label{lem:hstftprop}
The $h$-dependent STFT has the following properties.
\begin{enumerate}
\item We have
\begin{equation}
\label{eq:hstftgauss}
T_hu(x,\xi)=(2\pi)^{-d/2}h^{-d} e^{i x \cdot \xi/h^2} V_\varphi u (x/h,\xi/h).
\end{equation}
For fixed $h \in (0,1]$, $T_h$ maps:
\begin{enumerate}
	\item $\cS(\rr d)$ continuously into $\cS(\rr {2d})$, and if $u \in \cS$ then $|T_h u(x,\xi)| \leq C_N h^{-d} \langle (x/h,\xi/h)\rangle^{-N}$ for every $N \in \no$;
	\item $\cS'(\rr d)$ continuously into
$(C^\infty \cap \cS')(\rr {2d})$, and if $u \in \cS'$ then $|T_h u(x,\xi)| \leq C_N h^{-d} \langle (x/h,\xi/h)\rangle^N$ for some $N \in \no$;
	\item $L^2(\rr d)$ isometrically into $L^2(\rr {2d})$.
\end{enumerate}

\item We can write
$$
T_h u (x,\xi)=(2\pi)^{-d/2}h^{-d} e^{-|\xi|^2/(2h^2)}(u*\varphi)(x/h-i\xi/h)
$$
and deduce that for $u \in \cS'(\rr d)$, $e^{|\xi|^2/(2h^2)}T_h u (x,\xi)$ is an entire function of $x-i\xi$.

\item We have the Moyal identity
\begin{equation}\label{Moyal1}
\begin{aligned}
(u,g) & = (2\pi)^{-d/2} h^{-d} \int_{\rr {2d}} T_h u(x,\xi) \left( T_{x/h} M_{\xi/h} \fy,g \right) \, \dd x \, \dd \xi, \\
& \qquad \qquad g \in \cS(\rr d), \quad u \in \cS'(\rr d).
\end{aligned}
\end{equation}

\end{enumerate}
\end{lem}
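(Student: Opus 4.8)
The plan is to reduce each of the three assertions to a standard fact about the (non-semiclassical) STFT by means of the identity \eqref{eq:hstftgauss}, which itself follows at once from the canonical commutation relation $T_x M_\xi = e^{- i x \cdot \xi} M_\xi T_x$ together with the conjugate linearity of $(\cdot,\cdot)$ in its second slot: writing $T_{x/h} M_{\xi/h} \fy = e^{- i x \cdot \xi /h^2} M_{\xi/h} T_{x/h} \fy$ and pulling the unimodular scalar out of the pairing (which produces its conjugate) gives $(u, T_{x/h} M_{\xi/h} \fy) = e^{i x \cdot \xi /h^2} V_\fy u(x/h, \xi/h)$, that is, \eqref{eq:hstftgauss}. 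For fixed $h \in (0,1]$ the factor $e^{i x \cdot \xi /h^2}$ is smooth with all of its derivatives polynomially bounded, so multiplication by it is continuous on $\cS(\rr {2d})$ and on $\cS'(\rr {2d})$ and affects neither $|T_h u(x,\xi)|$ nor $L^2$-norms.

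With \eqref{eq:hstftgauss} in hand, the mapping properties (a)--(c) of item (1) follow by composing the corresponding properties of $V_\fy$ with the dilation $(x,\xi) \mapsto (x/h,\xi/h)$: the rapid decay of $V_\fy u$ for $u \in \cS(\rr d)$ and the joint continuity of $(u,\fy) \mapsto V_\fy u$ give (a); the smoothness of $V_\fy u$ and the polynomial bound valid for $u \in \cS'(\rr d)$ give (b); and for (c) the substitution $(y,\eta) = (x/h,\xi/h)$, i.e. $\dd x \, \dd \xi = h^{2d}\, \dd y \, \dd \eta$, cancels the prefactor $h^{-2d}$, so that $\| T_h u \|_{L^2}^2 = (2\pi)^{-d} \| V_\fy u \|_{L^2}^2 = \| u \|_{L^2}^2$ by the Moyal identity for the STFT and $\| \fy \|_{L^2} = 1$.

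For item (2) I would plug in the explicit Gaussian window and complete the square in the exponent: with $z = x/h - i \xi/h \in \cc d$ one has, for $t \in \rr d$,
$$
- \tfrac12 | t - x/h|^2 - i t \cdot \xi /h = - \tfrac12 ( t - z )^2 - \frac{i \, x \cdot \xi}{h^2} - \frac{|\xi|^2}{2 h^2},
$$
which yields $V_\fy u(x/h,\xi/h) = e^{- i x \cdot \xi /h^2} e^{- |\xi|^2/(2h^2)}\, (u * \fy)(z)$, where $\fy$ is extended to an entire function on $\cc d$ via $\fy(w) = \pi^{-d/4} e^{- w^2/2}$, $w^2 = w_1^2 + \dots + w_d^2$. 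Inserting this into \eqref{eq:hstftgauss}, the phase $e^{i x \cdot \xi /h^2}$ cancels and one obtains the asserted formula $T_h u(x,\xi) = (2\pi)^{-d/2} h^{-d} e^{-|\xi|^2/(2h^2)} (u*\fy)(x/h - i \xi/h)$. Since $w \mapsto \fy(w - \cdot)$ is a holomorphic $\cS(\rr d)$-valued map, $w \mapsto (u*\fy)(w)$ is entire on $\cc d$ for every $u \in \cS'(\rr d)$ (by Morera's theorem, or by differentiating under the duality bracket), so $e^{|\xi|^2/(2h^2)} T_h u(x,\xi) = (2\pi)^{-d/2} h^{-d} (u*\fy)\big( (x - i\xi)/h \big)$ is indeed an entire function of $x - i \xi$.

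For the $h$-Moyal identity in item (3) I would start from the STFT Moyal identity $(u,g) = (2\pi)^{-d} \int_{\rr {2d}} V_\fy u(z)\, \overline{V_\fy g(z)} \, \dd z$, valid since $\| \fy \|_{L^2}=1$, and substitute $z = (x/h,\xi/h)$, so $\dd z = h^{-2d}\, \dd x \, \dd \xi$. Writing $V_\fy u(x/h,\xi/h) = (2\pi)^{d/2} h^d e^{-i x \cdot \xi /h^2} T_h u(x,\xi)$ by \eqref{eq:hstftgauss}, and $\overline{V_\fy g(x/h,\xi/h)} = ( M_{\xi/h} T_{x/h} \fy, g) = e^{i x \cdot \xi/h^2} (T_{x/h} M_{\xi/h} \fy, g)$ by the commutation relation once more, the two phases cancel and the constants combine to $(2\pi)^{-d/2} h^{-d}$, which is \eqref{Moyal1}. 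Throughout, the argument is bookkeeping with the commutation relations, the conjugation convention and a change of variables; the one point that requires (standard) care is the holomorphy claim in item (2), and since every identity used is either pointwise or absolutely convergent, all of them hold for $u \in \cS'(\rr d)$ verbatim, with no density argument.
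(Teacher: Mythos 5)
Your proposal is correct, and it follows what is essentially the paper's (implicit) route: the paper does not prove this lemma but only refers to the standard STFT results in \cite{Hormander1}, \cite{Grochenig1} and \cite{Martinez1}, and your argument supplies exactly those standard details, reducing everything to properties of $V_\varphi$ via the commutation identity behind \eqref{eq:hstftgauss} together with the dilation $(x,\xi)\mapsto(x/h,\xi/h)$. The computations in items (2) and (3) (completing the square with $z=x/h-i\xi/h$, the phase cancellations, the change of variables with Jacobian $h^{2d}$, and the holomorphy of $w\mapsto (u*\varphi)(w)$ for $u\in\cS'$) all check out.
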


The following proposition gives a characterization of the Gabor wave front set in terms of the $h$STFT.
It will be an essential tool in the proof of the identity of the Gabor and the homogeneous wave front sets.

\begin{prop}\label{WFchar}
Let $u \in \cS'(\rr d)$. Then $0 \neq z_0 \in \rr {2d} \setminus \WF(u)$ if and only if there exists an open set $U \ni z_0$ such that
$$
\|T_h u |_U \|_{L^\infty(\rr {2d})} = \mathcal{O}(h^\infty), \quad h \in (0,1].
$$
\end{prop}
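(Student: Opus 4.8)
The plan is to connect the $h$STFT $T_h u$ to the ordinary STFT $V_\varphi u$ via the identity \eqref{eq:hstftgauss} and then translate the Gabor wave front set condition — phrased in Definition \ref{def:wfgl} as rapid decay of $V_\varphi u$ on an open cone — into the required $\cO(h^\infty)$ estimate on a fixed open neighborhood of $z_0$. Concretely, from \eqref{eq:hstftgauss} we have $|T_h u(x,\xi)| = (2\pi)^{-d/2} h^{-d} |V_\varphi u(x/h,\xi/h)|$, so for $z=(x,\xi)$ ranging over a bounded set $U$ the argument $z/h$ ranges, as $h \to +0$, over points of large modulus lying in the cone generated by $U$. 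The scaling $h^{-d}$ is harmless against any polynomial power of $h$, so the equivalence we want is, roughly, "$T_h u|_U = \cO(h^\infty)$" $\Longleftrightarrow$ "$V_\varphi u$ decays faster than any polynomial on the cone over $U$", which is exactly the second bullet of Definition \ref{def:wfgl}.

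For the forward direction (if $z_0 \notin \WF(u)$ then the estimate holds), I would take the open cone $\Gamma \ni z_0$ on which $\sup_{z\in\Gamma}\eabs{z}^N |V_\varphi u(z)| \lesssim 1$ for all $N$, and choose $U$ to be a bounded open neighborhood of $z_0$ whose closure, minus the origin, is contained in $\Gamma$ — e.g. a small ball around $z_0$, noting $z_0 \neq 0$. Then for $z \in U$ and $h \in (0,1]$ we have $z/h \in \Gamma$ (since $\Gamma$ is conic) and $\eabs{z/h} \gtrsim |z|/h \gtrsim h^{-1}$ uniformly for $z$ bounded away from $0$; hence $|V_\varphi u(z/h)| \lesssim \eabs{z/h}^{-N} \lesssim h^{N}$ for every $N$. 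Multiplying by $(2\pi)^{-d/2}h^{-d}$ and taking the supremum over $z \in U$ gives $\|T_h u|_U\|_{L^\infty} \lesssim h^{N-d}$ for all $N$, i.e. $\cO(h^\infty)$. One small point to handle carefully: $U$ must be bounded away from the origin so that $|z| \geq c > 0$ on $U$; this is available precisely because $z_0 \neq 0$.

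For the converse, suppose $\|T_h u|_U\|_{L^\infty} = \cO(h^\infty)$ for some open $U \ni z_0$. Let $\Gamma$ be the open cone generated by a small ball $B_r(z_0) \subseteq U$ with $0 \notin \overline{B_r(z_0)}$; then $\Gamma$ is an open cone containing $z_0$. Given $w \in \Gamma$ with $|w|$ large, write $w = z/h$ with $z \in B_r(z_0)$ and $h = |z|/|w| \cdot (\text{something bounded})$ — more precisely, for $w \in \Gamma$ there is a unique $s>0$ with $sw \in \partial B_{r/2}(z_0)$ say, and then $z := sw \in U$, $h := s$; for $|w|$ large we get $h \in (0,1]$ and $h \asymp |w|^{-1}$. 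Applying the hypothesis, $|V_\varphi u(w)| = (2\pi)^{d/2} h^{d} |T_h u(z)| \leq (2\pi)^{d/2} h^{d} \|T_h u|_U\|_{L^\infty} \lesssim h^{N} \lesssim |w|^{-N}$ for every $N$, so $\sup_{w \in \Gamma} \eabs{w}^N |V_\varphi u(w)| \lesssim 1$, which by Definition \ref{def:wfgl} means $z_0 \notin \WF(u)$. The main technical nuisance — and the step I expect to require the most care — is the bookkeeping in this converse direction: one must verify that the map $w \mapsto (z,h)$ obtained by radial scaling onto a sphere inside $U$ genuinely lands in $(0,1]$ for all sufficiently large $|w| \in \Gamma$, that $h \asymp |w|^{-1}$ with constants depending only on $U$ and $z_0$, and that the remaining (bounded, compact) part of $\Gamma$ is controlled by the a priori polynomial bound $|V_\varphi u(z)| \lesssim \eabs{z}^{N_0}$ valid for some $N_0$. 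Once these uniformities are in place, matching powers of $h$ against powers of $|w|$ is routine.
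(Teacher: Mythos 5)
Your proof is correct and follows essentially the same route as the paper's: rescale via \eqref{eq:hstftgauss}, take for the forward direction a neighborhood of $z_0$ bounded away from the origin inside the cone of rapid decay, and for the converse radially rescale each large point of the cone generated by a small ball in $U$ back into $U$ with $h \asymp |w|^{-1}$, handling the bounded part of the cone by the a priori bound on $V_\varphi u$. The only quibble is the parenthetical recipe ``$sw \in \partial B_{r/2}(z_0)$'': a ray through a point of the cone generated by $B_r(z_0)$ need not meet $B_{r/2}(z_0)$ at all, but since it does meet $B_r(z_0) \subseteq U$ you may simply take the intersection point there (as in your first formulation $w = z/h$ with $z \in B_r(z_0)$), so the argument stands as written.
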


This result can be visualized in phase space, see Figure \ref{fig:WFchar}.

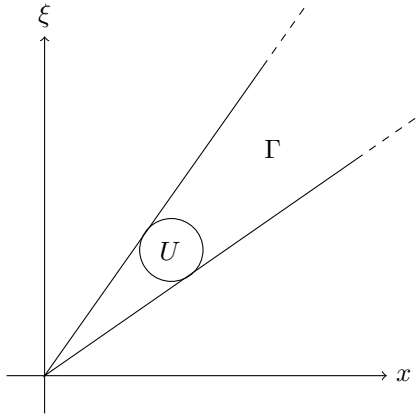
\begin{figure}[!h]
\begin{center}
\begin{tikzpicture}
    \draw[->] (-0.5,0)--(4.5,0) node[right]{$x$};
    \draw[->] (0,-0.5)--(0,4.5) node[above]{$\xi$};

\draw[-] ({5 * cos(55.0)},{5 * sin(55.0)}) -- ({0.0 * cos(55.0)},{0.0 * sin(55.0)}) arc (55:35:0.0) -- ({5 * cos(35.0)},{5 * sin(35.0)});
\draw[-,dashed]({5 * cos(35.0)},{5 * sin(35.0)}) -- ({6 * cos(35.0)},{6 * sin(35.0)});
\draw[-,dashed]({5 * cos(55.0)},{5 * sin(55.0)}) -- ({6 * cos(55.0)},{6 * sin(55.0)});

	\foreach \h in {0.6}{
	\draw ({1/\h},{1/\h}) circle ({0.25/\h});
}
\node at (1.65,1.65) {$U$};
\node at (3.0,3.0) {$\Gamma$};

\end{tikzpicture}
\end{center}
\caption{A comparison of the description of $\WF$ via $T_h$ and $V_\varphi$.}
\label{fig:WFchar}
\end{figure}

\begin{proof}
Suppose $0 \neq z_0 \notin \WF(u)$, i.e.
\begin{equation}
\label{eq:wfcharineq}
\sup_{z \in \Gamma_{z_0}} \eabs{z}^N |V_\varphi u(z)| < \infty \quad \forall N \geq 0,
\end{equation}
for an open conic set $\Gamma_{z_0} \subseteq \rr {2d} \setminus \{ 0 \}$ containing $z_0$.
Let $\Gamma' \subseteq \Gamma_{z_0}$ be an open cone containing $z_0$
such that $\overline{\Gamma' \cap S_{2d-1}} \subseteq \Gamma_{z_0}$,
and let $\ep>0$ be so small that
\begin{equation*}
U = \left\{ \frac{z |z_0|}{|z|} + w \in \rr {2d} \setminus \{ 0 \}: \, z \in \Gamma', \, |w| < \ep \right\} \subseteq \Gamma_{z_0}
\end{equation*}
and $\inf_{z \in U} |z| > 0$.
Then $U$ is open and contains $z_0$.
If $z \in U$ then $z/h \in \Gamma_{z_0}$ for all $h \in (0,1]$, so from \eqref{eq:hstftgauss} 
and \eqref{eq:wfcharineq} we obtain for any $N \geq 0$
\begin{align*}
\sup_{z \in U} |T_h u (z)|
& \lesssim h^{-d} \sup_{z \in U} \eabs{z/h}^{-N}
\lesssim h^{N-d}, \quad h \in (0,1].
\end{align*}

Suppose on the other hand $0 \neq z_0 \in U$ where $U$ is open and
\begin{equation*}
\sup_{z \in U} |T_h u (z)| \lesssim h^{N}, \quad h\in(0,1], \quad N \geq 0.
\end{equation*}
Define
\begin{equation*}
\Gamma = \left\{ z \in \rr {2d} \setminus \{ 0 \}: \,  \frac{z |z_0|}{|z|} \in U \right\}
\end{equation*}
which is a conic open subset of $\rr {2d} \setminus \{ 0 \}$ containing $z_0$.
If $z \in \Gamma$, $|z| \geq |z_0|$ and $h=|z_0|/|z|$ then $z h \in U$, and we have for any $N \geq 0$
\begin{equation*}
|V_\varphi u(z)| = (2 \pi)^{d/2} h^d |T_h u(h z)|
\lesssim h^{N+d} \lesssim |z|^{-N}.
\end{equation*}
As $V_\varphi u$ is smooth and therefore bounded for finite arguments we have
\begin{equation*}
|V_\varphi u(z)| \lesssim \eabs{z}^{-N}, \quad z \in \Gamma, \quad N \geq 0,
\end{equation*}
which means that $z_0 \notin \WF(u)$.
\end{proof}

\section{Preparation for the main result}
\label{sec:prep}

In order to prove the equality of the global and the homogeneous wave front sets, we will need to estimate the action of pseudodifferential operators with dilated symbols of the form $a_h^t(y,D_y)$ on $T_{x/h} M_{\xi/h} \varphi$ where $\varphi$ is a Gaussian, with assumptions on the support of the symbol $a\in G^0$.
For this purpose we show a series of lemmas.

First we need a lemma concerning the formal transpose of the operator $^t K$, defined for $y,\eta \in \rr d$ by
\begin{align*}
^t K = \frac{-\left(y - i \eta \right) \cdot \nabla_y}{|y|^2 + |\eta|^2}.
\end{align*}
The operator $^t K$ is continuous on $C^\infty(\rr {2d} \setminus \overline{B_\ep})$ for any $\ep>0$.
Its formal transpose is defined by
\begin{align*}
K f (y,\eta) = \nabla_y \cdot \left( \frac{\left(y - i \eta \right) f(y,\eta)}{|y|^2 + |\eta|^2} \right)
\end{align*}
and acts continuously on $C^\infty(\rr {2d} \setminus \overline{B_\ep})$ for any $\ep>0$.

\begin{lem}\label{Kestimate}
Let $\ep>0$ and $f \in C_b^\infty(\rr {2d})$.
For $N \in \no$ and $\beta \in \nn d$ arbitrary, the differentiated iterated formal transpose of $^t K$ obeys the estimate, for some $C_{N,\beta} > 0$,
\begin{equation}\label{KNestimate}
|\pdd \eta \beta K^N f(y,\eta) | \leq C_{N,\beta} \eabs{(y,\eta)}^{-N}, \quad y,\eta \in \rr d, \quad |y|^2+|\eta|^2 \geq \ep^2.
\end{equation}
\end{lem}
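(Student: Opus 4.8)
The plan is to prove the estimate by induction on $N$. First I would establish some uniform structure: the key observation is that $K$ applied to a function $g \in C_b^\infty(\rr {2d} \setminus \overline{B_\ep})$ produces a function that gains a factor of homogeneity degree $-1$ in a suitable sense. More precisely, writing $r^2 = |y|^2 + |\eta|^2$, the differential operator $K$ has the form $K = \sum_j \partial_{y_j} \circ \left( \frac{y_j - i \eta_j}{r^2} \cdot \right)$, so $Kg = \sum_j \frac{y_j - i\eta_j}{r^2} \partial_{y_j} g + \left( \sum_j \partial_{y_j} \frac{y_j - i\eta_j}{r^2} \right) g$. The coefficients $\frac{y_j - i\eta_j}{r^2}$ and $\partial_{y_j} \frac{y_j - i \eta_j}{r^2}$ are smooth on $\{ r \geq \ep \}$, the first bounded by $C/r$ and the second by $C/r^2 \leq C/(\ep r)$, and crucially all their $(y,\eta)$-derivatives of order $k$ are bounded by $C_k \eabs{(y,\eta)}^{-1-k}$ on $\{ r \geq \ep \}$ (they are ratios of polynomials, homogeneous away from the origin, and $r \geq \ep$ keeps the denominator away from zero). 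So the right bookkeeping device is the symbol-type class: say $g$ is ``of type $-N$'' on $\{ r \geq \ep \}$ if $|\partial_y^\alpha \partial_\eta^\beta g(y,\eta)| \leq C_{\alpha\beta} \eabs{(y,\eta)}^{-N - |\alpha| - |\beta|}$ there. A function in $C_b^\infty(\rr{2d})$ is of type $0$ (trivially, since $\eabs{(y,\eta)}^{-|\alpha|-|\beta|} \leq 1$), and I claim $K$ maps functions of type $-N$ to functions of type $-(N+1)$.

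Granting that claim, the lemma is immediate: $f \in C_b^\infty$ is of type $0$, hence $K^N f$ is of type $-N$, and in particular $|\partial_\eta^\beta K^N f(y,\eta)| \leq C_{N,\beta} \eabs{(y,\eta)}^{-N-|\beta|} \leq C_{N,\beta} \eabs{(y,\eta)}^{-N}$ on $\{ r \geq \ep \}$, which is \eqref{KNestimate}. So the whole proof reduces to the mapping property of $K$ on these classes, and that is just the Leibniz rule: if $g$ is of type $-N$ and $c(y,\eta)$ is one of the coefficients above (of type $-1$ in the strong sense that all its derivatives of order $k$ are $\lesssim \eabs{(y,\eta)}^{-1-k}$), then for any multi-indices one expands $\partial_y^\alpha \partial_\eta^\beta (c \cdot \partial_{y_j} g)$ by Leibniz, and each term is a product of a derivative of $c$ of some order $k_1$ (bounded by $\eabs{(y,\eta)}^{-1-k_1}$) and a derivative of $\partial_{y_j} g$ of order $1 + k_2$ with $k_1 + k_2 = |\alpha| + |\beta|$ (bounded by $\eabs{(y,\eta)}^{-N - 1 - k_2}$), so the product is bounded by $\eabs{(y,\eta)}^{-(N+1) - |\alpha| - |\beta|}$ as required; the $g$-without-derivative term $(\partial_{y_j} c) g$ is handled the same way. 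All the estimates are uniform on $\{ r \geq \ep \}$ because the constants only involve $\ep$ through the lower bound on the denominator $r^2 \geq \ep^2$.

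The main obstacle, such as it is, is purely notational: one must set up the symbol-type classes and the ``coefficient estimates'' for $\frac{y_j - i\eta_j}{r^2}$ and $\partial_{y_j}\frac{y_j - i\eta_j}{r^2}$ cleanly enough that the induction goes through without a proliferation of indices. The estimate on the coefficients themselves — that a function homogeneous of degree $-m$ in $(y,\eta) \neq 0$, times a cutoff to $\{ r \geq \ep \}$, has $k$-th order derivatives bounded by $C_{k,\ep} \eabs{(y,\eta)}^{-m-k}$ — is standard and I would state it as a one-line remark with the justification that homogeneity of degree $-m$ together with smoothness on the sphere gives the derivative bound on $\{ r \geq \ep \}$, and that $\eabs{(y,\eta)} \asymp r$ there since $r \geq \ep$. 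With that in hand the induction closes in a few lines, and tracking the $\beta$-derivative in \eqref{KNestimate} is no extra work since the classes are closed under all derivatives, not just $\eta$-derivatives.
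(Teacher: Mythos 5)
Your overall strategy (Leibniz rule plus the homogeneity bound $|\partial^\gamma\bigl((y_j-i\eta_j)/(|y|^2+|\eta|^2)\bigr)|\lesssim \eabs{(y,\eta)}^{-1-|\gamma|}$ on $\{|y|^2+|\eta|^2\ge\ep^2\}$, iterated $N$ times) is the same mechanism as the paper's proof, but the induction as you set it up does not start. The base case is false: a function $f\in C_b^\infty(\rr{2d})$ is in general \emph{not} ``of type $0$'' in your sense, since membership in that class requires $|\partial_y^\alpha\partial_\eta^\beta f|\lesssim \eabs{(y,\eta)}^{-|\alpha|-|\beta|}$, i.e. decay that improves with each derivative, whereas $C_b^\infty$ only gives uniform boundedness (take $f(y,\eta)=\sin y_1$: its first $y_1$-derivative is not $O(\eabs{(y,\eta)}^{-1})$). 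The parenthetical justification ``since $\eabs{(y,\eta)}^{-|\alpha|-|\beta|}\le 1$'' runs the inequality in the wrong direction, and so the conclusion ``$K^N f$ is of type $-N$'' is not delivered by your invariance claim.

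The gap is easily repaired, and the repaired argument is essentially the paper's. Use the weaker classes with no derivative gain in the function: say $g$ is of order $-N$ on $\{|y|^2+|\eta|^2\ge\ep^2\}$ if $|\partial_y^\alpha\partial_\eta^\beta g|\le C_{\alpha\beta}\eabs{(y,\eta)}^{-N}$ there for all $\alpha,\beta$. Then $f\in C_b^\infty$ is of order $0$, and $K$ maps order $-N$ to order $-(N+1)$: writing $Kg=\sum_j c_j\,\partial_{y_j}g+(\nabla_y\cdot c)\,g$ with $c_j=(y_j-i\eta_j)/(|y|^2+|\eta|^2)$, every derivative of $c_j$ and of $\nabla_y\cdot c$ is $O(\eabs{(y,\eta)}^{-1})$ on the region in question (only this crude consequence of homogeneity is needed, not the full gain $-1-k$), while all derivatives of $g$ are $O(\eabs{(y,\eta)}^{-N})$, so Leibniz gives the order $-(N+1)$ bound; since $K$ is a local operator the restriction to $\{|y|^2+|\eta|^2\ge\ep^2\}$ causes no trouble. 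Iterating yields $|\partial_\eta^\beta K^N f|\lesssim\eabs{(y,\eta)}^{-N}$, which is \eqref{KNestimate}. This is in substance what the paper does, except that it expands $K^N f$ explicitly as a sum of products of factors $\partial_y^{\alpha_k}\bigl(z_{j_k}/|z|^2\bigr)$, $z=y-i\eta$, times $\partial_y^{\alpha_0}f$, and estimates those factors directly; note that there the factor involving $f$ is likewise only ever estimated by a constant.
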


\begin{proof}
Set $z = y - i \eta \in \cc d$.
For $f \in C_b^\infty(\rr {2d})$ we have by induction if $z \neq 0$
\begin{align}\label{KNformula}
K^N f(z) = \sum_{j_1, \dots, j_N=1}^d \sum_{\substack{\alpha_0,\dots,\alpha_N \in \nn d:\\|\alpha_0|+\cdots+|\alpha_N|=N}} k_{\alpha} \, \partial_y^{\alpha_1} \left( \frac{z_{j_1}}{|z|^2} \right) \cdots \partial_y^{\alpha_N} \left( \frac{z_{j_N}}{|z|^2} \right) \partial_y^{\alpha_0} f(z)
\end{align}
where $k_\alpha=k_{\alpha_0,\dots,\alpha_N}$ is either zero or one.
Again by induction we have if $z \neq 0$
\begin{equation*}
\partial_\eta^{\beta} \partial_y^{\alpha} \left( \frac{z_j}{|z|^2} \right) = \frac{p_{\alpha,\beta,j}(y,\eta)}{|z|^{4 (|\alpha|+|\beta|)}}, \quad \alpha,\beta \in \nn d, \quad 1 \leq j \leq d,
\end{equation*}
where $p_{\alpha,\beta,j}$ is a polynomial such that $\deg(p_{\alpha,\beta,j}) \leq 3 (|\alpha|+|\beta|)-1$.
Using the assumption $|z|^2 \geq \ep^2$ we may thus estimate
\begin{equation*}
\left| \partial_\eta^{\beta} \partial_y^{\alpha} \left( \frac{z_j}{|z|^2} \right) \right|
 \leq C_{\alpha,\beta,j} \eabs{z}^{-|\alpha|-|\beta|-1}, \quad \alpha,\beta \in \nn d, \quad 1 \leq j \leq d.
\end{equation*}
Inserted into $\pdd \eta \beta$ acting on
\eqref{KNformula} this proves \eqref{KNestimate}.
\end{proof}

The following lemma estimates the action of a pseudodifferential operator in the $0$-quantization whose symbol vanishes around a given point. The operator acts on $T_{x/h} M_{\xi/h} \varphi$ where $\varphi$ is a Gaussian and $(x,\xi)$ belongs to a small neighborhood of the given point.

\begin{mainlem}\label{lem:homwf}
Let $\varphi(y)=e^{-|y|^2/2}$ for $y \in \rr d$, and suppose $a \in G^0$ vanishes in a neighborhood of $(x_0,\xi_0) \in \rr {2d}$.
For $\delta>0$ sufficiently small we have for any $m,k \in \no$
\begin{equation}\label{seminormestimate1}
\begin{aligned}
\rho_{m,k}\Big( a^0(hy,hD_y) \left( T_{x/h} M_{\xi/h} \varphi \right)\Big)
& \leq C_{N,m,k} h^N, \quad 0 < h \leq 1, \quad N \geq 0, \\
& \quad (x,\xi) \in B_\delta(x_0,\xi_0).
\end{aligned}
\end{equation}
\end{mainlem}

\begin{proof}
Let $\psi \in C_c^\infty(\rr d)$ equal one in a neighborhood of the origin.
By \eqref{regularization0}
\begin{align*}
& a^0(hy,hD_y) \left( T_{x/h} M_{\xi/h} \varphi \right)(y) \\
& = \lim_{\ep \rightarrow 0+} \int_{\rr {2d}} \psi(\ep \eta) \, e^{i(y-z) \cdot \eta} a(hz,h\eta) \, T_{x/h} M_{\xi/h} \varphi (z) \, \dd z \, \dbar \eta \\
&=h^{-2d} \lim_{\ep \rightarrow 0+} \int \psi(\ep \eta/h)
\, e^{i\Phi(x,z,\eta,\xi)/h^2} \, e^{iy \cdot \eta/h} a(z,\eta) \, \dd z \, \dbar \eta
\end{align*}
where $\Phi(x,z,\eta,\xi) = -z \cdot \eta+(z-x)\cdot \xi+i|z-x|^2/2$.

According to the assumptions there exists $\delta>0$ such that $\supp(a) \cap B_{2 \delta}(x_0,\xi_0) = \emptyset$.
Let $(x,\xi) \in B_\delta(x_0,\xi_0)$.
To show the seminorm estimate \eqref{seminormestimate1} for any $m,k \in \no$
we let $\alpha, \beta \in \nn d$ be arbitrary,
and estimate the supremum over $y \in \rr d$ of
the modulus of
\begin{equation}\label{seminorm2}
I = y^\beta \pdd y \alpha a^0(hy,hD_y) \, (T_{x/h} M_{\xi/h} \varphi) (y).
\end{equation}
To estimate $|I|$ we use Lemma \ref{Kestimate}. If we define
\begin{align*}
^tK & = \frac{-(z-x - i(\eta-\xi)) \cdot \nabla_z}{|z-x|^2+|\eta-\xi|^2}
\end{align*}
then $^t K$ is well defined for $(z,\eta) \notin B_{2\delta}(x_0,\xi_0)$, since
$$
|(z,\eta) - (x,\xi)| = | (z,\eta) - (x_0,\xi_0) - \left( (x,\xi) - (x_0,\xi_0) \right) | \geq 2 \delta - \delta = \delta.
$$
Moreover, $^t Ke^{i \Phi/h^2} = h^{-2}e^{i \Phi/h^2}$.
Integration by parts gives for any $N \in \no$
\begin{align}
I
& = h^{2(N-d)} \lim_{\ep \rightarrow 0+} \int_{(z,\eta) \in \rr {2d} \setminus B_{2\delta}(x_0,\xi_0)}
\psi(\ep \eta/h) \, e^{i\Phi(x,z,\eta,\xi)/h^2} \nonumber \\
& \qquad \qquad \qquad \qquad \qquad \qquad \qquad \times y^\beta (i \eta/h)^\alpha e^{iy \cdot \eta/h} K^N \left( a(z,\eta) \right) \dd z \, \dbar \eta \nonumber \\
& = h^{2(N-d)} \lim_{\ep \rightarrow 0+} \int_{(z,\eta) \in \rr {2d} \setminus B_{2\delta}(x_0,\xi_0)}
\psi(\ep \eta/h) \, e^{i\Phi(x,z,\eta,\xi)/h^2} \nonumber \\
& \qquad \qquad \qquad \qquad \qquad \times (i \eta/h)^\alpha h^{|\beta|} D_\eta^\beta ( e^{iy \cdot \eta/h} )
K^N \left( a(z,\eta) \right) \, \dd z \, \dbar \eta \nonumber \\
& = h^{2(N-d)+|\beta|-|\alpha|} \, i^{|\alpha|}
\lim_{\ep \rightarrow 0+} \int_{(z,\eta) \in \rr {2d} \setminus B_{2\delta}(x_0,\xi_0)} e^{iy \cdot \eta/h} \nonumber \\
& \qquad \times (-D_\eta)^\beta \left( \psi(\ep \eta/h) \, e^{i\Phi(x,z,\eta,\xi)/h^2} \, \eta^\alpha \,
K^N \left( a(z,\eta) \right) \right) \dd z \, \dbar \eta. \label{I1}
\end{align}

By means of Lemma \ref{Kestimate} we may estimate the integrand as
\begin{align*}
& \left| (-D_\eta)^\beta \left( \psi(\ep \eta/h) \, e^{i\Phi(x,z,\eta,\xi)/h^2} \, \eta^{\alpha} \,
K^N \left( a(z,\eta) \right) \right) \right| \\
& \leq \sum_{|\beta_1|+|\beta_2|+|\beta_3|+|\beta_4|=|\beta|}
C_{\beta_1,\beta_2,\beta_3,\beta_4}
\Big| \partial_\eta^{\beta_1} \left( \psi(\ep \eta/h) \right)
\partial_\eta^{\beta_2} \left( e^{i\Phi(x,z,\eta,\xi)/h^2} \right) \\
& \qquad \qquad \times \partial_\eta^{\beta_3} \left( \eta^{\alpha}  \right)
\partial_\eta^{\beta_4} K^N \left( a(z,\eta) \right) \Big| \\
& \leq C_{N,\alpha,\beta} \sum_{|\beta_1|+|\beta_2|+|\beta_3|+|\beta_4|=|\beta|}
h^{-|\beta_1|-2|\beta_2|} \ep^{|\beta_1|}
\left| \partial^{\beta_1} \psi(\ep \eta/h) \right| \eabs{z}^{|\beta|} e^{-|z-x|^2/(2 h^2)} \\
& \qquad \qquad \times \eabs{\eta}^{|\alpha|}
\eabs{(z-x,\eta-\xi)}^{-N} \\
& \leq C_{N,\alpha,\beta} \eabs{x}^{|\beta|} \eabs{\xi}^{|\alpha|} h^{-2 |\beta|} \\
& \qquad \times \sum_{|\beta_1|+|\beta_2|+|\beta_3|+|\beta_4|=|\beta|}
\ep^{|\beta_1|} \left| \partial^{\beta_1} \psi(\ep \eta/h) \right|
\eabs{z-x}^{|\beta|+N} e^{-|z-x|^2/2}
\eabs{\eta-\xi}^{|\alpha|-N}.
\end{align*}
Provided $N > d + |\alpha|$ the integral \eqref{I1} converges and we have
\begin{equation*}
|I| \leq C_{N,\alpha,\beta} h^{2(N-d) - |\alpha| - |\beta|}
\end{equation*}
for some constant $C_{N,\alpha,\beta}$ that does not depend on $h$.
Combined with \eqref{seminorm2} this shows that
\begin{equation*}
\sup_{y \in \rr d} \left| y^\beta \pdd y \alpha a^0(hy,hD_y) \left( T_{x/h} M_{\xi/h} \varphi \right)(y) \right|
\leq C_{N,\alpha,\beta} h^{2(N-d)-|\alpha|-|\beta|}
\end{equation*}
holds for $h \in (0,1]$, any $N \geq 0$ and any $\alpha ,\beta \in \nn d$.
\end{proof}

The next lemma is similar in spirit to the preceding one with opposite assumptions.
It assumes that the symbol is compactly supported
in a small ball, and $(x,\xi)$ stays outside a larger ball.
In contrast to the $0$-quantization of Main Lemma \ref{lem:homwf}, we use the Weyl quantization here.

\begin{mainlem}\label{L2estimate}
Let $\varphi(y)=e^{-|y|^2/2}$ for $y \in \rr d$, $(x_0,\xi_0) \in \rr {2d}$ and $\ep>0$.
Suppose $a \in C_c^\infty(\rr{2d})$ with
\begin{equation}\label{smallsupport1}
\supp(a) \subseteq B_{\ep/4}(x_0,\xi_0).
\end{equation}
For all $(x,\xi)\in\rr{2d}$ such that
\begin{equation}\label{ballcomplement1}
|x-x_0|^2 + |\xi-\xi_0|^2 \geq \ep^2 > 0,
\end{equation}
there exists for all $N\in\no$ a constant $C_N>0$ such that
\begin{equation*}
\| a^w(hy,hD_y) \left( T_{x/h} M_{\xi/h} \varphi \right) \|_{L^2}^2 \leq C_N h^{2N} \eabs{(x,\xi)}^{-N}, \quad h \in (0,1].
\end{equation*}
\end{mainlem}

\begin{proof}
We write $a_h(z)=a(h z)$, $z \in \rr {2d}$, and obtain by means of \eqref{weylwigner}
\begin{align*}
\| a_h^w(y,D_y) \left( T_{x/h} M_{\xi/h} \varphi \right) \|_{L^2}^2
& = ( a_h^w(y,D_y)^* a_h^w(y,D_y) \left( T_{x/h} M_{\xi/h} \varphi \right), T_{x/h} M_{\xi/h} \varphi)_{L^2} \\
& = (2 \pi)^{-d} \left( \overline{a}_h \wpr a_h, W \left( T_{x/h} M_{\xi/h} \varphi \right) \right)_{L^2(\rr {2d})}.
\end{align*}
We have by \cite[Chapter 18.5]{Hormander0}
\begin{equation*}
\overline{a}_h \wpr \, a_h (y,\eta) = \pi^{-2d} \int \overline{a_h (z,\zeta)} \, a_h(t,\theta) \,
e^{2 i \left( (\theta-\eta) \cdot (z-y) - (t-y) \cdot (\zeta-\eta) \right) }
\, \dd z \, \dd \zeta \, \dd t \, \dd \theta.
\end{equation*}
Denote by $\mathscr F_2$ the partial Fourier transformation of a function on $\rr d \oplus \rr d$ with respect to the second $\rr d$ variable.
Since $W(f)= \mathscr F_2(f \otimes \overline{f} \circ \kappa)$ for $f \in \cS(\rr d)$
and $\kappa(x,y)=(x+y/2,x-y/2)$,
we have
\begin{align*}
& \int_{\rr {2d}} W \left( T_{x/h} M_{\xi/h} \varphi \right)(y,\eta) \,
e^{2 i (\eta \cdot (t-z) - y \cdot (\theta-\zeta))} \, \dd \eta \, \dd y \\
& = (2 \pi)^d \, \int_{\rr d} (T_{x/h} M_{\xi/h} \varphi \otimes \overline{T_{x/h} M_{\xi/h} \varphi} \circ \kappa) (y,2(t-z)) \, e^{- 2 i y \cdot (\theta-\zeta)} \, \dd y \\
& = 2^d \pi^{3d/2} \exp \left(-|\theta-\zeta|^2 -|t-z|^2 + 2 i \left( \xi \cdot(t-z) - x \cdot (\theta-\zeta) \right)/h \right).
\end{align*}
Using the fact that $W(T_{x/h} M_{\xi/h})$ is real-valued we obtain
\begin{align*}
& \| a_h^w(y,D_y) \left( T_{x/h} M_{\xi/h} \varphi \right) \|_{L^2}^2 \\
& = \pi^{-3d/2} \int \overline{a_h (z,\zeta)} \, a_h(t,\theta) \,
e^{2 i h^{-1}\left( \xi \cdot(t-z) - x \cdot (\theta-\zeta) \right)  + 2 i (\theta \cdot z - \zeta \cdot t) - |\theta-\zeta|^2 -|t-z|^2} \, \dd z \,\dd \zeta \, \dd t \, \dd \theta \\
& = \pi^{-3d/2} h^{-4d} \int \overline{a(z,\zeta)} \, a(t,\theta) \,
e^{ 2 i h^{-2} \left( \xi \cdot(t-z) - x \cdot (\theta-\zeta) + \theta \cdot z - \zeta \cdot t  + i |\theta-\zeta|^2/2 + i |t-z|^2/2 \right) }
\,\dd z \, \dd \zeta \, \dd t \, \dd \theta.
\end{align*}

Writing $a(z,\zeta)=a_0(z-x_0,\zeta-\xi_0)$ and changing variables yield
\begin{multline}\label{stationaryphase1}
\pi^{3d/2} \| a_h^w(y,D_y) \left( T_{x/h} M_{\xi/h} \varphi \right) \|_{L^2}^2\\
 = h^{-4d} \int \overline{a_0(z,\zeta)} \, a_0(t,\theta) \,
e^{i\omega f (z,\zeta; t, \theta)}
\, \dd z \, \dd \zeta \, \dd t \, \dd \theta
\end{multline}
where we have written
\begin{align*}
\omega&= \lambda h^{-2}, \quad \lambda=\eabs{(x,\xi)-(x_0,\xi_0)},\\
f (z,\zeta; t, \theta) & = 2 \lambda^{-1} \Big( (x_0-x) \cdot (\theta-\zeta) - (\xi_0-\xi) \cdot(t-z) \\
& \qquad \qquad + \theta \cdot z - \zeta \cdot t  + i |\theta-\zeta|^2/2 + i |t-z|^2/2 \Big).
\end{align*}
Then
\begin{align*}
|f'|^2 & = |\nabla_z f|^2 + |\nabla_\zeta f|^2 + |\nabla_t f|^2 + |\nabla_\theta f|^2 \\
& = 4 \lambda^{-2} \Big( |\xi_0-\xi+\theta -i(t-z)|^2 + |\xi-\xi_0-\zeta +i(t-z)|^2  \\
& \qquad \qquad + |x-x_0-t - i(\theta-\zeta) |^2 + |x_0-x+z + i(\theta-\zeta) |^2 \Big) \\
& = 4 \lambda^{-2} \Big( 2 |\xi_0-\xi|^2 + 2 |x_0-x|^2 + |z|^2+|\zeta|^2+|t|^2+|\theta|^2 \\
& \qquad \qquad + 2 ((\theta+\zeta) \cdot (\xi_0-\xi) + (z+t) \cdot (x_0-x) )
+ 2 |\theta-\zeta |^2 + 2|t-z|^2 \Big).
\end{align*}

When $(z,\zeta), (t, \theta) \in \supp(a_0)$
we have by the assumptions \eqref{smallsupport1} and \eqref{ballcomplement1}
\begin{align*}
|(\theta+\zeta) \cdot (\xi_0-\xi) + (z+t) \cdot (x_0-x)| \leq  \frac1{2} \left( |x-x_0|^2 + |\xi-\xi_0|^2 \right),
\end{align*}
and thus when $(z,\zeta), (t, \theta) \in \supp(a_0)$
\begin{align*}
|f'|^2 + \im f
& \geq 4 \lambda^{-2} \left( |\xi_0-\xi|^2 + |x_0-x|^2 \right) \\
& \geq 2 \min(1,\ep)^2.
\end{align*}
Combining this with \eqref{stationaryphase1} and H\"ormander's nonstationary phase result \cite[Theorem 7.7.1]{Hormander0}, we get the following conclusion:
For any $N \in \no$ there exists $C_N>0$ that does not depend on $(x,\xi) \in \rr {2d}$, provided \eqref{ballcomplement1} is satisfied, such that
\begin{align*}
& \| a_h^w(y,D_y) \left( T_{x/h} M_{\xi/h} \varphi \right) \|_{L^2}^2 \\
& \leq C_N h^{-4d} \omega^{-N}
\sum_{|\alpha| \leq N} \sup_{\supp(a_0) \times \supp(a_0)} |\pd \alpha (\overline{a_0} \otimes a_0) | \left( |f'|^2 + \im f \right)^{|\alpha|/2-N} \\
& \lesssim C_N h^{2(N-2d)} \lambda^{-N} \\
& \lesssim C_N h^{2(N-2d)} \eabs{(x_0,\xi_0)}^{N} \eabs{(x,\xi)}^{-N} \\
& \leq C_N h^{2(N-2d)} \eabs{(x_0,\xi_0)}^{N} \eabs{(x,\xi)}^{-(N-2d)}.
\end{align*}
\end{proof}

\section{Proof of the main result}

\label{sec:main}
\begin{thm}\label{WFequality}
If $u \in \cS'(\rr d)$ then $\WF(u) = \HWF(u)$.
\end{thm}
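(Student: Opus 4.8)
We prove the two inclusions $\HWF(u)\subseteq\WF(u)$ and $\WF(u)\subseteq\HWF(u)$ separately; in each case we pass to complements and combine the $h$STFT characterisation of $\WF$ in Proposition \ref{WFchar} with one of the two Main Lemmas of Section \ref{sec:prep}. The common tools are the defining formula $T_h u(z)=(2\pi)^{-d/2}h^{-d}(u,T_{z_1/h}M_{z_2/h}\varphi)$, the reconstruction formula \eqref{Moyal1}, the continuity of dilated pseudodifferential operators, and the polynomial bound $|T_h u(z)|\lesssim h^{-d}\eabs{z/h}^{M}$ for a fixed $M$ (Lemma \ref{lem:hstftprop}); the recurring mechanism is to weigh the $\cO(h^\infty)$ and $\eabs{\cdot}^{-N}$ gains supplied by the Main Lemmas against the negative powers of $h$ coming from the normalisation $h^{-d}$ and the growth of $T_h u$.

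\emph{From $0\neq z_0\notin\WF(u)$ to $z_0\notin\HWF(u)$.} By Proposition \ref{WFchar} there is $\ep>0$ with $\|T_h u\|_{L^\infty(B_\ep(z_0))}=\cO(h^\infty)$. Choose $a\in C_c^\infty(\rr{2d})$ with $a(z_0)=1$ and $\supp(a)\subseteq B_{\ep/4}(z_0)$; it suffices to show $\|a_h^w(x,D)u\|_{L^2}=\cO(h^\infty)$. Applying $a_h^w(x,D)$ to \eqref{Moyal1} — the interchange with the operator being justified by continuity of $a_h^w(x,D)$ on $\cS'$ and an absolute-convergence argument in $L^2$ — yields
$$
a_h^w(x,D)u=(2\pi)^{-d/2}h^{-d}\int_{\rr{2d}}T_h u(w)\,a_h^w(x,D)\big(T_{w_1/h}M_{w_2/h}\varphi\big)\,\dd w .
$$
Split the integral at $|w-z_0|=\ep$. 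On $B_\ep(z_0)$ one uses $|T_h u(w)|=\cO(h^\infty)$ together with the uniform bound $\|a_h^w(x,D)\|_{L^2\to L^2}\lesssim 1$, $h\in(0,1]$ (Calder\'on--Vaillancourt, since the dilated symbols $a_h$ form a bounded subset of $S^0_{0,0}$ as $h$ ranges over $(0,1]$). Outside $B_\ep(z_0)$ the hypotheses of Main Lemma \ref{L2estimate} are satisfied, so $\|a_h^w(x,D)(T_{w_1/h}M_{w_2/h}\varphi)\|_{L^2}\leq C_N h^N\eabs{w}^{-N/2}$ for every $N$, uniformly in $w$; together with $|T_h u(w)|\lesssim h^{-d-M}\eabs{w}^{M}$ and $N$ chosen large, the tail integral is absolutely convergent and $\cO(h^\infty)$. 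Hence $z_0\notin\HWF(u)$.

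\emph{From $0\neq z_0\notin\HWF(u)$ to $z_0\notin\WF(u)$.} By the invariance result Theorem \ref{HWFinvariance} we may fix a real-valued $\chi\in C_c^\infty(\rr{2d})$ with $\chi\equiv 1$ on $B_{2\delta}(z_0)$ for some small $\delta>0$ and $\|\chi_h^w(x,D)u\|_{L^2}=\cO(h^\infty)$. Write $u=\chi_h^w(x,D)u+(1-\chi)_h^w(x,D)u$, where $(1-\chi)_h^w(x,D)$ denotes the Weyl quantization of $z\mapsto 1-\chi(hz)$, equal to $I-\chi_h^w(x,D)$. The first summand gives $|T_h(\chi_h^w(x,D)u)(z)|\leq(2\pi)^{-d/2}h^{-d}\|\chi_h^w(x,D)u\|_{L^2}\|\varphi\|_{L^2}=\cO(h^\infty)$ uniformly in $z$. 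For the second, for $z\in B_\delta(z_0)$,
$$
T_h\big((1-\chi)_h^w(x,D)u\big)(z)=(2\pi)^{-d/2}h^{-d}\big(u,\,(1-\chi)_h^w(x,D)\,T_{z_1/h}M_{z_2/h}\varphi\big),
$$
and $a:=1-\chi\in G^0$ vanishes on $B_{2\delta}(z_0)\ni z_0$. By Main Lemma \ref{lem:homwf} — stated there in the $0$-quantization, and transferred to the Weyl quantization by the change-of-quantization formula of the global semiclassical calculus of Appendix \ref{appendix}, which preserves the vanishing of the symbol near $z_0$ and introduces only additional powers of $h$ — every Schwartz seminorm of $(1-\chi)_h^w(x,D)(T_{z_1/h}M_{z_2/h}\varphi)$ is $\cO(h^\infty)$, uniformly for $z\in B_\delta(z_0)$. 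Since $u\in\cS'$, pairing against it and multiplying by $h^{-d}$ preserves $\cO(h^\infty)$, so $\|T_h u\|_{L^\infty(B_\delta(z_0))}=\cO(h^\infty)$ and Proposition \ref{WFchar} gives $z_0\notin\WF(u)$.

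The main obstacle lies in the second inclusion: reconciling the Weyl quantization in the definition of $\HWF$ with the $0$-quantization of Main Lemma \ref{lem:homwf}, that is, verifying that replacing $a^w(hx,hD)$ by a $0$-quantized operator neither spoils the support condition ``$a$ vanishes near $z_0$'' nor produces anything worse than extra factors of $h$ in the symbol expansion and its remainder; this is exactly the purpose of the calculus developed in Appendix \ref{appendix}. A subsidiary, but pervasive, point is the bookkeeping: in both inclusions the exponent $N$ in the Main Lemmas must be taken large relative to the fixed polynomial order $M$ of $T_h u$, so that the $\cO(h^\infty)$ gains overcome both the $h^{-d}$ normalisation and the growth of $T_h u$.
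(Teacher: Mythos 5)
Your first inclusion ($z_0\notin\WF(u)\Rightarrow z_0\notin\HWF(u)$) is essentially the paper's argument: the paper runs it through the duality $\|a_h^w(x,D)u\|_{L^2}=\sup_{\|g\|_{L^2}=1}|(u,\overline a_h^w(x,D)g)|$ and then the Moyal identity, which avoids the interchange of $a_h^w(x,D)$ with the vector-valued Moyal integral that you have to justify separately; otherwise the splitting into $U$ and its complement, the Calder\'on--Vaillancourt bound on the near region, and Main Lemma \ref{L2estimate} together with the polynomial bound on $T_hu$ on the far region are the same.

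In the second inclusion there is a genuine gap at the quantization step. You keep the Weyl quantization throughout and claim that Main Lemma \ref{lem:homwf} transfers because the change of quantization ``preserves the vanishing of the symbol near $z_0$ and introduces only additional powers of $h$''. That is not true of the exact transformed symbol: if $b$ is the $0$-quantization symbol of the operator $(1-\chi)_h^w(y,D_y)$, then $b$ is only given by an asymptotic expansion whose individual terms are derivatives of $1-\chi$ (and hence do have the support property) plus a remainder lying in $h^{2N}G_h^{-2N}$, which has no support property at all and to which Main Lemma \ref{lem:homwf} does not apply. To close your argument you would have to estimate the remainder acting on $T_{z_1/h}M_{z_2/h}\varphi$ by a separate mechanism --- e.g. using that its symbol seminorms are $\cO(h^{2N})$, the uniform-in-$h$ continuity of such operators on $\cS(\rr d)$, the at most $h^{-k}$ growth of the Schwartz seminorms of the coherent state for $(x,\xi)\in B_\delta(z_0)$, and the arbitrariness of $N$ --- none of which appears in your write-up; note also that the appendix only states the Kohn--Nirenberg-to-Weyl conversion (Theorem \ref{KNweyl}), so the Weyl-to-$0$-quantization statement you invoke would itself have to be supplied. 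The paper sidesteps all of this by a different ordering: Proposition \ref{kohnnirenberg} first converts the hypothesis into $\|a(hy,hD_y)u\|_{L^2}=\cO(h^\infty)$ for the Kohn--Nirenberg quantization (the expansion remainder being absorbed there, on the $u$-side, via $u\in Q^s$), and then the exact adjoint identity $(a(x,D)u,g)=(u,\overline a^{\,0}(x,D)g)$ places one literally in the setting of Main Lemma \ref{lem:homwf} with the symbol $1-\overline a$, which genuinely vanishes near $(x_0,\xi_0)$, so no expansion on the test-function side and no remainder analysis is needed. Your route can be repaired along the lines indicated (or by reproving Main Lemma \ref{lem:homwf} directly for the Weyl quantization), but as written the key support claim is false and the remainder is unaccounted for.
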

\begin{proof}
Suppose $0 \neq (x_0,\xi_0) \notin \HWF(u)$.
By Definition \ref{hwfdef} and Theorem \ref{HWFinvariance} (see Appendix \ref{appendix}) there exists $a \in C_c^\infty(\rr {2d})$ with $a=1$ in a neighborhood of $(x_0,\xi_0)$ such that
\begin{equation*}
\| a^w (hy,hD_y) u \|_{L^2(\rr d)} \lesssim h^N, \quad h \in (0,1], \quad N \geq 0.
\end{equation*}
By Proposition \ref{kohnnirenberg} (see Appendix \ref{appendix}) we may assume that
\begin{equation*}
\| a(hy,hD_y) u \|_{L^2(\rr d)} \lesssim h^N, \quad h \in (0,1], \quad N \geq 0.
\end{equation*}
Write $u=a(hy,hD_y)u+(1-a)(hy,hD_y)u$.
Using Definition \ref{hSTFTdef} we have with $\varphi(y)=\pi^{-d/4} e^{- |y|^2/2}$ for any $(x,\xi) \in \rr {2d}$
\begin{equation}\label{hSTFTestimate1}
\begin{aligned}
| T_h (a(hy,hD_y) u) (x,\xi) |
& \leq (2 \pi)^{-d/2} h^{-d} \| a(hy,hD_y) u \|_{L^2(\rr d)} \, \| \fy \|_{L^2} \\
& \lesssim h^{N-d}, \quad h \in (0,1], \quad N \geq 0.
\end{aligned}
\end{equation}

From $(a(x,D)u,g) = (u,\overline{a}^0(x,D) g)$ for $g \in \cS$, and Main Lemma \ref{lem:homwf},
we obtain for some $m,k \in \no$
\begin{align*}
|(T_h(1-a)&(hy,hD_y)u)(x,\xi)|\\
& = (2 \pi)^{-d/2} h^{-d} |( (1-a)(hy,hD_y)u, T_{x/h} M_{\xi/h} \varphi )| \\
& = (2 \pi)^{-d/2} h^{-d} |( u, (1-\overline{a})^0(hy,hD_y) \, (T_{x/h} M_{\xi/h} \varphi) )| \\
& \lesssim h^{-d} \rho_{m,k}\big( (1-\overline{a})^0(hy,hD_y) \, (T_{x/h} M_{\xi/h} \varphi)\big) \\
& \lesssim h^{N-d}, \quad h \in (0,1], \quad N \geq 0,
\end{align*}
for $(x,\xi) \in B_\delta(x_0,\xi_0)$ with $\delta>0$ sufficiently small.
Combining with \eqref{hSTFTestimate1} we may conclude
\begin{equation*}
\sup_{(x,\xi) \in B_\delta(x_0,\xi_0)} |T_h u(x,\xi)| \lesssim h^N, \quad h \in (0,1], \quad N \geq 0,
\end{equation*}
and by Proposition \ref{WFchar} it follows that $(x_0,\xi_0) \notin \WF(u)$.
This proves $\WF(u) \subseteq \HWF(u)$.

Suppose on the other hand that $0 \neq (x_0,\xi_0) \notin \WF(u)$.
By means of the Moyal identity \eqref{Moyal1} we obtain for $a \in C_c^\infty(\rr {2d})$
\begin{equation}\label{homogeneous1}
\begin{aligned}
 \| a^w &(hy,hD_y)u \|_{L^2} \\
&= \sup_{g \in \cS, \, \| g \|=1} \left| (a^w (hy,hD_y)u,g) \right| \\
& = \sup_{g \in \cS, \, \| g \|=1} \left| (u, \overline{a}^w (hy,hD_y)g) \right| \\
& \lesssim \sup_{g \in \cS, \, \| g \|=1}
h^{-d} \int_{\rr {2d}} \left| T_h u(x,\xi) \, ( a^w_h (y,D_y)(T_{x/h} M_{\xi/h} \varphi),g)_{L^2} \right| \, \dd x \, \dd \xi \\
& \lesssim h^{-d} \int_{\rr {2d}} \left| T_h u(x,\xi) \right| \, \| a_h^w (y,D_y)(T_{x/h} M_{\xi/h} \varphi) \|_{L^2} \, \dd x \, \dd \xi.
\end{aligned}
\end{equation}
By Proposition \ref{WFchar} there exists a neighborhood $U \ni (x_0,\xi_0)$ (that may be assumed to be relatively compact) such that
$$
\sup_{(x,\xi) \in U} \left| T_h u(x,\xi) \right| \lesssim h^N, \quad h \in (0,1], \quad N \geq 0.
$$
By the Calder\'on--Vaillancourt theorem (cf. e.g. \cite{Folland1}) we have for some $n \geq 0$ the estimate
\begin{align*}
\| a_h^w (y,D_y)(T_{x/h} M_{\xi/h} \varphi) \|_{L^2} \lesssim \| \varphi \|_{L^2} \sum_{|\alpha| \leq n} h^{|\alpha|} \sup \left| \pd \alpha a \right|.
\end{align*}
This gives
\begin{equation}\label{penultimastima}
\begin{aligned}
& \int_{U} \left| T_h u(x,\xi) \right| \, \| a_h^w (y,D_y)(T_{x/h} M_{\xi/h} \varphi) \|_{L^2} \, \dd x \, \dd \xi
\lesssim h^N, \\
& \qquad \qquad \qquad \qquad \qquad \qquad \qquad \qquad \qquad \quad h \in (0,1], \quad N \geq 0.
\end{aligned}
\end{equation}

It remains to estimate the integral over $\rr {2d}\setminus U$:
by Lemma \ref{lem:hstftprop} (1) (b) we have
$$
|T_h u(x,\xi)| \lesssim h^{-d} \eabs{(x/h,\xi/h)}^M \leq h^{-d-M} \eabs{(x,\xi)}^M, \quad h \in (0,1],
$$
for some $M \geq 0$.
Using Main Lemma \ref{L2estimate} this gives, provided $a \in C_c^\infty(\rr {2d})$ is supported in a sufficiently small
neighborhood of $(x_0,\xi_0)$,
\begin{align*}
& \int_{\rr {2d} \setminus U} \left| T_h u(x,\xi) \right| \, \| a_h^w (y,D_y)(T_{x/h} M_{\xi/h} \varphi) \|_{L^2} \, \dd x \, \dd \xi \\
& \lesssim h^{-d-M} \iint_{\rr {2d} \setminus U} \eabs{(x,\xi)}^M \, C_N h^N \eabs{(x,\xi)}^{-N/2} \, \dd x \, \dd \xi \\
& \lesssim h^{N-d-M},\quad h \in (0,1],
\end{align*}
provided $N > 2(M+2d)$.
Combined with \eqref{homogeneous1} and \eqref{penultimastima} this
shows that $(x_0,\xi_0) \notin \HWF(u)$, which completes the proof of
$\HWF(u) \subseteq \WF(u)$.
\end{proof}

\section{Existence of a tempered distribution with assigned Gabor wave front set}
\label{sec:exist}

For any given closed conic set $\Gamma \subseteq \rr {2d} \setminus \{0\}$, we construct in this section a distribution $u \in \cS'(\rr d)$ with $\WF(u)=\Gamma$. In order to do this we modify the construction given in \cite[Section~2.2]{Coriasco1} of a tempered distribution with assigned $\cS$ wave front set (cf. \cite{Coriasco2}), which itself was based on \cite[Theorem~8.1.4]{Hormander0}.

Constructing a distribution with given Gabor wave front set using pseudodifferential operators seems difficult. 
We construct $u$ in terms of modulated and translated Gaussian functions, which permits us to read off the decay properties of the STFT of $u$ from explicit expressions. This is an instance of the convenience of the characterization of $\WF(u)$ in terms of the STFT.

Let $(y,\eta) \in \rr {2d} \setminus \{ 0 \}$ and $k \in \no$ be fixed. We use the Gaussian window $\phi(x) = \pi^{-d/2} e^{-|x|^2/2}$ and set
\begin{equation}\label{eq:fk}
f_k(x;y,\eta) :=\pi^{d/2} M_{k^2 \eta} T_{k^2 y}\phi(x) = \exp\left(-\frac{1}{2}|x-k^2 y|^2 + i k^2 x \cdot \eta \right) \in \cS(\rr d). 
\end{equation} The modulus of the STFT of $f_k(\cdot,y,\eta)$ is
\begin{equation}\label{fkSTFT}
\big|V_\phi \big(f_k (\cdot;y,\eta)\big) (x,\xi) \big|
= \exp \left(-\frac{1}{4}\left(|x-k^2 y |^2 + |\xi - k^2 \eta|^2 \right) \right).
\end{equation}
For $g \in \cS(\rr d)$ we have
\begin{equation}\label{fkaction}
(f_k,g) = \pi^{d/2} \overline{V_\phi g( k^2 y,k^2 \eta)}.
\end{equation}
Since $V_\phi g \in \cS(\rr {2d})$ and $(y,\eta) \neq 0$, the series
\begin{equation}\label{fdefinition}
f(\cdot; y,\eta) = \sum_{k=1}^{\infty} \, f_k(\cdot;y,\eta)
\end{equation}
converges in $\cS'(\rr d)$ which gives $f(\cdot; y,\eta) \in \cS'(\rr d)$.
If $y \neq 0$ we have uniform convergence on compact subsets of $\rr d$, and $f(\cdot; y,\eta) \in L^\infty \cap C^\infty$.

\begin{thm}
For any closed conic set $\Gamma \subseteq \rr {2d}\setminus\{0\}$ there exists $u \in \cS'(\rr d)$ such that $\WF(u)=\Gamma$.
\label{thm:existence}
\end{thm}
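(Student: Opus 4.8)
The plan is to construct $u$ as a weighted superposition of the building blocks $f(\cdot;y,\eta)$ from \eqref{fdefinition}, one for each direction in a countable dense subset of $\Gamma$. Concretely, if $\Gamma\neq\emptyset$ (otherwise take $u=0\in\cS$) fix a countable dense set $\{(y_j,\eta_j)\}_{j\ge 1}$ of the compact metric space $\Gamma\cap S_{2d-1}$, write $w_j=(y_j,\eta_j)$, and set $u=\sum_{j\ge 1} 2^{-j} f(\cdot;w_j)$. The motivation is that, by \eqref{fkSTFT}, $V_\phi f(\cdot;w_j)$ is a sum of Gaussian bumps of modulus exactly $1$ centred at the points $k^2 w_j$, $k\ge 1$, all lying on the ray $\{s\,w_j:s>0\}$; the quadratic scaling makes consecutive centres $3$-separated (and in fact at distance $2k+1$), so along the ray the modulus keeps returning to $1$ while transversally to it every bump decays faster than any power. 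Thus $\WF(f(\cdot;w_j))$ should equal that ray, and superposing over directions dense in $\Gamma$ should fill $\Gamma$ because $\WF(u)$ is closed. This mimics the construction in \cite[Theorem~8.1.4]{Hormander0} and \cite[Section~2.2]{Coriasco1}.

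First I would verify $u\in\cS'(\rr d)$. By \eqref{fkaction}, for $g\in\cS(\rr d)$ and any $N$,
\[
|(f(\cdot;w_j),g)|=\pi^{d/2}\Big|\sum_{k\ge 1}\overline{V_\phi g(k^2 y_j,k^2\eta_j)}\Big|\le \pi^{d/2}\sum_{k\ge 1}C_{g,N}\eabs{k^2}^{-N},
\]
which is finite and bounded uniformly in $j$ since $|w_j|=1$; hence $\sum_j 2^{-j}|(f(\cdot;w_j),g)|<\infty$ and the series defining $u$ converges in $\cS'(\rr d)$. Since for fixed $z$ the map $z\mapsto\Pi(z)\phi$ is a single Schwartz function, $V_\phi$ commutes with this $\cS'$-convergent sum, so $V_\phi u=\sum_j 2^{-j}\sum_k V_\phi f_k(\cdot;w_j)$ pointwise, and by \eqref{fkSTFT}
\[
|V_\phi u(z)|\le \sum_{j\ge 1}2^{-j}\sum_{k\ge 1}\exp\!\big(-\tfrac14|z-k^2 w_j|^2\big),\qquad z\in\rr{2d}.
\]
For fixed $z$ the inner $k$-sum is bounded by an absolute constant by a standard packing estimate (the centres $k^2 w_j$ are $3$-separated), so the double series converges absolutely.

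Next, $\WF(u)\subseteq\Gamma$. Let $z_0\notin\Gamma$ with $|z_0|=1$; since $\Gamma\cap S_{2d-1}$ is compact and misses $z_0$, there is an open cone $\Gamma'\ni z_0$ and $c>0$ with $|z/|z|-w_j|\ge c$ for all $z\in\Gamma'$ and all $j$. Using $|z-k^2 w_j|^2=(|z|-k^2)^2+|z/|z|-w_j|^2\,k^2|z|\ge (|z|-k^2)^2+c^2 k^2|z|$ together with $k^2|z|\le\tfrac12(|z|^2+k^4)$ gives $|z-k^2 w_j|^2\ge c'(|z|^2+k^4)$ on $\Gamma'$ with $c'=\min(1,c^2/2)$. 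Hence on $\Gamma'$,
\[
|V_\phi u(z)|\le \exp\!\big(-\tfrac{c'}{4}|z|^2\big)\Big(\sum_j 2^{-j}\Big)\Big(\sum_k \exp\!\big(-\tfrac{c'}{4}k^4\big)\Big)\lesssim_N \eabs{z}^{-N},\qquad N\ge 0,
\]
so $z_0\notin\WF(u)$ by Definition \ref{def:wfgl}.

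Finally, $\Gamma\subseteq\WF(u)$, which I expect to be the main obstacle, since it requires ruling out cancellation among the infinitely many bumps. Fix $z_0\in\Gamma$ with $|z_0|=1$ and suppose, for contradiction, $z_0\notin\WF(u)$: there is an open cone $\Gamma_0\ni z_0$ with $\sup_{z\in\Gamma_0}\eabs{z}^N|V_\phi u(z)|<\infty$ for all $N$. By density choose $j$ with $w_j\in\Gamma_0$; then $k^2 w_j\in\Gamma_0$ for all $k$, and by \eqref{fkSTFT} the $(j,k)$-term of $V_\phi u(k^2 w_j)$ has modulus exactly $2^{-j}$, so the triangle inequality gives
\[
|V_\phi u(k^2 w_j)|\ge 2^{-j}-R_k,\qquad R_k:=\sum_{(j',k')\neq(j,k)}2^{-j'}\exp\!\big(-\tfrac14|k^2 w_j-k'^2 w_{j'}|^2\big).
\]
I would then show $R_k\to 0$ as $k\to\infty$: the $j'=j$ part is at most $2^{-j}C_0\exp(-\tfrac14(2k-1)^2)$ since the nearest other centre on the ray $\{s w_j\}$ is at distance $2k-1$; and for each fixed $j'\neq j$ one has $w_{j'}\neq w_j$, whence $|k^2 w_j-k'^2 w_{j'}|^2\ge \delta_{j'}(k^4+k'^4)$ for some $\delta_{j'}>0$ (by AM--GM as above), so the inner $k'$-sum is bounded by an absolute constant (packing) and tends to $0$ as $k\to\infty$; dominated convergence over $j'$ (counting measure, dominated by $2^{-j'}C_0$) then yields $R_k\to 0$. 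Consequently $|V_\phi u(k^2 w_j)|\ge 2^{-j-1}$ for all large $k$, forcing $\sup_{z\in\Gamma_0}\eabs{z}^N|V_\phi u(z)|\ge \eabs{k^2 w_j}^N 2^{-j-1}\to\infty$, a contradiction. Hence $z_0\in\WF(u)$; since $\WF(u)$ is closed and conic and $\{w_j\}$ is dense in $\Gamma\cap S_{2d-1}$, we get $\Gamma\subseteq\WF(u)$, and combined with the previous paragraph, $\WF(u)=\Gamma$.
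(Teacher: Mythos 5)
Your proposal is correct and follows essentially the same route as the paper: the same superposition $u=\sum_j 2^{-j}f(\cdot;w_j)$ over a dense set of directions in $\Gamma\cap S_{2d-1}$, the same scaling/separation estimate $|z-k^2w|^2\gtrsim |z|^2+k^4$ off $\Gamma$ for the inclusion $\WF(u)\subseteq\Gamma$, and the same lower bound along the rays $k^2w_j$ for the reverse inclusion. The only difference is bookkeeping in the lower bound: you argue by contradiction with density and dominated convergence (with the uniform packing constant $C_0$ as dominating bound), whereas the paper splits the $j$-sum at a finite index $M$ and invokes closedness of $\WF(u)$; both are valid and yield the same estimate.
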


\begin{proof}
Take a dense sequence of distinct vectors $\{w_j\}_{j \in \no} \subseteq \Gamma \cap S_{2d-1}$.
Then define by means of \eqref{fdefinition}
\begin{equation}\label{eq:Tdef}
u(x) := \sum_{j=0}^{\infty} 2^{-j} f(x;w_j), \quad x \in \rr d.
\end{equation}
It holds $u \in \cS'(\rr d)$. In fact, if $g \in \cS(\rr d)$ then by \eqref{fdefinition} and \eqref{fkaction}
\begin{align*}
|( f(\cdot;w_j), g) |
\leq \sum_{k=1}^\infty |(f_k(\cdot; w_j),g)|
= \pi^{d/2} \sum_{k=1}^\infty |V_\phi g( k^2 w_j)|.
\end{align*}
From $V_\phi g \in \cS(\rr {2d})$ and $|w_j|=1$ it follows that $|( f(\cdot;w_j), g) |$ is bounded by a constant uniformly over $j \in \no$, which shows that the sum in \eqref{eq:Tdef} converges in $\cS'(\rr d)$.

Suppose $0 \neq z_0 \notin \Gamma$.
By a standard scaling argument for separated cones, there exist an open conic set $\Gamma_0 \subseteq \rr {2d} \setminus \{0\}$ and $\ep>0$ such that $z_0 \in \Gamma_0$, and for any $z \in \Gamma_0$, $k \geq 1$, $w \in \Gamma$ with $|w|=1$, we have
\begin{align*}
|z - k^2w | & \geq \ep ( |z| + k^2).
\end{align*}
Using \eqref{fkSTFT} this gives for $z \in \Gamma_0$ and $N \geq 0$ arbitrary
\begin{align*}
\eabs{z}^N |V_\phi u(z)|
& \leq \sum_{j=0}^{\infty} 2^{-j} \sum_{k=1}^{\infty} \eabs{z}^N |V_\phi f_k(\cdot;w_j) (z)| \\
& \lesssim \sum_{k=1}^{\infty} \eabs{z}^N \exp \left(-\frac{\ep^2}{4} (|z|^2+k^4) \right)\lesssim 1.
\end{align*}
Thus $z_0 \notin \WF(u)$ which proves $\WF(u) \subseteq \Gamma$.

On the other hand, let $m \in \no$ be fixed and let $n \geq 1$ be an integer.
Formula \eqref{fkSTFT} gives
\begin{equation}\label{lowerbound1}
|V_\phi (f(\cdot; w_m))(n^2 w_m)|
\geq 1 - \sum_{k=1, \ k \neq n}^{\infty} |V_\phi (f_k(\cdot; w_m)) (n^2 w_m)|.
\end{equation}
We have again by \eqref{fkSTFT}
\begin{equation}\label{estimate1}
\begin{aligned}
|V_\phi (f_k(\cdot; w_j)) (n^2 w_m)|
& = \exp \left(-\frac{1}{4} \left| n^2 w_m - k^2 w_j \right|^2 \right) \\
& \leq \exp \left(-\frac{1}{4} (n+k)^2(n-k)^2 \right), \quad j \in \no.
\end{aligned}
\end{equation}
This gives
\begin{equation}
\label{zerolimit}
\begin{aligned}
\sum_{k\neq n} |V_\phi (f_k(\cdot; w_j)) (n^2 w_m)|
& \longrightarrow 0, \quad n \rightarrow +\infty,
\end{aligned}
\end{equation}
independently of $j \in \no$.

As a consequence, inserting $j=m$, we obtain from \eqref{lowerbound1}
\begin{align*}
|V_\phi (f(\cdot; w_m))(n^2 w_m)| \geq \frac1{2}, \quad n \geq N,
\end{align*}
for some integer $N \geq 1$, which in turn yields for $n \geq N$
\begin{equation}\label{lowerbound2}
\begin{aligned}
|V_\phi u(n^2 w_m)|
& = \left| \sum_{j=0}^{\infty} 2^{-j} V_\phi (f(\cdot;w_j) ) (n^2 w_m) \right| \\
& \geq 2^{-m-1} - \sum_{j=0, \, j \neq m}^{\infty} 2^{-j} \left| V_\phi (f(\cdot;w_j) ) (n^2 w_m) \right|.
\end{aligned}
\end{equation}

To estimate the remainder, we obtain from \eqref{fdefinition}, \eqref{estimate1} and \eqref{zerolimit}
\begin{align*}
\left| V_\phi (f(\cdot;w_j) ) (n^2 w_m) \right| & \leq 2, \quad j \in \no, \quad n \geq N,
\end{align*}
after possibly increasing $N$. This uniform bound with respect to $j \in \no$ implies that there exists
$M \geq m+1$ such that
\begin{equation}\label{remainder1}
\sum_{j=M}^{\infty} 2^{-j} \left| V_\phi (f(\cdot;w_j) ) (n^2 w_m) \right| \leq 2^{-m-3}, \quad n \geq N.
\end{equation}

For $0 \leq j \leq M-1$ and $j \neq m$ we have $|w_m-w_j| \geq \delta$ for some $\delta>0$.
This observation combined with \eqref{fkSTFT}, \eqref{fdefinition} and \eqref{zerolimit} gives
\begin{align*}
& \left| V_\phi (f(\cdot;w_j) ) (n^2 w_m) \right| \\
& \leq \left| V_\phi (f_n(\cdot;w_j) ) (n^2 w_m) \right|
+ \sum_{k\neq n}\left| V_\phi (f_k(\cdot;w_j) ) (n^2 w_m) \right|\\
& \leq \exp \left(-n^4 \delta^2/4 \right) + 2^{-m-5}
 \leq 2^{-m-4}, \quad 0 \leq j \leq M-1, \quad n \geq N,
\end{align*}
again after possibly increasing $N$.
We may conclude
\begin{align*}
\sum_{j=0, \, j \neq m}^{M-1} 2^{-j} \left| V_\phi (f(\cdot;w_j) ) (n^2 w_m) \right| \leq 2^{-m-3}, \quad n \geq N,
\end{align*}
which together with \eqref{remainder1} inserted into \eqref{lowerbound2} shows that
$$
|V_\phi u(n^2 w_m)| \geq 2^{-m-2}
$$
for $n \geq N$.
Thus $V_\phi u$ does not decay rapidly in any conic neighbourhood of $w_m$,
and it follows that $w_m \in \WF(u)$. This holds for all $m \in \no$, and on account of $\WF(u)$ being closed in $\rr {2d} \setminus \{ 0 \}$ it follows that $\Gamma \subseteq \WF(u)$.
\end{proof}

\appendix

\section{Global semiclassical pseudodifferential calculus}\label{appendix}

Here we develop a pseudodifferential calculus suitable for analyzing the expressions arising in the study of the homogeneous wave front set.
First we introduce a symbol class where the functions depend on a semiclassical small parameter $h$
and behave like dilated Shubin symbols (cf. \cite{Nicola1,Shubin1}).

\begin{defn}
Let $a=a(z;h) \in C^\infty (\rr {2d} \times (0,1])$. Then $a$ is a $h$-\emph{Shubin symbol} of order $m \in \ro$, denoted $a\in G_h^m$, if for all $\alpha \in \nn {2d}$ there exists a constant $C_{\alpha}>0$ such that
\begin{equation}\label{shubinclassh}
	|\pdd z \alpha a(z;h)| \leq C_{\alpha} h^{|\alpha|} \eabs{hz}^{m-|\alpha|}, \quad z \in \rr {2d}, \quad h \in (0,1].
\end{equation}
\end{defn}

The space $G_h^m$ is a Fr\'echet space equipped with the topology defined by the seminorms
\begin{equation*}
\sup_{z \in \rr {2d}, \ h \in (0,1]} h^{-|\alpha|} \eabs{h z}^{|\alpha|-m} |\pdd z \alpha a(z;h)|, \quad \alpha \in \nn {2d}.
\end{equation*}
A typical case of an $h$-dependent symbol in $G_h^m$ is
$$
a_h(z) = a(hz), \quad z \in \rr {2d}, \quad h \in (0,1],
$$
where $a \in G^m$.
The condition \eqref{shubinclassh} may be described equivalently as $a(\cdot/h;h) \in G^m$ uniformly over $h \in (0,1]$.
We observe that $a\in G_h^m$ and $b\in G_h^n$ implies $ab \in G_h^{m+n}$ and that $\partial^\alpha_z$ maps $G_h^{m}$ into $h^{|\alpha|} G_h^{m-|\alpha|}$.

\subsection{Asymptotic expansions}

\begin{defn}\label{asymptoticexpansion1}
Let $a_j \in G_h^{m_j}$, $j=0,1,\dots$, where $m_j \rightarrow - \infty$ as $j \rightarrow + \infty$, let $a \in C^\infty(\rr{2d} \times (0,1])$ and $h \in (0,1]$.
Set $\wt m_j = \max_{k \geq j} m_k$.
We write
$$
a \sim \sum_{j=0}^\infty h^{\wt m_0-m_j} a_j
$$
provided
\begin{equation}\label{restterm1}
a-\sum_{j=0}^{N-1} h^{\wt m_0-m_j} a_j \in h^{\wt m_0-\wt m_N} G_h^{\wt m_N}, \quad N \geq 1,
\end{equation}
i.e. for any $\alpha \in \nn {2d}$ there exists $C_{N,\alpha}>0$ such that
\begin{align*}
\left| \pdd z \alpha \left( a(z;h)-\sum_{j=0}^{N-1} h^{\wt m_0-m_j} a_j(z;h) \right) \right|
& \leq C_{N,\alpha} h^{\wt m_0-\wt m_N+|\alpha|} \eabs{h z}^{\wt m_N - |\alpha|}, \\
& \qquad z \in \rr {2d}, \quad h \in (0,1].
\end{align*}
\end{defn}

We note that $a \sim 0$ if and only if $a \in \cap_{k \in \no} h^k G_h^{-k}$.

In the following lemma we prove that series of symbols with orders approaching $-\infty$, jointly vanishing around zero when dilated by $1/h$, may be asymptotically summed up.

\begin{lem}\label{asymptotic2}
Suppose $a_j \in G_h^{m_j}$ for $j=0,1,\dots$, where $m_j \rightarrow - \infty$ as $j \rightarrow \infty$, and suppose there exists $\ep>0$ such that
\begin{equation}\label{supportassumption}
\supp(a_j (\cdot/h;h) )\cap B_\ep  = \emptyset, \quad h \in (0,1], \quad j=0,1,\dots .
\end{equation}
Then there exists $a \in G_h^{\wt m_0}$ such that $a \sim \sum_0^\infty h^{\wt m_0 - m_j} a_j$.
\end{lem}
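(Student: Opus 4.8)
The plan is to mimic the classical Borel-type summation argument for pseudodifferential symbols (cf. \cite{Shubin1}), adapted to the $h$-dependent Shubin scale, using a cutoff sequence to control the tails. First I would choose an excision function $\chi \in C^\infty(\rr{2d})$ with $\chi(z) = 0$ for $|z| \leq 1/2$ and $\chi(z) = 1$ for $|z| \geq 1$, and set, for a sequence $\varepsilon_j \to 0$ to be chosen,
$$
a(z;h) = \sum_{j=0}^\infty h^{\wt m_0 - m_j} \chi(\varepsilon_j h z) \, a_j(z;h).
$$
The support assumption \eqref{supportassumption} guarantees $\chi(\varepsilon_j h z) a_j(z;h) = a_j(z;h)$ whenever $\varepsilon_j$ is small relative to $\varepsilon$ and $|hz|$ is bounded below, so inserting these cutoffs changes nothing on the region where $a_j$ lives; their only role is to make the series converge in $C^\infty$ and to force each term to be small in high-order seminorms. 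Concretely, I would verify that $\chi(\varepsilon_j h z) a_j(z;h)$ lies in $h^{\wt m_0 - m_j} G_h^{\wt m_0}$ (reindexing the order upward is harmless since $\widetilde m_0 \geq m_j$) with seminorms that, thanks to the factor $\chi(\varepsilon_j h z)$ being supported in $|hz| \geq 1/(2\varepsilon_j)$, can be bounded by $C_\alpha \varepsilon_j^{\,r}$ for any fixed $r$ once $j$ is large; this uses $\eabs{hz}^{m_j - m_{j'}} \leq (2\varepsilon_j)^{m_{j'} - m_j}$ on the support when $m_j < m_{j'}$. Choosing $\varepsilon_j$ decreasing fast enough (depending on finitely many seminorm constants of $a_0, \dots, a_j$) makes $\sum_j h^{\wt m_0 - m_j}\chi(\varepsilon_j h z) a_j$ converge, together with all $z$-derivatives, uniformly in $h \in (0,1]$, so $a \in C^\infty(\rr{2d} \times (0,1])$ and in fact $a \in G_h^{\wt m_0}$.

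It then remains to check the asymptotic property \eqref{restterm1}: for each $N$, write
$$
a - \sum_{j=0}^{N-1} h^{\wt m_0 - m_j} a_j = \sum_{j=0}^{N-1} h^{\wt m_0 - m_j}\big(\chi(\varepsilon_j h z) - 1\big) a_j + \sum_{j=N}^\infty h^{\wt m_0 - m_j} \chi(\varepsilon_j h z) a_j.
$$
Each term in the first (finite) sum is supported in $|hz| \leq 1/\varepsilon_j$ and simultaneously — by \eqref{supportassumption} — in $|hz| \geq \varepsilon$, hence in a fixed compact annulus in the $hz$ variable, so it lies in $h^{\wt m_0 - m_j} G_h^{M}$ for \emph{every} order $M$, in particular in $h^{\wt m_0 - \wt m_N} G_h^{\wt m_N}$ (one gains an arbitrarily negative order on a region where $hz$ is bounded above, at the cost of constants depending on $\varepsilon_j$). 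For the infinite tail I would use that the construction already arranged, via the choice of $\varepsilon_j$, that $\sum_{j \geq N} h^{\wt m_0 - m_j}\chi(\varepsilon_j h z) a_j \in h^{\wt m_0 - \wt m_N} G_h^{\wt m_N}$ with the required uniform bounds, re-examining the seminorm estimate above with the order now measured against $\wt m_N = \max_{k \geq N} m_k$. Combining the two pieces gives the remainder estimate.

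The main obstacle I anticipate is bookkeeping the interaction between the three scales — the symbol order $m_j$, the semiclassical factor $h^{|\alpha|}\eabs{hz}^{m-|\alpha|}$, and the cutoff scale $\varepsilon_j$ — so that the choice of $\varepsilon_j$ can be made \emph{uniformly in $h$}. The key point that makes this work is that $\chi(\varepsilon_j h z)$ and its derivatives depend on $h$ and $z$ only through $hz$, so differentiating it produces exactly the factors $h^{|\beta|}(\varepsilon_j)^{|\beta|}$ that fit the $G_h$ scale; and on the support of $\chi(\varepsilon_j h z)$ one has $\eabs{hz} \gtrsim \varepsilon_j^{-1}$, which is precisely what converts "high order of vanishing" into "small constant". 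Once this is set up cleanly the convergence and the asymptotic estimate follow by the standard diagonal argument; I would present the $\varepsilon_j$ selection as: having chosen $\varepsilon_0, \dots, \varepsilon_{j-1}$, pick $\varepsilon_j \leq \min(\varepsilon_{j-1}, \varepsilon/2)$ so small that for all $|\alpha| \leq j$ the $\alpha$-seminorm of $h^{\wt m_0 - m_j}\chi(\varepsilon_j h \cdot) a_j$ as an element of $G_h^{\wt m_0}$ is at most $2^{-j}$, which is possible because that seminorm tends to $0$ as $\varepsilon_j \to 0$.
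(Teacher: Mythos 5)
There is a genuine gap: your excision is placed at the wrong scale, and this breaks the remainder estimate \eqref{restterm1} for the finitely many terms $j<N$. Your cutoff $\chi(\ep_j h z)$ equals $1$ only where $|hz|\ge \ep_j^{-1}$, whereas \eqref{supportassumption} only guarantees $|hz|\ge\ep$ on $\supp a_j(\cdot;h)$; since $\ep_j^{-1}\to\infty$, no choice of small $\ep_j$ makes the cutoff act as the identity on $\supp a_j$ (shrinking $\ep_j$ enlarges the discarded region), so your opening claim that $\chi(\ep_j hz)a_j=a_j$ is false. Consequently the piece $(\chi(\ep_j hz)-1)a_j$, $j<N$, is supported in the $h$-independent (in the $hz$-variable) annulus $\ep\le|hz|\le\ep_j^{-1}$ and there it is of unit size, carrying only the prefactor $h^{\wt m_0-m_j}$, while \eqref{restterm1} demands a bound $Ch^{\wt m_0-\wt m_N}\eabs{hz}^{\wt m_N}$ with $\wt m_0-\wt m_N>\wt m_0-m_j$. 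Your justification --- membership in $h^{\wt m_0-m_j}G_h^{M}$ for every $M$, ``in particular'' in $h^{\wt m_0-\wt m_N}G_h^{\wt m_N}$ --- conflates gaining symbol order with gaining powers of $h$: on a compact annulus in $hz$ the weight $\eabs{hz}$ is comparable to $1$, so membership in every $G_h^M$ is trivial and buys no power of $h$, and $h^{\wt m_0-m_j}G_h^M\not\subseteq h^{\wt m_0-\wt m_N}G_h^{\wt m_N}$ when $m_j>\wt m_N$. Concretely, take $m_j=-j$ and $a_j(z;h)=\eabs{hz}^{-j}\psi(hz)$ with $\psi$ vanishing for $|w|\le\ep$ and equal to one for $|w|\ge 2\ep$: since $\ep_j\le\ep_0$ for all $j$, your $a$ vanishes identically on $\{2\ep\le|hz|\le(2\ep_0)^{-1}\}$, so already $|a-a_0|=1$ there, which is not $\cO(h^{\wt m_0-\wt m_1}\eabs{hz}^{\wt m_1})=\cO(h)$ as $h\to 0$. (Your treatment of the infinite tail is essentially sound, since for $j\ge N$ the prefactor $h^{\wt m_0-m_j}$ and the order $m_j\le\wt m_N$ are already admissible and the far-out support only has to supply summable constants.)

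The paper's proof avoids this by excising at a scale that is $h$-independent in the \emph{original} variable: after reducing to strictly decreasing orders $m_0>m_1>\cdots$ (so $\wt m_j=m_j$), it sets $a(z;h)=\sum_j h^{m_0-m_j}a_j(z;h)(1-\chi(\ep_j z))$ with $\chi$ a bump equal to one near $0$, i.e.\ the excised region is $|z|\lesssim\ep_j^{-1}$, which in the $hz$-variable has size $O(h/\ep_j)$. This is exactly where \eqref{supportassumption} enters: the discarded piece $\chi(\ep_j z)a_j$ is then supported in $\ep\le|hz|\le 2h/\ep_j$, hence vanishes identically once $h<\ep\ep_j/2$, and in general the upper bound $|hz|\le 2h/\ep_j$ (together with $\eabs{hz}\le C|hz|$ there) produces precisely the missing factor $h^{m_j-m_N}$ for the terms $j<N$ (estimate \eqref{firsttermestimate}); on the kept pieces the lower bound $|hz|\ge h/\ep_j$ gives $\eabs{hz}^{m_j-m_N}\le\ep_j^{m_N-m_j}h^{m_j-m_N}$, which supplies the $2^{-j}$ summability for the tail (estimate \eqref{asymptotic1}), with a third, finite group of terms handled directly as in \eqref{threeterms}. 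So the repair of your scheme is to replace $\chi(\ep_j hz)$ by a cutoff living at scale $\ep_j^{-1}$ in $z$ (equivalently $O(h)$ in $hz$); with that change, and the preliminary reordering, your argument becomes the paper's.
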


\begin{proof}
First we observe that the requirement \eqref{restterm1} is invariant to a reordering of the indices.
By summing symbols of equal order we may thus assume that $m_0 > m_1> \cdots$.

Let $0 \leq \chi \in C_c^\infty(\rr {2d})$ satisfy $\chi(z)=1$ for $|z| \leq 1$ and $\chi(z)=0$ for $|z| \geq 2$,
and let $(\ep_j)$ be a sequence of positive numbers such that $\ep_j \rightarrow 0$ as $j \rightarrow +\infty$.
Set for $0 < h \leq 1$
\begin{equation*}
a(z;h) = \sum_{j=0}^\infty h^{m_0-m_j} a_j (z;h) (1-\chi(\ep_j z)).
\end{equation*}
For $z \in \rr {2d}$ and $h \in (0,1]$ fixed, in a neighborhood of $z$ the sum contains only a finite number of terms since $\chi(\ep_j z)=1$ for $\ep_j |z| \leq 1$. Hence $a \in C^\infty(\rr{2d} \times (0,1])$.

In order to show $a \sim \sum_0^\infty h^{m_0-m_j} a_j$ it suffices to show for any $N \geq 1$ and any $\alpha \in \nn {2d}$ the estimate for $z \in \rr {2d}$ and $h \in (0,1]$
\begin{equation}\label{sufficientestimate1}
\left| \pdd z \alpha \left(  a(z/h;h)-\sum_{j=0}^{N-1} h^{m_0-m_j} a_j(z/h;h) \right) \right|
\leq C_{N,\alpha} h^{m_0-m_N} \eabs{z}^{m_N-|\alpha|}.
\end{equation}
We have
\begin{equation}\label{threeterms}
\begin{aligned}
a(z/h;h)-\sum_{j=0}^{N-1} &h^{m_0-m_j} a_j(z/h;h)\\
& = - \sum_{j=0}^{N-1} h^{m_0-m_j} a_j(z/h;h) \, \chi(\ep_j z/h)\\
&\quad+ \sum_{j=N+|\alpha|+1}^\infty h^{m_0-m_j} a_j(z/h;h) \, (1-\chi(\ep_j z/h))\\
&\quad+ \sum_{j=N}^{N+|\alpha|} h^{m_0-m_j} a_j(z/h;h) \, (1-\chi(\ep_j z/h)).
\end{aligned}
\end{equation}

First we look at
\begin{align*}
\pdd z \alpha \big( a_j(z/h;h) & \, (1-\chi(\ep_j z/h)) \big)
= \pdd z \alpha  (a_j(z/h;h) ) (1-\chi(\ep_j z/h)) \\
& - \sum_{0 < \beta \leq \alpha} \binom{\alpha}{\beta} \partial_z^{\alpha-\beta}  (a_j(z/h;h) )(\ep_j/ h)^{|\beta|} \pdd z \beta \chi (\ep_j z/h)
\end{align*}
when $j \geq N+|\alpha|+1$.
In the support of $1-\chi(\ep_j z/h)$ we have $|z| \geq h \ep_j^{-1}$ so $m_j<m_N$ gives
\begin{align*}
\left| \pdd z \alpha  (a_j(z/h;h)) \, (1-\chi(\ep_j z/h)) \right|
& \leq C_{j,\alpha} \eabs{z}^{m_N-|\alpha|} \eabs{z}^{m_j-m_N} (1-\chi(\ep_j z/h)) \\
& \leq C_{j,\alpha} \, \ep_j^{m_N-m_j}  h^{m_j-m_N}  \eabs{z}^{m_N-|\alpha|} \\
& \leq 2^{-j-|\alpha|} \, h^{m_j-m_N}  \eabs{z}^{m_N-|\alpha|}, \quad j \geq N+|\alpha|+1,
\end{align*}
provided $\ep_j>0$ is chosen so small that
$C_{j,\alpha} \, \ep_j^{m_N-m_j} \leq 2^{-j-|\alpha|}$ for all $N,\alpha$ such that
$N+|\alpha|+1 \leq j$.  The choice of $\ep_j$ thus depends only on $j$.
The assumption \eqref{supportassumption} implies that we have $\eabs{z} \leq C |z|$ for some $C>0$ in the support of $a_j(\cdot/h;h)$ for all $h \in (0,1]$ and all $j \in \no$.
This gives for $0 < \beta \leq \alpha$,
using the fact that on the support of $\pd \beta \chi(\ep_j z/h)$ we have $|z| \geq h \ep_j^{-1}$,
\begin{align*}
& \big| \partial_z^{\alpha-\beta}  (a_j(z/h;h) ) (\ep_j /h)^{|\beta|} \pd \beta \chi (\ep_j z/h) \big| \\
& \leq C_{j,\alpha} \eabs{z}^{m_N-|\alpha|} \eabs{z}^{m_j-m_N+|\beta|}
(\ep_j /h)^{|\beta|} \left| \pd \beta \chi (\ep_j z/h) \right| \\
& \leq C_{j,\alpha} \, \ep_j^{m_N-m_j}  \, h^{m_j-m_N} \eabs{z}^{m_N-|\alpha|}
\left|\ep_j  z/h \right|^{|\beta|} \left| \pd \beta \chi (\ep_j z/h) \right| \\
& \leq C_{j,\alpha} \, \ep_j^{m_N-m_j}  h^{m_j-m_N} \eabs{z}^{m_N-|\alpha|} \\
& \leq 2^{-j-|\alpha|} h^{m_j-m_N} \eabs{z}^{m_N-|\alpha|}, \quad j \geq N+|\alpha|+1,
\end{align*}
provided $\ep_j>0$ is sufficiently small.
Combining the latter two estimates yields for $z \in \rr {2d}$
\begin{equation}\label{asymptotic1}
\left| \pdd z \alpha \big( a_j(z/h;h) (1-\chi(\ep_j z/h)) \big) \right|
\leq 2^{-j} h^{m_j-m_N} \eabs{z}^{m_N-|\alpha|}, \quad j \geq N+|\alpha|+1.
\end{equation}

We estimate the $\alpha$th derivative of the second term of the right hand side of \eqref{threeterms} by means of \eqref{asymptotic1} as, for $z \in \rr {2d}$,
\begin{equation}\label{secondtermestimate}
\begin{aligned}
& \left| \sum_{j=N+|\alpha|+1}^\infty h^{m_0-m_j} \pdd z \alpha \left( a_j(z/h;h) (1-\chi(\ep_j z/h)) \right) \right| \\
& \leq h^{m_0-m_N} \eabs{z}^{m_N-|\alpha|} \sum_{j=N+|\alpha|+1}^\infty 2^{-j} \\
& = 2^{-N-|\alpha|} h^{m_0-m_N} \eabs{z}^{m_N-|\alpha|}.
\end{aligned}
\end{equation}

Next we estimate the $\alpha$th derivative of the first term of \eqref{threeterms}, again using $\eabs{z} \leq C |z|$:
\begin{equation}\label{firsttermestimate}
\begin{aligned}
& \left| \sum_{j=0}^{N-1} h^{m_0-m_j} \pdd z \alpha \left( a_j(z/h;h) \chi(\ep_j z/h) \right) \right| \\
& \leq \sum_{j=0}^{N-1} h^{m_0-m_j} \sum_{\beta \leq \alpha} \binom{\alpha}{\beta} \left| \partial_z^{\alpha-\beta} (a_j(z/h;h) ) \right| \,
\left| (\ep_j /h)^{|\beta|} \pd \beta \chi(\ep_j z/h) \right| \\
& \leq h^{m_0-m_N} \eabs{z}^{m_N-|\alpha|} \sum_{j=0}^{N-1} \sum_{\beta \leq \alpha} C_{j,\beta} \, \ep_j^{m_N-m_j}
\left| z \ep_j /h \right|^{m_j-m_N+|\beta|}
\left|  \pd \beta \chi(\ep_j z/h) \right| \\
& \leq C_{N,\alpha} h^{m_0-m_N} \eabs{z}^{m_N-|\alpha|}, \quad z \in \rr {2d}.
\end{aligned}
\end{equation}

Finally we estimate the $\alpha$th derivative of the third term of \eqref{threeterms} as
\begin{equation}\label{thirdtermestimate}
\begin{aligned}
& \left| \sum_{j=N}^{N+|\alpha|} h^{m_0-m_j} \pdd z \alpha \Big( a_j(z/h;h) \, (1-\chi(\ep_j z/h)) \Big) \right| \\
& \leq h^{m_0-m_N} \sum_{j=N}^{N+|\alpha|} \sum_{0 \leq \beta \leq \alpha} \binom{\alpha}{\beta} \left| \partial_z^{\alpha-\beta} a_j(z/h;h) \right| \, \left| \pdd z \beta (1-\chi(\ep_j z/h) ) \right| \\
& \leq h^{m_0-m_N}  \eabs{z}^{m_N - |\alpha|} \sum_{j=N}^{N+|\alpha|} \left( C_{j,\alpha} + \sum_{0 < \beta \leq \alpha} C_\beta \left|\ep_j z /h \right|^{|\beta|} \left| \pd \beta \chi (\ep_j z/h) \right| \right) \\
& \leq C_{N,\alpha} h^{m_0-m_N}  \eabs{z}^{m_N-|\alpha|}, \quad z \in \rr {2d}.
\end{aligned}
\end{equation}

A combination of the estimates \eqref{secondtermestimate}, \eqref{firsttermestimate} and \eqref{thirdtermestimate} shows that there exists $C_{N,\alpha}>0$ such that \eqref{sufficientestimate1} holds.
This proves $a \sim \sum_0^\infty h^{m_0-m_j} a_j$, and $a \in G_h^{m_0}$ follows.
\end{proof}

\begin{prop}\label{regularize1}
If $u \in \cS'(\rr d)$ and $a \sim 0$ then
\begin{equation*}
\| a^w(x,D) u\|_{L^2} = \cO(h^\infty).
\end{equation*}
\end{prop}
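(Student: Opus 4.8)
The plan is to reduce the bound to an $L^2\to L^2$ operator-norm estimate, by writing $u$ in terms of an $L^2$ function via the structure theory of $\cS'(\rr d)$ and then exploiting that the hypothesis $a\sim 0$, i.e. $a\in\bigcap_{k\in\no}h^k G_h^{-k}$, makes the symbol negligible at every scale.

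\emph{Step 1 (reduction to $L^2$).} Every $u\in\cS'(\rr d)$ can be written as $u=H^N v$ with $N\in\no$ and $v\in L^2(\rr d)$, where $H=1-\Delta+|x|^2$: indeed $H^{-1}$ maps $L^2$ isomorphically onto the Shubin--Sobolev space $Q^2$, one has $\cS'(\rr d)=\bigcup_N Q^{-2N}$, and it suffices to pick $N$ with $u\in Q^{-2N}$ and set $v=H^{-N}u$ (see \cite{Shubin1,Nicola1}). The Weyl symbol of $H$ is exactly $\eabs{(x,\xi)}^2$, so $H^N=(h_N)^w(x,D)$ with $h_N$ the $N$-fold Weyl product of $\eabs{(x,\xi)}^2$ with itself, a \emph{polynomial} on $\rr {2d}$ of degree $2N$. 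Since $a\sim 0$ implies $a\in G_h^0$, and $\eabs{hz}\geq h\eabs{z}$ for $h\in(0,1]$ gives $|\partial_z^\alpha a(z;h)|\leq C_\alpha h^{|\alpha|}\eabs{hz}^{-|\alpha|}\leq C_\alpha\eabs{z}^{-|\alpha|}$, the symbol $a(\cdot;h)$ lies in $G^0$ uniformly in $h$; hence $a^w(x,D)$ is $L^2$-bounded, acts on $\cS'(\rr d)$, and $a^w(x,D)H^N=(a\wpr h_N)^w(x,D)$ by the Shubin calculus. Therefore
\[
\norm{a^w(x,D)u}_{L^2}=\norm{(a\wpr h_N)^w(x,D)v}_{L^2}\leq\norm{(a\wpr h_N)^w(x,D)}_{L^2\to L^2}\,\norm{v}_{L^2},
\]
so it remains to show the operator norm is $\cO(h^\infty)$.

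\emph{Step 2 (symbol estimate).} Since $h_N$ is a polynomial of degree $2N$, the Moyal expansion of $a\wpr h_N$ terminates, yielding a \emph{finite} sum
\[
a\wpr h_N=\sum_{|\alpha|\leq 2N}c_\alpha\,(\partial_z^\alpha a)(\cdot;h)\,q_\alpha,
\]
with each $q_\alpha$ a fixed polynomial of degree $\leq 2N$. Fix $M\in\no$. As $a\sim 0$ we may invoke $a\in h^{2N+M}G_h^{-(2N+M)}$, i.e. $|\partial_z^\gamma a(z;h)|\leq C_\gamma h^{2N+M+|\gamma|}\eabs{hz}^{-(2N+M)-|\gamma|}$, and we use $|\partial_z^\beta q_\alpha(z)|\lesssim\eabs{z}^{2N}\leq h^{-2N}\eabs{hz}^{2N}$. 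Expanding $\partial_z^\delta(a\wpr h_N)$ by the Leibniz rule, every resulting term is of the form $(\partial_z^\gamma a)(\cdot;h)(\partial_z^\beta q_\alpha)$ and is thus bounded by $C\,h^{M+|\gamma|}\eabs{hz}^{-M-|\gamma|}\leq C h^M$, the gain $h^{2N+M}$ in the symbol bound exactly compensating the loss $h^{-2N}$ from the polynomial weight. Hence $\norm{\partial_z^\delta(a\wpr h_N)}_{L^\infty(\rr {2d})}\leq C_{M,\delta}\,h^M$ for every $\delta$, and the Calder\'on--Vaillancourt theorem (cf. \cite{Folland1}) gives
\[
\norm{(a\wpr h_N)^w(x,D)}_{L^2\to L^2}\leq C\sum_{|\delta|\leq 2d+1}\norm{\partial_z^\delta(a\wpr h_N)}_{L^\infty}\leq C_M\,h^M.
\]
Combined with Step 1 this gives $\norm{a^w(x,D)u}_{L^2}\leq C_M h^M\norm{v}_{L^2}$, and since $M\in\no$ was arbitrary, $\norm{a^w(x,D)u}_{L^2}=\cO(h^\infty)$.

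The point requiring care is the bookkeeping in Step 2: one must check that the additional powers of $h$ built into the condition $a\sim 0$ via \eqref{shubinclassh} dominate the powers of $h^{-1}$ that appear when the polynomial growth of $h_N$ (equivalently, of the $q_\alpha$ and their derivatives) is converted into growth of $\eabs{hz}$. This is precisely where the inequality $\eabs{z}\leq h^{-1}\eabs{hz}$ and the exact exponents in the $G_h^m$ estimates are used; the rest is routine. An essentially equivalent route avoids the structure theorem: write $\norm{a^w(x,D)u}_{L^2}=\sup_{\norm{g}_{L^2}=1}|(u,\overline{a}^w(x,D)g)|$, bound a fixed seminorm $\rho_{m,k}(\overline{a}^w(x,D)g)$ by a high Shubin--Sobolev norm of $\overline{a}^w(x,D)g$, and control the latter by $\cO(h^\infty)\norm{g}_{L^2}$ using the same symbol estimates after composing on the left with a power of $H$.
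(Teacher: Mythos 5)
Your argument is correct, and it reaches the conclusion by a route that differs in execution from the paper's while sharing the same skeleton. Both proofs start from the Shubin--Sobolev characterization of $\cS'(\rr d)$ (namely $\cS'=\bigcup_s Q^s$) and both hinge on the same trade-off, converting the arbitrarily many powers of $h$ supplied by $a\sim 0$ against the loss coming from $\eabs{z}\leq h^{-1}\eabs{hz}$. The paper, however, does not factor $u$ through $H^N$: it shows directly that $a\in\bigcap_{N} h^N G_h^{-N}$ implies $a\in\bigcap_{N} h^N G^{-N}$ with seminorms uniform in $h$ (splitting into $|z|\leq 1$ and $|z|\geq 1$ and using $\eabs{hz}^2\geq 2h|z|$), writes $a=h^N b$ with $b\in G^{-N}$, and then simply quotes the continuity $b^w(x,D):Q^s\to Q^{s+N}\subseteq L^2$ for $N\geq -s$ from the Shubin calculus. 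That avoids both the composition $a\wpr h_N$ --- in particular the exactness of the terminating Moyal expansion against the polynomial symbol $h_N$, which you use correctly but must justify --- and the Calder\'on--Vaillancourt theorem; in exchange it relies on the mapping properties of ordinary Shubin operators on the $Q^s$ scale as a black box. Your version is more hands-on: once $u=H^Nv$ with $v\in L^2$ is granted, everything is reduced to an explicit $L^\infty$ symbol estimate plus Calder\'on--Vaillancourt, with the $h$-bookkeeping ($h^{2N+M+|\gamma|}$ gained versus $h^{-2N}$ lost) done by hand, and your constants are visibly uniform in $h$. Your closing duality remark is a third viable variant, but the published proof is the symbol-class embedding just described.
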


\begin{proof}
By the assumption $u \in \cS'(\rr d)$ and \cite[Corollary~25.2]{Shubin1}, there exists $s \in \ro$ such that $u \in Q^s(\rr d)$.
Here $Q^s(\rr d)$ denotes a Sobolev type space defined by the norm
\begin{equation*}
\| u \|_{Q^s} = \| A_s u \|_{L^2}
\end{equation*}
where $A_s$ is the localization (or anti-Wick) operator (cf. \cite{Nicola1,Shubin1}), defined by
\begin{equation*}
(A_s u,g) = (2 \pi)^{-d} (v_s V_\fy u, V_\fy g), \quad g \in \cS(\rr d), \quad u \in \cS'(\rr d),
\end{equation*}
where $\fy \in \cS$ and $\| \fy \|_{L^2}=1$, with symbol $v_s (z)=\eabs{z}^s$, $z \in \rr {2d}$ (cf. \cite[Definition~1.5.2 and Proposition~1.7.12]{Nicola1}). The Sobolev--Shubin space $Q^s(\rr d)$ equals the modulation space $M_{v_s}^2(\rr d)$, see \cite{Boggiatto1},  \cite{Grochenig1} and \cite[Corollary~1.7.17]{Nicola1}. 

By assumption we have
$a \in \cap_{N \geq 0} h^N G_h^{-N}$, which implies $a \in \cap_{N \geq 0} h^N G^{-N}$.
In fact, let $N \in \no$. For $|z| \leq 1$ we have
\begin{equation*}
\eabs{z}^{N +|\alpha|} |\pd \alpha a(z;h)| \leq C_{N,\alpha} h^{N+|\alpha|} \eabs{ h z}^{-N-|\alpha|}
\leq C_{N,\alpha} h^N,
\end{equation*}
and for $|z| \geq 1$ we have, using $\eabs{hz}^2 \geq 2 h |z|$,
\begin{align*}
|\pd \alpha a(z;h)|
& \leq C_{N,\alpha} h^{2N+2|\alpha|} \eabs{ h z}^{-2N-2|\alpha|} \\
& \leq C_{N,\alpha} h^{N+|\alpha|} |z|^{-N-|\alpha|} \\
& \leq C_{N,\alpha} h^{N} \eabs{z}^{-N-|\alpha|}.
\end{align*}

Thus for any $N \in \no$ we have $a=h^N b$ where $b \in G^{-N}$.
If we pick $N \geq -s$ then by
\cite[Proposition~25.4]{Shubin1} (cf. \cite[Proposition~1.5.5 and Theorem~2.1.10]{Nicola1}), since $Q^0(\rr d)=L^2(\rr d)$,
\begin{equation*}
\| b^w(x,D) u \|_{L^2} \leq C \| u \|_{Q^s} < \infty,
\end{equation*}
which implies
\begin{equation*}
\| a^w(x,D) u \|_{L^2} \lesssim h^N, \quad h \in (0,1], \quad N \in \no.
\end{equation*}
\end{proof}

\subsection{Weyl product and change of quantization}

In the next theorem we show the asymptotic expansion formula for the Weyl product for the calculus
of $h$-Shubin symbols.
Here we use the notation $D=(D_y, D_\eta; D_t,D_\theta)$ for $(-i)$ times the gradient with respect to $(y,\eta; t,\theta) \in \rr {4d}$, and
\begin{equation*}
\sigma(D) = D_\eta \cdot D_t - D_y \cdot D_\theta
= \sum_{k=1}^d \partial_{y_k} \partial_{\theta_k} - \partial_{\eta_k} \partial_{t_k}
\end{equation*}
for the symplectic form $\sigma$ with gradient arguments.

\begin{thm}\label{calculuscomp0}
If $a \in G_h^m$ and $b \in G_h^n$ then $a \wpr b \in G_h^{m+n}$ and
\begin{equation}\label{calculuscomp1}
a \wpr b(z)  \sim \sum_{j=0}^{\infty} h^{2j} \frac{\left( i \sigma(D)/2 \right)^j}{j!} \, (a \otimes b)_{h^{-1}} \big|_{h(z,z)}.
\end{equation}
\end{thm}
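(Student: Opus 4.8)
The plan is to adapt the classical derivation of the Weyl product asymptotic expansion (see \cite[Chapter~18.5]{Hormander0}; the underlying oscillatory integral representation is the one already used in the proof of Main Lemma~\ref{L2estimate}) to the class $G_h^m$, the one new point being that $\pdd z\alpha$ maps $G_h^m$ into $h^{|\alpha|}G_h^{m-|\alpha|}$, which is what produces the powers $h^{2j}$ in \eqref{calculuscomp1}. Starting from
\[
a\wpr b(z) = \pi^{-2d}\iint a(z+u;h)\,b(z+u';h)\,e^{-2i\sigma(u,u')}\,\dd u\,\dd u', \qquad z\in\rr{2d},
\]
where $\sigma(u,u')=u_\xi\cdot u'_x-u_x\cdot u'_\xi$ is the symplectic form and the integral is understood after the usual regularization, I would Taylor expand $a(z+\cdot\,;h)$ and $b(z+\cdot\,;h)$ at the origin to order $M$. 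Inserting the two Taylor polynomials, the $u$-integration against a monomial followed by the $u'$-integration produces matched derivatives evaluated on the diagonal, and the resulting ``diagonal part'' collapses to $\sum_{j<M}\frac{1}{j!}\bigl(i\sigma(D)/2\bigr)^j(a\otimes b)\big|_{z_1=z_2=z}$; since each summand carries $j$ derivatives on the $a$-factor and $j$ on the $b$-factor it lies in $h^{2j}G_h^{m+n-2j}$, and it equals precisely $h^{2j}\frac{(i\sigma(D)/2)^j}{j!}(a\otimes b)_{h^{-1}}\big|_{h(z,z)}$, i.e.\ the $j$-th term of \eqref{calculuscomp1}, with $m_j=m+n-2j$ and $\wt m_0-m_j=2j$ in the notation of Definition~\ref{asymptoticexpansion1}.

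The remaining — and main — work is the remainder estimate. The polynomial-times-Taylor-remainder cross terms vanish: the oscillatory $u$-integration against a monomial $u^\gamma$ with $|\gamma|<M$ yields exactly $|\gamma|$ derivatives, evaluated at the origin, of the Taylor remainder of the other symbol, which vanishes there to order $M>|\gamma|$. Hence the genuine remainder after $M$ terms is $R_M(z;h)=\pi^{-2d}\iint r_M^a(z,u;h)\,r_M^b(z,u';h)\,e^{-2i\sigma(u,u')}\,\dd u\,\dd u'$ with $r_M^a,r_M^b$ the $M$-th order integral Taylor remainders. From \eqref{shubinclassh} and Peetre's inequality $\eabs{h(z+\theta u)}\le\sqrt2\,\eabs{hz}\eabs{hu}\le\sqrt2\,\eabs{hz}\eabs{u}$ (for $0\le\theta\le1$, $0<h\le1$) one gets $|r_M^a(z,u;h)|\lesssim h^M\eabs{u}^{2M-m}\eabs{hz}^{m-M}$, and similarly for $r_M^b$, so the amplitude of $R_M$ is bounded by $h^{2M}\eabs{hz}^{m+n-2M}\eabs{u}^{2M-m}\eabs{u'}^{2M-n}$. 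To render the integral absolutely convergent I would integrate by parts using (the transpose of) a vector field of the type $\dfrac{\overline{(u,u')}\cdot\nabla_{(u,u')}}{|u|^2+|u'|^2}$ — exactly the device of Lemma~\ref{Kestimate} — away from the origin, handling a fixed neighbourhood of the origin directly; each application gains a factor $\eabs{(u,u')}^{-2}$ when the derivative hits the coefficient, and when it instead hits $r_M^a$ or $r_M^b$ it costs only one extra power of $\eabs{u}$ or $\eabs{u'}$ while trading one more $u$-derivative of the symbol for an extra factor $h\,\eabs{hz}^{-1}$. Thus sufficiently many integrations by parts make the integral converge without ever lowering the power of $h$ below $h^{2M}$, giving $|R_M(z;h)|\lesssim h^{2M}\eabs{hz}^{m+n-2M}$; applying $\pdd z\alpha$ before the estimate replaces $r_M^a$ by $r_M^{\partial a}$, again trading each $z$-derivative for a factor $h\,\eabs{hz}^{-1}$, so that $|\pdd z\alpha R_M(z;h)|\lesssim h^{2M+|\alpha|}\eabs{hz}^{m+n-2M-|\alpha|}$, i.e.\ $R_M\in h^{2M}G_h^{m+n-2M}$, which is condition \eqref{restterm1}.

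Taking $M=1$ — so $a\wpr b=ab+R_1$ with $ab\in G_h^{m+n}$ and $R_1\in h^2G_h^{m+n-2}\subseteq G_h^{m+n}$ — proves $a\wpr b\in G_h^{m+n}$, and letting $M\to\infty$ gives the asymptotic expansion \eqref{calculuscomp1} in the sense of Definition~\ref{asymptoticexpansion1}. The main obstacle I anticipate is the bookkeeping in the remainder estimate: arranging the integrations by parts so that they simultaneously (i) provide enough decay in $(u,u')$ for absolute convergence, (ii) never consume the gained power $h^{2M}$, and (iii) via Peetre's inequality land exactly at order $m+n-2M$ with respect to $\eabs{hz}$ rather than $\eabs{z}$. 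Finally, it is worth noting the semiclassical reading of the mechanism: after the substitution $z\mapsto z/h$ the phase above becomes $e^{-2i\sigma(u,u')/h^2}$, so $h^2$ is the effective semiclassical parameter, which is the structural reason for $h^{2j}$ rather than $h^j$ in \eqref{calculuscomp1}.
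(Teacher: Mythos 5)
Your argument is correct in outline, but it follows a genuinely different route from the paper. The paper starts from the Fourier-multiplier form of the Weyl product, $a \wpr b(z)=\exp\bigl(\tfrac i2 h^2\sigma(D)\bigr)(a\otimes b)_{h^{-1}}\big|_{h(z,z)}$, Taylor expands the \emph{exponential multiplier} with integral remainder, and estimates the remainder by Fourier-analytic $L^1$--$L^\infty$ arguments that yield bounds in terms of finitely many $G^{m+1}$, $G^{n+1}$ seminorms (estimate \eqref{restestimate1}); this seminorm dependence is then exploited in a second step, where general symbols in $G_h^m$, $G_h^n$ are reached from Schwartz symbols by truncation, dominated convergence and duality against Wigner distributions via \eqref{weylwigner}. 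You instead work with the twisted-convolution integral representation, Taylor expand the \emph{symbols}, use the vanishing of the polynomial-times-remainder cross terms to identify the error as a remainder-times-remainder oscillatory integral, and estimate it by Peetre's inequality plus nonstationary-phase integration by parts in the spirit of Lemma \ref{Kestimate}; this is closer to the classical composition proofs, makes the origin of the factors $h^{2j}$ in \eqref{calculuscomp1} transparent (each matched pair of derivatives costs $h^2$), and in principle avoids the Schwartz-then-approximate step, at the price that every manipulation (the splitting into four pieces, the vanishing of the cross terms, the interchange of $\partial_z^\alpha$ with the regularized integral) must be justified as an oscillatory integral with polynomially growing amplitude, uniformly in $h\in(0,1]$ -- this plays the role that the paper's approximation argument plays, and is where most of the remaining work lies. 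Two small points to fix in the write-up: the phase here is the real bilinear form $-2\sigma(u,u')$, so the correct integration-by-parts vector field has coefficients proportional to the gradient of the phase, i.e.\ a symplectic rotation with $u$ and $u'$ interchanged, divided by $|u|^2+|u'|^2$ (not literally $\overline{(u,u')}$ as in Lemma \ref{Kestimate}, whose operator was tailored to a phase with imaginary quadratic part); and when a derivative falls on $r_M^a$ or $r_M^b$ the growth degree in $u$ resp.\ $u'$ does not actually increase while an extra factor $h\eabs{hz}^{-1}\le 1$ is gained, so the homogeneity count ($N$ integrations by parts gain $\eabs{(u,u')}^{-N}$ against the fixed polynomial growth $\eabs{u}^{2M-m}\eabs{u'}^{2M-n}$) closes exactly as you intend and never degrades the powers $h^{2M}\eabs{hz}^{m+n-2M}$.
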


\begin{proof}
For $a,b \in \cS(\rr {2d})$ we have by \cite[Eq.~(18.5.6)]{Hormander0} and a change of variables
\begin{equation}\label{weylproduct1}
a \wpr b (z) = \exp \left( \frac{i}{2} h^{2} \sigma(D) \right) (a \otimes b)_{h^{-1}} \big|_{h(z,z)}
\end{equation}
where $\exp( i h^{2} \sigma(D)/2)$ is defined as the Fourier multiplier operator with symbol $\exp( i h^{2} \sigma/2)$.

First we prove the estimate for $a \in G^m$, $b \in G^n$ and $N \geq \max(m,n)+2d+5$
\begin{equation}\label{restestimate1}
\begin{aligned}
& \left| \left( \exp \left( \frac{i}{2} h^{2} \sigma(D) \right)  - \sum_{j=0}^{N-1} \frac{\left(i h^2 \sigma(D)/2 \right)^j}{j!} \,   \right) (a \otimes b) (z,w)\right| \\
& \leq C_{N} h^{2N} \eabs{z}^{m+2d+5-N} \eabs{w}^{n+2d+5-N}, \quad z,w \in \rr {2d}.
\end{aligned}
\end{equation}
To show \eqref{restestimate1} we start with $a,b \in \cS(\rr {2d})$.
From the Taylor expansion, $Y \in \rr {4d}$,
\begin{align*}
e^{i h^{2} \sigma(Y)/2} & =  \sum_{j=0}^{N-1} \frac{\left(i h^2 \sigma(Y) /2 \right)^j}{j!} \\
& \qquad + \frac1{(N-1)!} \int_0^1 (1-t)^{N-1} e^{ i t h^{2} \sigma(Y)/2}  \left(i h^2 \sigma(Y) /2\right)^N dt,
\end{align*}
we obtain
\begin{equation}\label{restestimate2}
\begin{aligned}
& \left( \exp \left( \frac{i}{2} h^{2} \sigma(D) \right)  - \sum_{j=0}^{N-1} \frac{\left( i h^2 \sigma(D)/2 \right)^j}{j!} \,   \right) (a \otimes b) (X) \\
& = \frac1{(N-1)!} \int_0^1 (1-t)^{N-1} e^{ i t h^{2} \sigma(D)/2}  \left(i h^2 \sigma(D) /2\right)^N (a \otimes b) (X) \, dt, \quad X \in \rr {4d}.
\end{aligned}
\end{equation}

Next we use
\begin{equation}\label{japanese1}
\eabs{z}^{2k} \lesssim \max_{|\alpha| \leq 2 k} |z^\alpha|, \quad z \in \rr {2d}, \quad k \in \no,
\end{equation}
\begin{equation}\label{japanese2}
\| f \|_{L^1(\rr {2d})} = \| \eabs{\cdot}^{-2d-2} \eabs{\cdot}^{2d+2} f \|_{L^1(\rr {2d})}
\lesssim \| \eabs{\cdot}^{2d+2} f \|_{L^\infty(\rr {2d})},
\end{equation}
and, since $N \geq \max(m,n)+2d+5$, the existence of nonnegative integers $M$ and $L$ such that
\begin{equation}\label{exponent1}
\begin{aligned}
N-m-2d-5 & \leq 2 M \leq N-m-2d-3, \\
N-n-2d-5 & \leq 2 L \leq N-n-2d-3.
\end{aligned}
\end{equation}

This gives the following estimate for $z,w \in \rr {2d}$, writing the variable of $a \otimes b$ as $X=(x_1,x_2)$, $x_1,x_2 \in \rr {2d}$,
\begin{align*}
& \eabs{z}^{2M} \eabs{w}^{2L} | e^{ i t h^{2} \sigma(D)/2} \sigma(D)^N (a \otimes b) (z,w)| \\
& \stackrel{\eqref{japanese1}}{\lesssim} \max_{|\alpha_1| \leq 2 M, \  |\alpha_2| \leq 2 L} \left| z^{\alpha_1} w^{\alpha_2}  \int e^{i (z,w) \cdot Y} e^{ i t h^{2} \sigma(Y)/2} \cF \left( \sigma(D)^N (a \otimes b) \right) (Y) dY \right| \\
& \lesssim \max_{\stackrel{|\alpha_1| \leq 2 M, \  |\alpha_2| \leq 2 L}{\small \beta_1 \leq \alpha_1, \ \beta_2 \leq \alpha_2}} \left| \int e^{i (z,w) \cdot Y} (-D_Y)^{\alpha_1-\beta_1,\alpha_2-\beta_2} (e^{ i t h^{2} \sigma(Y)/2}) \right. \\
& \left. \qquad \qquad \qquad \qquad \qquad \qquad \qquad \times \cF \left( x_1^{\beta_1} x_2^{\beta_2} \sigma(D)^N (a \otimes b) \right) (Y) \, dY \right|.
\end{align*}

Note that $(-D_Y)^{\alpha_1-\beta_1,\alpha_2-\beta_2} (e^{ i t h^{2} \sigma(Y)/2})=p(Y) (e^{ i t h^{2} \sigma(Y)/2})$ where $p$ is a polynomial such that $\deg(p) \leq 2(M+L)$ with coefficients that contain powers of $t h^2$. These powers are uniformly bounded by one when $t \in [0,1]$ and $h \in (0,1]$.
Combined with the facts $\eabs{\cdot}^{-4d-2}, \eabs{\cdot}^{-2d-2} \otimes \eabs{\cdot}^{-2d-2} \in L^1(\rr {4d})$ this gives
\begin{equation}\label{restestimate2b}
\begin{aligned}
& \sup_{z,w \in \rr {2d}} \eabs{z}^{2 M} \eabs{w}^{2 L} | e^{ i t h^{2} \sigma(D)/2} \sigma(D)^N (a \otimes b) (z,w)| \\
& \lesssim \sup_{\stackrel{|\beta_1| \leq 2M, \  |\beta_2| \leq 2 L}{\small |\gamma| \leq 2(M+L+2d+1), \ Y \in \rr {4d}}} \left| \cF (D^{\gamma} (x_1^{\beta_1} x_2^{\beta_2} \sigma(D)^N (a \otimes b))) (Y) \right| \\
& \leq \sup_{\stackrel{|\beta_1| \leq 2M, \  |\beta_2| \leq 2 L}{\small |\gamma| \leq 2(M+L+2d+1)}} \left\| D^{\gamma} (x_1^{\beta_1} x_2^{\beta_2} \sigma(D)^N (a \otimes b))) \right\|_{L^1(\rr {4d})} \\
& \stackrel{\eqref{japanese2}}{\lesssim} \sup_{\stackrel{|\beta_1| \leq 2M, \  |\beta_2| \leq 2 L, \ x_1,x_2 \in \rr {2d}}{\small |\gamma| \leq 2(M+L+2d+1)}} \eabs{x_1}^{2d+2} \eabs{x_2}^{2d+2} \left| D^{\gamma} (x_1^{\beta_1} x_2^{\beta_2} \sigma(D)^N (a \otimes b)(x_1,x_2)) \right|.
\end{aligned}
\end{equation}

In the following we use the notation
\begin{equation*}
\| a \|_{m,k} = \sup_{z \in \rr {2d}, \ |\gamma|=k} \eabs{z}^{k - m} | \pd \gamma a(z)|, \quad k \in \no,
\end{equation*}
for the seminorms of $G^m$.
Then \eqref{restestimate2b} can be estimated as
\begin{align*}
& \sup_{z,w \in \rr {2d}} \eabs{z}^{2 M} \eabs{w}^{2 L} | e^{ i t h^{2} \sigma(D)/2} \sigma(D)^N (a \otimes b) (z,w)| \\
& \lesssim \sup_{x_1,x_2 \in \rr {2d}} \eabs{x_1}^{2d+2+2M} \eabs{x_2}^{2d+2+2L} \sum_{N \leq |\alpha_1|, \, |\alpha_2| \leq N + 2(M+L+2d+1)} |\partial^{\alpha_1} a(x_1) \partial^{\alpha_2} b(x_2) | \\
& \lesssim \sup_{x_1,x_2 \in \rr {2d}} \eabs{x_1}^{2d+2+2M+m+1-N} \eabs{x_2}^{2d+2+2L+n+1-N} \\
& \qquad \qquad \times \max_{N \leq k \leq N + 2(M+L+2d+1)} \| a \|_{m+1,k} \max_{N \leq k \leq N + 2(M+L+2d+1)} \| b \|_{n+1,k} \\
& \leq \max_{N \leq k \leq N + 2(M+L+2d+1)} \| a \|_{m+1,k} \max_{N \leq k \leq N + 2(M+L+2d+1)} \| b \|_{n+1,k}.
\end{align*}

Inserted into \eqref{restestimate2} this yields the estimate for $a,b \in \cS(\rr {2d})$ and $N \geq \max(m,n)+2d+5$
\begin{equation}\label{restestimate3}
\begin{aligned}
& \left| \left( \exp \left( \frac{i}{2} h^{2} \sigma(D) \right)  - \sum_{j=0}^{N-1} \frac{\left(i h^2 \sigma(D)/2 \right)^j}{j!} \,   \right) (a \otimes b) (z,w) \right| \\
& \lesssim C_N h^{2N} \eabs{z}^{m+2d+5-N}  \eabs{w}^{n+2d+5-N} \\
& \qquad \qquad \times \max_{N \leq k \leq N + 2(M+L+2d+1)} \| a \|_{m+1,k} \max_{N \leq k \leq N + 2(M+L+2d+1)} \| b \|_{n+1,k}.
\end{aligned}
\end{equation}

In the next step of the proof we let $a \in G^m$, $b \in G^n$ and approximate $a$ and $b$ as $a_\ep(z)=a(z) \chi(\ep z)$ and $b_\ep(z)=b(z) \chi(\ep z)$ where $\chi \in C_c^\infty(\rr {2d})$, $\chi(z)=1$ for $|z| \leq 1$, $\chi(z)=0$ for $|z| \geq 2$ and $\ep>0$. Then $a_\ep, b_\ep \in \cS(\rr {2d})$, $a_\ep \rightarrow a$ in $G^{m+1}$ and $b_\ep \rightarrow b$ in $G^{n+1}$ as $\ep \rightarrow 0$.
It follows from \eqref{restestimate3} that
for any $z,w \in \rr{2d}$
\begin{equation}\label{pointwise1}
\exp \left( \frac{i}{2} h^{2} \sigma(D) \right) (a \otimes b)(z,w)
= \lim_{\ep \rightarrow 0} \exp \left( \frac{i}{2} h^{2} \sigma(D) \right) (a_\ep \otimes b_\ep)(z,w).
\end{equation}
Furthermore it follows that \eqref{restestimate1} holds for $a \in G^m$, $b \in G^n$ and $N \geq \max(m,n)+2d+5$.

Let $a \in G_h^m$ and $b \in G_h^n$.
With $\chi \in C_c^\infty(\rr {2d})$ as above and
$a_\ep(z) = a(z) \chi(\ep h z)$ and $b_\ep(z)=b(z) \chi(\ep h z)$
we have $a_\ep \rightarrow a$ in $G_h^{m+1}$ and $b_\ep \rightarrow b$ in $G_h^{n+1}$ as $0<\ep \rightarrow 0$.
For $u \in \cS$ it follows that $a_\ep^w(x,D) u \rightarrow a^w(x,D) u$ in $\cS$, which implies
$$
a_\ep^w(x,D) \ b_\ep^w(x,D) u \rightarrow a^w(x,D) b^w(x,D) u \quad \mbox{in $\cS'$ as $\ep \rightarrow 0$}.
$$
Since $(a_\ep)_{h^{-1}} \rightarrow a_{h^{-1}}$ in $G^{m+1}$ and
$(b_\ep)_{h^{-1}} \rightarrow b_{h^{-1}}$ in $G^{n+1}$ we obtain from
\eqref{weylwigner}, \eqref{weylproduct1}, \eqref{pointwise1} and dominated convergence, for $u,v \in \cS(\rr d)$,
\begin{align*}
(a^w(x,D) \, b^w(x,D) \, u, v) & = (2 \pi)^{-d} \lim_{\ep \rightarrow 0} (a_\ep \wpr b_\ep ,W(v,u) ) \\
& = (2 \pi)^{-d} \lim_{\ep \rightarrow 0} \left( e^{i h^{2} \sigma(D)/2} (a_\ep \otimes b_\ep)_{h^{-1}} \big|_{h(\cdot,\cdot)} ,W(v,u) \right) \\
& = (2 \pi)^{-d} \left( e^{i h^{2} \sigma(D)/2} (a \otimes b)_{h^{-1}} \big|_{h(\cdot,\cdot)} ,W(v,u) \right).
\end{align*}
It follows that \eqref{weylproduct1} extends to $a \in G_h^m$ and $b \in G_h^n$.

Finally let $a \in G_h^m$ and $b \in G_h^n$, let $N \geq 1$ be arbitrary and let $\wt N \geq \max(m,n,N)+2d+5$.
Since derivatives commute with the operator $\exp \left( i h^{2} \sigma(D)/2 \right)$, \eqref{weylproduct1} and \eqref{restestimate1} yield the estimate
\begin{equation}\label{restestimate4}
\begin{aligned}
& \left| \pdd z \alpha \left( a \wpr b (z) - \sum_{j=0}^{\wt N-1} \frac{\left( i h^2\sigma(D)/2 \right)^j}{j!} \, (a \otimes b)_{h^{-1}} \big|_{h(z,z)} \right) \right| \\
& \leq C_{N,\alpha} h^{2 \wt N+|\alpha|} \eabs{h z}^{m+n+4d+10-2 \wt N-|\alpha|} \\
& \leq C_{N,\alpha} h^{2 N+|\alpha|} \eabs{h z}^{m+n-2N-|\alpha|}, \quad z \in \rr {2d}.
\end{aligned}
\end{equation}
We write
\begin{equation*}
\begin{aligned}
a \wpr b (z) - & \sum_{j=0}^{N-1} \frac{\left( i h^2 \sigma(D)/2 \right)^j}{j!} (a \otimes b)_{h^{-1}} \big|_{h(z,z)}  \\
& = a \wpr b (z) - \sum_{j=0}^{\wt N-1} \frac{\left( i h^2 \sigma(D)/2 \right)^j}{j!} (a \otimes b)_{h^{-1}} \big|_{h(z,z)}  \\
& + \sum_{j=N}^{\wt N-1} \frac{\left( i h^2 \sigma(D)/2 \right)^j}{j!} (a \otimes b)_{h^{-1}} \big|_{h(z,z)}
\end{aligned}
\end{equation*}
and observe that the second sum of the right hand side belongs to $h^{2N}G_h^{m+n-2N}$.
Combined with \eqref{restestimate4}  this proves \eqref{calculuscomp1}, and
it follows that $a \wpr b \in G_h^{m+n}$.
\end{proof}

The next result treats invariance of the symbol class $G_h^m$ with respect to a switch from the Kohn--Nirenberg  to the Weyl quantization,
and the corresponding asymptotic expansion.
Here we use the fact that for $a,b \in \cS'(\rr {2d})$ and $b^w(x,D)=a(x,D)$ we have
\begin{align*}
b(x,\xi) & = \exp\left(-\frac{i}{2} D_x \cdot D_\xi \right) a(x,\xi) \\
& = \exp\left(-\frac{i}{2} h^2 D_x \cdot D_\xi \right) (a)_{h^{-1}} \big|_{(hx,h\xi)}
\end{align*}
(cf. \cite[Chapter 18.5]{Hormander0}).
The proof is omitted since it is analogous to the proof of Theorem \ref{calculuscomp0}.

\begin{thm}\label{KNweyl}
If $a \in G_h^m$ is a Kohn--Nirenberg symbol then the corresponding Weyl symbol $b$
defined by $b^w(x,D)=a(x,D)$ satisfies $b \in G_h^m$ and
\begin{equation*}
b (x,\xi) \sim \sum_{j=0}^\infty h^{2j} \frac{\left( -\frac{i}{2} D_x \cdot D_\xi \right)^j}{j!} (a)_{h^{-1}} \big|_{(hx,h\xi)}.
\end{equation*}
\end{thm}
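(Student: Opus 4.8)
The plan is to mirror the proof of Theorem~\ref{calculuscomp0}, replacing the bilinear form $\sigma(D)$ on $\rr{4d}$ by the form $D_x\cdot D_\xi$ on $\rr{2d}$ and the tensor product $a\otimes b$ by the single symbol $a$; throughout write $z=(x,\xi)$. The starting point is the exact identity recalled just before the statement,
\begin{equation*}
b(z)=\exp\!\left(-\tfrac i2 h^2 D_x\cdot D_\xi\right)(a)_{h^{-1}}\big|_{hz},
\end{equation*}
valid first for $a\in\cS(\rr{2d})$, where $\exp(-\tfrac i2 h^2 D_x\cdot D_\xi)$ is the Fourier multiplier with symbol $\exp(-\tfrac i2 h^2\,\eta_1\cdot\eta_2)$, with $(\eta_1,\eta_2)\in\rr d\times\rr d$ the Fourier variable dual to $(x,\xi)$. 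As in Theorem~\ref{calculuscomp0}, everything reduces to a remainder estimate for this multiplier together with two density arguments.

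First I would establish, for an ordinary Shubin symbol $c\in G^m$ and $N$ large (say $N\ge m+2d+5$), the bound
\begin{equation*}
\left|\left(\exp\!\left(-\tfrac i2 h^2 D_x\cdot D_\xi\right)-\sum_{j=0}^{N-1}\frac{(-\tfrac i2 h^2 D_x\cdot D_\xi)^j}{j!}\right)c(z)\right|\le C_N\,h^{2N}\,\eabs{z}^{m+2d+5-2N},\quad z\in\rr{2d},
\end{equation*}
with $C_N$ depending only on finitely many $G^m$-seminorms of $c$. Copying the argument for \eqref{restestimate1}: Taylor-expand $s\mapsto e^{-\frac i2 h^2 s}$ with integral remainder of order $N$, so the left-hand side equals, up to a constant, $\int_0^1(1-t)^{N-1}e^{-\frac i2 t h^2 D_x\cdot D_\xi}(-\tfrac i2 h^2 D_x\cdot D_\xi)^N c\,dt$; multiply by $\eabs{z}^{2M}$ with $2M$ of size roughly $N-m-2d-5$, use \eqref{japanese1} to convert powers of $z$ into $\eta$-derivatives of the Fourier transform, integrate by parts (the $\eta$-derivatives falling on $e^{-\frac i2 t h^2\eta_1\cdot\eta_2}$ produce polynomials in $\eta$ whose coefficients are powers of $t h^2\le1$), and close with \eqref{japanese2} and $\eabs{\cdot}^{-2d-2}\in L^1(\rr{2d})$. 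The only structural difference from Theorem~\ref{calculuscomp0} is that $D_x\cdot D_\xi$ is a \emph{second-order} operator, so $(D_x\cdot D_\xi)^N c$ decays like $\eabs{z}^{m-2N}$ for $c\in G^m$; this is precisely the source of the $h^{2j}$ weights in the claimed expansion. One then upgrades from $c\in\cS(\rr{2d})$ to $c\in G^m$ by the cutoff approximation $c_\ep(z)=c(z)\chi(\ep z)$, verbatim as in the passage to \eqref{pointwise1}.

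Second, I would pass to $a\in G_h^m$. With $a_\ep(z)=a(z)\chi(\ep h z)$ one has $a_\ep\to a$ in $G_h^{m+1}$ and $(a_\ep)_{h^{-1}}\to(a)_{h^{-1}}$ in $G^{m+1}$ as $0<\ep\to0$; letting $b_\ep$ be defined by $b_\ep^w(x,D)=a_\ep(x,D)$, one has $a_\ep(x,D)u\to a(x,D)u$ in $\cS$ for $u\in\cS$, hence $b_\ep^w(x,D)u\to a(x,D)u$ in $\cS'$. Using \eqref{weylwigner}, the exact identity for the pair $(a_\ep,b_\ep)$, and dominated convergence (justified by the first-step bound applied to $(a_\ep)_{h^{-1}}$), one passes to the limit and concludes that the Fourier-multiplier formula for $b$ holds for every $a\in G_h^m$; in particular $b\in C^\infty(\rr{2d}\times(0,1])$. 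Finally, since $\partial^\alpha$ commutes with the multiplier, differentiating and applying the first-step bound to the symbol $c=(a)_{h^{-1}}\in G^m$ (uniformly in $h$, by the equivalent description of $G_h^m$ noted after its definition), with the dilation $z\mapsto hz$ turning $\eabs{z}$ into $\eabs{hz}$, gives for $\wt N\ge\max(m,N)+2d+5$
\begin{equation*}
\left|\pd\alpha\!\left(b(z)-\sum_{j=0}^{\wt N-1}\frac{(-\tfrac i2 h^2 D_x\cdot D_\xi)^j}{j!}(a)_{h^{-1}}\big|_{hz}\right)\right|\le C_{N,\alpha}\,h^{2N+|\alpha|}\,\eabs{hz}^{m-2N-|\alpha|},
\end{equation*}
and splitting off the finitely many terms with $N\le j\le\wt N-1$ (each of which lies in $h^{2N}G_h^{m-2N}$, since $(D_x\cdot D_\xi)^j$ maps $G^m$ into $G^{m-2j}$) yields both $b\in G_h^m$ and the asserted asymptotic expansion. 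I expect the main difficulty to be not conceptual---the blueprint of Theorem~\ref{calculuscomp0} transfers directly---but careful bookkeeping: keeping track of the powers of $h$ through the dilation $z\mapsto z/h$, checking that the constant in the first-step estimate genuinely depends only on finitely many seminorms so that it survives the limit $\ep\to0$, and verifying the domination hypothesis in the convergence step.
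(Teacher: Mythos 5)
Your proposal is correct and follows essentially the same route as the paper, which omits the proof of Theorem~\ref{KNweyl} precisely because it is the verbatim analogue of the proof of Theorem~\ref{calculuscomp0} (Taylor remainder for the Fourier multiplier, seminorm-controlled estimate via \eqref{japanese1}--\eqref{japanese2}, density/cutoff passage from $\cS$ to $G^m$ and then to $G_h^m$, and splitting off the terms $N\le j\le \wt N-1$). The only caveat is a minor bookkeeping slip: since $(D_x\cdot D_\xi)^N$ lowers the Shubin order by $2N$, the weight should be taken with $2M$ of size roughly $2N-m-2d-5$ to match your claimed exponent $m+2d+5-2N$, but this does not affect the argument.
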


\subsection{An invariance result for the homogeneous wave front set}

The following lemma shows that a symbol in $G_h^0$, bounded from below outside an $1/h$-dilated neighborhood of the origin, admits a symbol acting as a parametrix with respect to a symbol compactly supported outside the neighborhood of the origin (in analogy with \cite[Theorem 2.3.3]{Cordes}).

\begin{lem}\label{parametrix1}
Suppose $a \in G_h^0$ satisfies $| a(z;h) | \geq C > 0$ for all $z \in \rr {2d}$
such that $h |z| \geq R>0$ and all $h \in (0,1]$,
and let $U \subseteq \rr {2d}$ be a relatively compact neighborhood such that
$\overline{U} \cap \overline{B_R} = \emptyset$.
Then there exists $c \in G_h^0$ such that for any $b \in C_c^\infty(U)$
$$
b_h \wpr c \wpr a - b_h \in \bigcap_{k \in \no} h^k G_h^{-k}.
$$
\end{lem}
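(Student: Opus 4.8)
The plan is to construct the parametrix $c$ by the standard Neumann-series / iterative improvement argument, adapted to the $h$-Shubin calculus developed above, using the asymptotic summation Lemma \ref{asymptotic2} to produce an honest symbol from the formal series. The key point is that since $b \in C_c^\infty(U)$ and $\overline{U} \cap \overline{B_R} = \emptyset$, the support of $b_h(\cdot/h;h) = b$ stays in a fixed compact set bounded away from $B_R$, so wherever $b_h$ or its derivatives are nonzero, the lower bound $|a(z;h)| \geq C$ is available. Thus $a$ is elliptic precisely on the region that matters for the product $b_h \wpr c \wpr a$.

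First I would set $c_0(z;h) = \chi(z/h) \, a(z;h)^{-1}$, where $\chi \in C_c^\infty(\rr {2d})$ is chosen with $\chi = 1$ on a neighborhood of $\overline{U}$ and $\supp \chi \cap \overline{B_R} = \emptyset$; the lower bound on $a$ on $h|z| \geq R$ together with the symbol estimates \eqref{shubinclassh} shows $c_0 \in G_h^0$ (one differentiates $a^{-1}$ via the quotient rule and uses $|a| \geq C$ on the support). By Theorem \ref{calculuscomp0} the leading term of $c_0 \wpr a$ is $\chi(z/h)$, which equals $1$ on $\supp(b_h(\cdot/h;h))$, so $b_h \wpr c_0 \wpr a - b_h \in h^2 G_h^{-2}$ — more precisely, writing $c_0 \wpr a = \chi(\cdot/h) + h^2 r_1$ with $r_1 \in G_h^{-2}$ supported (in the $1/h$-dilated sense) in $\supp \chi$, we get $b_h \wpr (c_0 \wpr a) - b_h = h^2 \, b_h \wpr r_1 \in h^2 G_h^{-2}$. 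Then I would iterate: having $b_h \wpr c_j \wpr a - b_h \in h^{2(j+1)} G_h^{-2(j+1)}$, correct by $c_{j+1} = c_j - h^{2(j+1)} c_0 \wpr (\text{the remainder symbol normalized})$, each correction term lying in $h^{2(j+1)} G_h^{-2(j+1)}$ and having the right support property to invoke Lemma \ref{asymptotic2}. Summing the series $c \sim \sum_j (\text{corrections})$ via Lemma \ref{asymptotic2} gives $c \in G_h^0$ with $b_h \wpr c \wpr a - b_h \in \bigcap_{k} h^k G_h^{-k}$.

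The main obstacle I anticipate is bookkeeping the support conditions through the Weyl products so that Lemma \ref{asymptotic2} actually applies: that lemma requires each summand $a_j$ to satisfy $\supp(a_j(\cdot/h;h)) \cap B_\ep = \emptyset$ for a fixed $\ep$, so I must verify that every iterate $c_j$ and every remainder remains supported (after $1/h$-rescaling) in a fixed compact set avoiding a fixed ball around the origin. This follows because $c_0$ carries the cutoff $\chi(\cdot/h)$, because the Weyl product of symbols that are $1/h$-dilated-supported in fixed compacts is again (up to $\sim 0$ errors) supported near the intersection of those compacts — one needs here that the Weyl product is, modulo $\bigcap_k h^k G_h^{-k}$, "pseudolocal" in the $1/h$-rescaled picture, which is a consequence of the asymptotic expansion \eqref{calculuscomp1} being a local expression $(a \otimes b)_{h^{-1}}|_{h(z,z)}$ together with a nonstationary-phase estimate for the remainder away from the diagonal support. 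A secondary technical point is that $a$ itself need not be compactly supported after rescaling, but only $c_0 \wpr a$ enters, and $c_0$ provides the compact localization; one should check that $c_0 \wpr a$ differs from its asymptotic expansion (which is $1/h$-dilated-supported in $\supp \chi$) by an element of $\bigcap_k h^k G_h^{-k}$, which can be absorbed at the end. Once these support propagation facts are in place, the algebra of the Neumann iteration is routine.
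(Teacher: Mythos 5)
Your proposal follows the same basic route as the paper: an elliptic Neumann-series parametrix in the $G_h$ calculus, asymptotically summed via Lemma \ref{asymptotic2}, with Theorem \ref{calculuscomp0} used both for the expansion of the Weyl products and for the disjoint-support (``pseudolocality'') step. The only structural difference is where you place the cutoff: you localize the inverse near $U$ with a compactly supported $\chi$ equal to $1$ on a neighborhood of $\overline{U}$, whereas the paper excises the origin, working with $(1-\chi_h)/a$ where $\chi\equiv 1$ on $B_{R'}$, $R'>R$, and $\supp(\chi)\cap\overline{U}=\emptyset$. The paper's choice buys a globally small error $d=1-\chi_h-((1-\chi_h)/a)\wpr a\in h^2G_h^{-2}$, so the iteration $c_j=d\wpr\cdots\wpr d\wpr(1-\chi_h)/a$ runs without any reference to $b$, and the support issues are dealt with exactly once at the end ($\supp(b_h)\cap\supp(\chi_h)=\emptyset$ kills all terms containing $\chi_h$). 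Your choice instead produces, at every stage, error terms living in the collar where $0<\chi<1$; these never become globally small and can only be discarded after composing with $b_h$ on the left, again by disjoint supports. Note also that no additional nonstationary-phase estimate is needed for your ``pseudolocality'' step: if two symbols have $1/h$-rescaled supports at positive distance, all terms of the expansion \eqref{calculuscomp1} vanish, so Theorem \ref{calculuscomp0} already places their Weyl product in $\bigcap_{k\in\no}h^kG_h^{-k}$.

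Two points need fixing. First, a dilation slip: with the paper's convention $a_h(z)=a(hz)$ and the hypothesis of Lemma \ref{asymptotic2} phrased for $\supp(a_j(\cdot/h;h))$, your cutoff must enter as $\chi(hz)=\chi_h(z)$, not $\chi(z/h)$; as written, $\chi(z/h)$ lives on the scale $|z|\sim h$ while $b_h$ lives on the scale $|z|\sim 1/h$, so the leading term of $c_0\wpr a$ would not equal $1$ on $\supp(b_h)$ and the rescaled support condition for Lemma \ref{asymptotic2} would fail. Second, and more substantively, the lemma demands a single $c$ valid for \emph{every} $b\in C_c^\infty(U)$, but your iteration is phrased with the induction hypothesis $b_h\wpr c_j\wpr a-b_h\in h^{2(j+1)}G_h^{-2(j+1)}$ and corrections built from that ($b$-dependent) remainder. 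You must organize the Neumann iteration on symbols alone, e.g.\ on $e=\chi_h-c_0\wpr a\in h^2G_h^{-2}$ (or, as the paper does, on $d$ above), cut off the summands so that Lemma \ref{asymptotic2} applies, and only then compose with $b_h$, using $\chi\equiv1$ near $\overline{U}$ together with the disjoint-support consequence of Theorem \ref{calculuscomp0} to absorb both the collar errors and the difference $b_h\wpr\chi_h-b_h$ (which is not zero, only an element of $\bigcap_{k\in\no}h^kG_h^{-k}$) into negligible terms. With these adjustments your argument closes and is essentially the paper's proof with a dual placement of the cutoff.
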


\begin{proof}
Let $\chi \in C_c^\infty(\rr {2d})$ satisfy $0 \leq \chi \leq 1$, $\supp(\chi) \cap \overline{U} = \emptyset$ and $\chi(z)=1$ when $z \in B_{R'}$ for some $R'>R$.
Using the lower bound on $|a(z;h)|$, it can be verified by induction that $(1-\chi_h)/a \in G_h^0$.
By Theorem \ref{calculuscomp0} we have
$$
d = 1 -\chi_h - ((1-\chi_h)/a) \wpr a \in h^2 G_h^{-2}.
$$
Define for $j \in \no$
\begin{align*}
c_j^w(x,D) & = d^w(x,D)^j ((1-\chi_h)/a)^w(x,D) \\
& = ( \underbrace{d \wpr \cdots \wpr d}_{\mbox{\tiny $j$ factors}} \wpr (1-\chi_h)/a )^w(x,D).
\end{align*}
Again by Theorem \ref{calculuscomp0} we have $c_j \in h^{2 j} G_h^{-2j}$.
Set
\begin{equation*}
\wt c_j(z;h) = c_j(z;h)(1-\chi(hz) ), \quad j \in \no.
\end{equation*}
Then $\wt c_j(z/h;h)=0$ for $|z| \leq R$, $h \in (0,1]$ and $j \in \no$, and $\wt c_j \in h^{2 j} G_h^{-2j}$.
According to Lemma \ref{asymptotic2} there exists $c \in G_h^0$ such that
\begin{equation*}
c \sim \sum_0^\infty \wt c_j.
\end{equation*}

Using
$$
((1-\chi_h)/a)^w(x,D) a^w(x,D) = I - d^w(x,D) - \chi_h^w(x,D)
$$
we have
for any integer $k \geq 1$
\begin{align*}
& \sum_{j=0}^{k-1} \wt c_j^w(x,D) a^w(x,D) \\
& = \sum_{j=0}^{k-1} c_j^w(x,D) a^w(x,D) - (c_j \chi_h)^w(x,D) a^w(x,D) \\
& = \sum_{j=0}^{k-1} d^w(x,D)^j (I-d^w(x,D) - \chi_h^w(x,D) ) - (c_j \chi_h)^w(x,D) a^w(x,D) \\
& = I-d^w(x,D)^k - \sum_{j=0}^{k-1} d^w(x,D)^j  \chi_h^w(x,D) - \sum_{j=0}^{k-1}(c_j \chi_h)^w(x,D) a^w(x,D).
\end{align*}
Hence
\begin{equation}\label{regularizing0}
\begin{aligned}
& b_h^w(x,D) c^w(x,D) a^w(x,D) - b_h^w(x,D) \\
& =b_h^w(x,D)   \left(  c^w(x,D) - \sum_{j=0}^{k-1} \wt c_j^w(x,D) \right)a^w(x,D) - b_h^w(x,D) d^w(x,D)^k \\
& \quad - \sum_{j=0}^{k-1} b_h^w(x,D) d^w(x,D)^j  \chi_h^w(x,D) \\
& \quad - \sum_{j=0}^{k-1} b_h^w(x,D)  (c_j \chi_h)^w(x,D) a^w(x,D).
\end{aligned}
\end{equation}

By Definition \ref{asymptoticexpansion1}
we have $c - \sum_{j=0}^{k-1} \wt c_j \in h^{2k} G_h^{-2k}$, and thus
by Theorem \ref{calculuscomp0}
\begin{equation}\label{regularizing1}
b_h \wpr ( c - \sum_{j=0}^{k-1} \wt c_j ) \wpr a \in h^{2k} G_h^{-2k}.
\end{equation}
Moreover, by Theorem \ref{calculuscomp0} we have $d \wpr \cdots \wpr d \in h^{2k} G_h^{-2k}$ ($k$ factors), which gives
\begin{equation}\label{regularizing2}
b_h \wpr  \underbrace{d \wpr \cdots \wpr d}_{\mbox{\tiny $k$ factors}} \in h^{2k} G_h^{-2k}.
\end{equation}

Finally, due to
$$
\supp (b_h) \cap \supp(\chi_h) = \emptyset,
$$
again by Theorem \ref{calculuscomp0} we have for any $0 \leq j \leq k-1$
\begin{equation}\label{regularizing3}
b_h \wpr  \underbrace{d \wpr \cdots \wpr d}_{\mbox{\tiny $j$ factors}}  \wpr \chi_h \in h^{2k} G_h^{-2k}, \qquad
b_h \wpr (c_j \chi_h) \wpr a \in h^{2k} G_h^{-2k}.
\end{equation}
It now follows from \eqref{regularizing1}, \eqref{regularizing2}, \eqref{regularizing3} and \eqref{regularizing0} that we have
\begin{equation*}
b_h \wpr c \wpr  a - b_h \in h^{2k} G_h^{-2k}
\end{equation*}
for $k \in \no$ arbitrary.
\end{proof}

We are now in a position to prove that the test function symbol in the definition of the homogeneous wave front set may be chosen freely as long as the support is sufficiently small.

\begin{thm}\label{HWFinvariance}
Let $u \in \cS'(\rr d)$ and suppose $a \in C_c^\infty(\rr {2d})$,
$z_0 \in \rr {2d} \setminus \{0 \}$, $a(z_0)=1$ and
\begin{equation}\label{hwfassumption}
\| a_h^w(x,D) u \|_{L^2} = \cO(h^\infty), \quad h \in (0,1].
\end{equation}
Then there exists a relatively compact neighborhood $U$ of $z_0$, such that for any $b \in C_c^\infty(\rr {2d})$ with $\supp(b) \subseteq U$ we have
\begin{equation*}
\| b_h^w(x,D) u \|_{L^2} = \cO(h^\infty), \quad h \in (0,1].
\end{equation*}
\end{thm}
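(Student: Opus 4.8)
The plan is to reduce the statement to the parametrix construction of Lemma~\ref{parametrix1}. That lemma cannot be applied to $a_h(z)=a(hz)$ directly: since $a$ is compactly supported, $a_h$ vanishes for $h|z|$ large and hence fails the required lower bound. I would therefore first replace $a$ by a modification $\tilde a\in G^0$ that is globally bounded below and still agrees with $a$ near $z_0$, apply the parametrix to $\tilde a_h$, and afterwards absorb the discrepancy $\tilde a-a$ — which is supported far from $z_0$ — into a term that vanishes to infinite order.

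Concretely, since $a(z_0)=1$ and $z_0\neq 0$ I would choose a bounded open neighbourhood $V\ni z_0$ with $0\notin\overline V$ and $|a-1|\le 1/2$ on $\overline V$, then $\chi\in C_c^\infty(\rr{2d})$ with $0\le\chi\le 1$, $\chi\equiv 1$ on a smaller neighbourhood $W$ of $z_0$ and $\supp\chi\subseteq V$, and set
\[
\tilde a:=1-\chi(1-a)\in G^0,\qquad g:=\tilde a-a=(1-a)(1-\chi)\in G^0 .
\]
Then $\tilde a\equiv a$ on $W$, $g$ vanishes on $W$, and $|\tilde a(z)|\ge 1-\chi(z)\,|1-a(z)|\ge 1/2$ for every $z$; hence $\tilde a_h\in G_h^0$ and $|\tilde a_h(z)|\ge 1/2$ for all $z$ and $h\in(0,1]$. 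I would fix $R\in(0,\operatorname{dist}(0,\overline V))$, so that $\overline{B_R}\cap\overline V=\emptyset$, and take a relatively compact open neighbourhood $U\ni z_0$ with $\overline U\subseteq W$; then $\overline U\cap\overline{B_R}=\emptyset$, $\tilde a\equiv a$ on $\overline U$, and $\delta_0:=\operatorname{dist}(\overline U,\rr{2d}\setminus W)>0$. Lemma~\ref{parametrix1}, applied with $\tilde a_h$ in the role of the lower-bounded symbol, then produces $c\in G_h^0$ such that
\[
b_h\wpr c\wpr\tilde a_h-b_h\in\bigcap_{k\in\no}h^kG_h^{-k}\qquad\text{for every }b\in C_c^\infty(U),
\]
and this $U$ is the neighbourhood asserted in the statement.

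Now fix $b\in C_c^\infty(\rr{2d})$ with $\supp b\subseteq U$ and split $\tilde a_h=a_h+g_h$ in $G_h^0$, so that
\[
b_h=b_h\wpr c\wpr a_h+b_h\wpr c\wpr g_h+r,\qquad r\in\bigcap_{k}h^kG_h^{-k}.
\]
If $z\in\supp b_h$ and $z'\in\supp g_h$ then $hz\in\supp b\subseteq\overline U$ and $hz'\in\supp g\subseteq\rr{2d}\setminus W$, so $|z-z'|\ge\delta_0/h$; thus $\supp b_h$ and $\supp g_h$ are separated by order $1/h$, and by the same reasoning that disposes of the terms involving $\chi_h$ in the proof of Lemma~\ref{parametrix1} (via Theorem~\ref{calculuscomp0}) one gets $b_h\wpr c\wpr g_h\in\bigcap_k h^kG_h^{-k}$ as well. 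Applying the operators to $u$, Proposition~\ref{regularize1} gives $\|r^w(x,D)u\|_{L^2}=\cO(h^\infty)$ and $\|(b_h\wpr c\wpr g_h)^w(x,D)u\|_{L^2}=\cO(h^\infty)$. Moreover $b_h\wpr c\in G_h^0$ by Theorem~\ref{calculuscomp0}, and since $|\pdd z\alpha(b_h\wpr c)(z;h)|\le C_\alpha h^{|\alpha|}\eabs{hz}^{-|\alpha|}\le C_\alpha$ uniformly in $z$ and $h$, the Calder\'on--Vaillancourt theorem (cf.\ \cite{Folland1}) yields $\|(b_h\wpr c)^w(x,D)\|_{L^2\to L^2}\le C$ uniformly in $h$; together with \eqref{hwfassumption} this gives
\begin{align*}
\| b_h^w(x,D)\, c^w(x,D)\, a_h^w(x,D) u \|_{L^2}
& = \| (b_h\wpr c)^w(x,D)\, a_h^w(x,D) u \|_{L^2} \\
& \lesssim \| a_h^w(x,D) u \|_{L^2} = \cO(h^\infty) .
\end{align*}
Summing the three contributions shows $\|b_h^w(x,D)u\|_{L^2}=\cO(h^\infty)$.

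I expect the main obstacle to be the claim $b_h\wpr c\wpr g_h\in\bigcap_k h^kG_h^{-k}$: because $g_h$ is not compactly supported and $b_h\wpr c$ is not localized near $\supp b_h$, it does not follow merely from the vanishing of the diagonal terms in the asymptotic expansion, but from the quantitative decay that separation of supports by order $1/h$ produces in the $h$-Weyl calculus — exactly the mechanism already invoked for the $\chi_h$-terms inside Lemma~\ref{parametrix1}, so in practice it is a matter of re-running that argument in the present configuration. The symbol-class bookkeeping and the uniform Calder\'on--Vaillancourt bound are routine by comparison.
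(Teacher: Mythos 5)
Your proposal is correct and follows essentially the same route as the paper's own proof: modify $a$ to a globally lower-bounded symbol $\tilde a\in G^0$ agreeing with $a$ near $z_0$ (the paper uses $\tilde a=a+2\|a\|_{L^\infty}(1-\chi)$ instead of your $1-\chi(1-a)$, a cosmetic difference), apply Lemma~\ref{parametrix1} to $\tilde a_h$, and dispose of the three resulting terms via Calder\'on--Vaillancourt with hypothesis \eqref{hwfassumption}, Proposition~\ref{regularize1}, and the disjoint-support argument through Theorem~\ref{calculuscomp0}. The step you flag as the main obstacle, $b_h\wpr c\wpr(\tilde a-a)_h\in\bigcap_k h^kG_h^{-k}$, is handled in the paper at the same level of detail (disjointness of $\supp(b)$ and $\supp(\tilde a-a)$ plus Theorem~\ref{calculuscomp0}), so no genuinely different idea is involved.
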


\begin{proof}
There exists two relatively compact open neighborhoods $U , U' \subseteq \rr {2d}$ such that $z_0 \in U \subseteq \overline{U} \subseteq U'$,
$0 \notin \overline{U'}$, and
\begin{align*}
|a(z)-1| \leq 1/3, & \quad z \in U, \\
|a(z)-1| \leq 2/3, & \quad z \in U'.
\end{align*}
Let $\chi \in C_c^\infty(\rr {2d})$ satisfy $0 \leq \chi \leq 1$, $\supp(\chi) \subseteq U'$ and $\chi(z)=1$ for $z \in U$, and set
\begin{equation*}
\wt a(z) = a(z) + 2 \| a \|_{L^\infty}(1-\chi(z)) \in G^0.
\end{equation*}
Then $\wt a(z) = a(z)$ when $z \in U$.
For $z \in \rr {2d} \setminus U'$ we have
\begin{equation*}
\left| \wt a(z) \right| = \left| a(z) + 2 \| a \|_{L^\infty} \right|
\geq 2 \| a \|_{L^\infty} - |a(z)| \geq \| a \|_{L^\infty} \geq 1,
\end{equation*}
and for $z \in U'$ we have
\begin{align*}
\left| \wt a(z) \right| & = \left| a(z) -1 + 2 \| a \|_{L^\infty}(1-\chi(z)) + 1 \right| \\
& \geq 2 \| a \|_{L^\infty}(1-\chi(z)) + 1 - |a(z)-1| \\
& \geq 1-2/3=1/3.
\end{align*}
By Lemma \ref{parametrix1} there exists $c \in G_h^0$ and $r \in \bigcap_{k \in \no} h^k G_h^{-k}$ such that
\begin{equation}\label{parametrix2}
\begin{aligned}
b_h^w(x,D) u & = b_h^w(x,D) \, c^w(x,D) \, \wt a_h^w(x,D) u + r^w(x,D)u \\
& = b_h^w(x,D) \, c^w(x,D) \, a_h^w(x,D) u \\
& \quad + b_h^w(x,D) \, c^w(x,D) \, (\wt a-a)_h^w(x,D) u + r^w(x,D) u.
\end{aligned}
\end{equation}

The assumption \eqref{hwfassumption} and the Calder\'on--Vaillancourt theorem now give
\begin{equation*}
\| b_h^w(x,D) c^w(x,D) a_h^w(x,D) u \|_{L^2} = \cO(h^\infty),
\end{equation*}
and, by Proposition \ref{regularize1}, $\| r^w(x,D) u \|_{L^2} = \cO(h^\infty)$.
Due to
$$
\supp (b) \cap \supp(\wt a-a) = \emptyset
$$
it finally follows from Theorem \ref{calculuscomp0} that we have
\begin{equation*}
b_h \wpr c \wpr (\wt a-a)_h \in \bigcap_{k \in \no} h^k G_h^{-k},
\end{equation*}
and therefore by Proposition \ref{regularize1}
\begin{equation*}
\| b_h^w(x,D) c^w(x,D) (\wt a-a)_h^w(x,D) u \|_{L^2} = \cO(h^\infty).
\end{equation*}
From \eqref{parametrix2} we may thus conclude $\| b_h^w(x,D) u \|_{L^2} = \cO(h^\infty)$.
\end{proof}

Finally we deduce a result that is needed in the proof of Theorem \ref{WFequality}.

\begin{prop}\label{kohnnirenberg}
Let $u \in \cS'(\rr d)$, $b \in C_c^\infty(\rr {2d})$,
$z_0 \in \rr {2d} \setminus \{0 \}$, $b(z_0)=1$,
and suppose
\begin{equation}\label{weylassump1}
\| b_h^w (x,D) u \|_{L^2} = \cO(h^\infty), \quad h \in (0,1].
\end{equation}
Then there exists $a \in C_c^\infty(\rr {2d})$ supported in a neighborhood of $z_0$ and equal to one in a smaller neighborhood of $z_0$, such that
\begin{equation*}
\| a_h(x,D) u \|_{L^2} = \cO(h^\infty), \quad h \in (0,1].
\end{equation*}
\end{prop}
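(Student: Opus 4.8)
The plan is to pass from the Kohn--Nirenberg quantization $a_h(x,D)$ to a Weyl quantization via Theorem~\ref{KNweyl}, to absorb all but finitely many terms of the resulting asymptotic expansion into the hypothesis using Theorem~\ref{HWFinvariance}, and to estimate the single remaining symbol tail directly. First I would apply Theorem~\ref{HWFinvariance} with the symbol there taken to be $b$: from \eqref{weylassump1} we obtain a relatively compact neighborhood $U$ of $z_0$, with $0\notin\overline U$, such that $\|(c')_h^w(x,D)u\|_{L^2}=\cO(h^\infty)$ for every $c'\in C_c^\infty(\rr{2d})$ with $\supp(c')\subseteq U$. Then I would fix once and for all $a\in C_c^\infty(\rr{2d})$ with $\supp(a)\subseteq U$ and $a\equiv 1$ on a neighborhood of $z_0$; this is the symbol claimed in the statement. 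By Theorem~\ref{KNweyl} there is $c\in G_h^0$ with $a_h(x,D)=c^w(x,D)$ and
\[
c\sim\sum_{j\ge 0}h^{2j}(a^{(j)})_h,\qquad a^{(j)}:=\frac{\left(-\frac{i}{2}D_x\cdot D_\xi\right)^j}{j!}\,a\in C_c^\infty(\rr{2d}),
\]
where $(a^{(j)})_h(z)=a^{(j)}(hz)$, so that $\supp(a^{(j)})\subseteq\supp(a)\subseteq U$ and $(a^{(j)})_h\in G_h^{-2j}$. For each integer $N\ge 1$, Definition~\ref{asymptoticexpansion1} gives $R_N:=c-\sum_{j=0}^{N-1}h^{2j}(a^{(j)})_h\in h^{2N}G_h^{-2N}$, hence
\[
\|a_h(x,D)u\|_{L^2}\le\sum_{j=0}^{N-1}h^{2j}\left\|(a^{(j)})_h^w(x,D)u\right\|_{L^2}+\left\|R_N^w(x,D)u\right\|_{L^2},
\]
and the finite sum is $\cO(h^\infty)$ by the choice of $U$, applied to each $c'=a^{(j)}$.

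The only delicate point is to bound $\|R_N^w(x,D)u\|_{L^2}$. Since $R_N$ lies in the single class $h^{2N}G_h^{-2N}$ rather than in the intersection $\bigcap_k h^kG_h^{-k}$ handled directly by Proposition~\ref{regularize1}, one has to quantify the $h$-gain of $R_N$ against a fixed Sobolev--Shubin regularity of $u$. Writing $R_N=h^{2N}t$ with $t\in G_h^{-2N}$ and using $\langle hz\rangle^2-h^2\langle z\rangle^2=1-h^2\ge 0$ for $0<h\le 1$, so that $(h\langle z\rangle)^{N+|\alpha|}\le\langle hz\rangle^{N+|\alpha|}\le\langle hz\rangle^{2N+|\alpha|}$, one gets
\[
\left|\partial^\alpha R_N(z;h)\right|\le C_\alpha\,h^{2N+|\alpha|}\langle hz\rangle^{-2N-|\alpha|}\le C_\alpha\,h^N\langle z\rangle^{-N-|\alpha|},\qquad z\in\rr{2d},\ h\in(0,1],
\]
that is, $R_N(\cdot\,;h)=h^N q_N(\cdot\,;h)$ with $q_N(\cdot\,;h)\in G^{-N}$, uniformly in $h$. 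By \cite[Corollary~25.2]{Shubin1} there is $s\in\ro$ with $u\in Q^s(\rr d)$; choosing $N\ge -s$ and arguing as in the proof of Proposition~\ref{regularize1} via \cite[Proposition~25.4]{Shubin1}, the operators $q_N(\cdot\,;h)^w(x,D)$ map $Q^s(\rr d)$ into $L^2(\rr d)$ boundedly and uniformly in $h$, whence $\|R_N^w(x,D)u\|_{L^2}\le C_N h^N$.

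Combining the two bounds, for every $M\ge 0$ one may take $N\ge\max(M,-s)$ and conclude $\|a_h(x,D)u\|_{L^2}=\cO(h^M)$; as $M$ is arbitrary, $\|a_h(x,D)u\|_{L^2}=\cO(h^\infty)$, which proves the statement.
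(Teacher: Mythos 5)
Your proposal is correct and follows essentially the same route as the paper's proof: invoke Theorem~\ref{HWFinvariance} to get the neighborhood $U$, pick $a\in C_c^\infty(U)$ equal to one near $z_0$, convert $a_h(x,D)$ to a Weyl operator via Theorem~\ref{KNweyl}, handle each expansion term by the hypothesis (support in $U$), and place the remainder in $h^NG^{-N}$ so that the Sobolev--Shubin argument of Proposition~\ref{regularize1} gives $\cO(h^N)$. The only difference is cosmetic: you derive $R_N\in h^NG^{-N}$ from $R_N\in h^{2N}G_h^{-2N}$ by the single inequality $h\eabs{z}\le\eabs{hz}$, where the paper splits into $|z|\ge 1$ and $|z|<1$.
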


\begin{proof}
By Theorem \ref{HWFinvariance} there exists a relatively compact neighborhood $U \subseteq \rr {2d}$ containing $z_0$ such that \eqref{weylassump1} holds for any test function symbol supported in $U$. Let $a \in C_c^\infty(U)$ equal one in a neighborhood of $z_0$
and define an $h$-dependent distribution $c \in \cS'(\rr {2d})$ by $c^w(x,D)=a_h(x,D)$.
By Theorem \ref{KNweyl} we have $c \in G_h^{-\infty}$ and
\begin{equation*}
c (x,\xi) \sim \sum_{j=0}^\infty h^{2j} \frac{\left( -\frac{i}{2} D_x \cdot D_\xi \right)^j}{j!} \, a\big|_{(hx,h\xi)}.
\end{equation*}

Denoting
\begin{equation*}
a_j = (j!)^{-1} \left( - \frac{i}{2} D_x \cdot D_\xi \right)^j a
\end{equation*}
we thus have for any integer $N \geq 1$
\begin{equation*}
c_N := c - \sum_{j=0}^{N-1} h^{2j} (a_j)_h \in h^{2N} G_h^{-\infty}.
\end{equation*}
Note that $\supp(a_j) \subseteq U$ and therefore
\begin{equation}\label{ajhwf}
\| (a_j)_h^w (x,D) u \|_{L^2} = \cO(h^\infty), \quad h \in (0,1].
\end{equation}

For $\alpha \in \nn {2d}$, the symbol $c_N$ differentiated obeys the estimate, for $|z| \geq 1$,
\begin{align*}
\eabs{z}^{N+|\alpha|} |\pdd z \alpha c_N(z) |
& \leq C_{N,\alpha} h^{2N+|\alpha|} |z|^{N+|\alpha|} \eabs{hz}^{-(2N+|\alpha|)-|\alpha|} \\
& \leq C_{N,\alpha} h^{2N+|\alpha|} |z|^{N+|\alpha|} (h|z|)^{-N-|\alpha|} \\
& \leq C_{N,\alpha} h^N,
\end{align*}
and for $|z| < 1$ we have
\begin{align*}
\eabs{z}^{N+|\alpha|} |\pdd z \alpha c_N(z) |
\leq C_{N,\alpha} h^{2N+|\alpha|} \eabs{hz}^{-|\alpha|}
\leq C_{N,\alpha} h^{N}.
\end{align*}
It follows that $c_N \in h^N G^{-N}$.
As in the proof of Proposition \ref{regularize1} it follows that $\| c_N^w(x,D) u \|_{L^2} \lesssim h^N$ provided $N$ is sufficiently large.
Combining with \eqref{ajhwf} we obtain finally for sufficiently large, arbitrary $N \geq 1$
\begin{align*}
\| a_h(x,D) u
\|_{L^2}
& = \| c^w(x,D) u \|_{L^2} \\
& \leq \| c_N^w(x,D) u \|_{L^2} + \sum_{j=0}^{N-1} h^{2j} \| (a_j)_h^w (x,D) u \|_{L^2} \\
& \lesssim h^N.
\end{align*}
\end{proof}

\section*{acknowledgements}

We are grateful to Profs.~D.~Bahns, S.~Coriasco, L.~Rodino, J.~Toft and I.~Witt for valuable advice and constructive criticism. To Prof.~M.~Sugimoto we are grateful for making us aware of the notion of homogeneous wave front set, thus initiating the project.

This work was supported by the German Research Foundation (DFG) through the Institutional Strategy of the University of G\"ottingen, in particular through the Graduiertenkolleg 1493 and the Courant Research Center ``Higher Order Structures in Mathematics''. The first author is also grateful for the support received by the Studienstiftung des Deutschen Volkes, the German Academic Exchange Service (DAAD) and institutional support by the University of Hannover.


\end{document}